\begin{document}
\newcommand {\emptycomment}[1]{} 

\baselineskip=15pt
\newcommand{\nc}{\newcommand}
\newcommand{\delete}[1]{}
\nc{\mfootnote}[1]{\footnote{#1}} 
\nc{\todo}[1]{\tred{To do:} #1}

\nc{\mlabel}[1]{\label{#1}}  
\nc{\mcite}[1]{\cite{#1}}  
\nc{\mref}[1]{\ref{#1}}  
\nc{\meqref}[1]{\eqref{#1}} 
\nc{\mbibitem}[1]{\bibitem{#1}} 

\delete{
\nc{\mlabel}[1]{\label{#1}  
{\hfill \hspace{1cm}{\bf{{\ }\hfill(#1)}}}}
\nc{\mcite}[1]{\cite{#1}{{\bf{{\ }(#1)}}}}  
\nc{\mref}[1]{\ref{#1}{{\bf{{\ }(#1)}}}}  
\nc{\meqref}[1]{\eqref{#1}{{\bf{{\ }(#1)}}}} 
\nc{\mbibitem}[1]{\bibitem[\bf #1]{#1}} 
}

\newcommand {\comment}[1]{{\marginpar{*}\scriptsize\textbf{Comments:} #1}}
\nc{\mrm}[1]{{\rm #1}}
\nc{\id}{\mrm{id}}  \nc{\Id}{\mrm{Id}}

\def\a{\alpha}
\def\b{\beta}
\def\bd{\boxdot}
\def\bbf{\bar{f}}
\def\bF{\bar{F}}
\def\bbF{\bar{\bar{F}}}
\def\bbbf{\bar{\bar{f}}}
\def\bg{\bar{g}}
\def\bG{\bar{G}}
\def\bbG{\bar{\bar{G}}}
\def\bbg{\bar{\bar{g}}}
\def\bT{\bar{T}}
\def\bt{\bar{t}}
\def\bbT{\bar{\bar{T}}}
\def\bbt{\bar{\bar{t}}}
\def\bR{\bar{R}}
\def\br{\bar{r}}
\def\bbR{\bar{\bar{R}}}
\def\bbr{\bar{\bar{r}}}
\def\bu{\bar{u}}
\def\bU{\bar{U}}
\def\bbU{\bar{\bar{U}}}
\def\bbu{\bar{\bar{u}}}
\def\bw{\bar{w}}
\def\bW{\bar{W}}
\def\bbW{\bar{\bar{W}}}
\def\bbw{\bar{\bar{w}}}
\def\btl{\blacktriangleright}
\def\btr{\blacktriangleleft}
\def\ci{\circ}
\def\d{\delta}
\def\dd{\diamondsuit}
\def\D{\Delta}
\def\G{\Gamma}
\def\g{\gamma}
\def\k{\kappa}
\def\l{\lambda}
\def\lr{\longrightarrow}
\def\o{\otimes}
\def\om{\omega}
\def\p{\psi}
\def\r{\rho}
\def\ra{\rightarrow}
\def\rh{\rightharpoonup}
\def\lh{\leftharpoonup}
\def\s{\sigma}
\def\st{\star}
\def\ti{\times}
\def\tl{\triangleright}
\def\tr{\triangleleft}
\def\v{\varepsilon}
\def\vp{\varphi}
\def\vth{\vartheta}

\newtheorem{thm}{Theorem}[section]
\newtheorem{lem}[thm]{Lemma}
\newtheorem{cor}[thm]{Corollary}
\newtheorem{pro}[thm]{Proposition}
\theoremstyle{definition}
\newtheorem{defi}[thm]{Definition}
\newtheorem{ex}[thm]{Example}
\newtheorem{rmk}[thm]{Remark}
\newtheorem{pdef}[thm]{Proposition-Definition}
\newtheorem{condition}[thm]{Condition}
\newtheorem{question}[thm]{Question}
\renewcommand{\labelenumi}{{\rm(\alph{enumi})}}
\renewcommand{\theenumi}{\alph{enumi}}

\nc{\ts}[1]{\textcolor{purple}{MTS:#1}}
\nc{\li}[1]{\textcolor{blue}{li:#1}}
\font\cyr=wncyr10

 \title[]{\bf Transposed BiHom-Poisson algebras}

 \author[Ma]{Tianshui Ma\textsuperscript{*}}
 \address{School of Mathematics and Information Science, Henan Normal University, Xinxiang 453007, China}
         \email{matianshui@htu.edu.cn;matianshui@yahoo.com}

 \author[Li]{Bei Li}
 \address{School of Mathematics and Information Science, Henan Normal University, Xinxiang 453007, China}
         \email{libei9717@163.com}

  \thanks{\textsuperscript{*}Corresponding author}

\date{\today}

\begin{abstract} In this paper, we introduce the concept of transposed BiHom-Poisson (abbr. TBP) algebras which can be constructed by the BiHom-Novikov-Poisson algebras. Several useful identities for TBP algebras are provided. We also prove that the tensor product of two (T)BP algebras are closed. The notions of BP 3-Lie algebras and TBP 3-Lie algebras are presented and TBP algebras can induce TBP 3-Lie algebras by two approaches.  Finally, we give some examples for the TBP algebras of dimension 2.

\end{abstract}

 \keywords{transposed BiHom-Poisson algebras;BiHom-Novikov-Poisson algebras;transposed BiHom-Poisson 3-Lie algebras}

\subjclass[2020]{
17B61,
17D30,
17A30,
17B63} 

 \maketitle

\tableofcontents

\numberwithin{equation}{section}
\allowdisplaybreaks



\section{Introduction and Preliminaries}
 Poisson algebras appear naturally in Hamiltonian mechanics, and play a central role in the study of Poisson geometry \cite{L}, noncommutative algebra \cite{V} and deformation quantization \cite{H}. Transposed Poisson algebras were presented in \cite{BBGW} by exchanging the operations $\cdot$ and $[,]$ in the compatible condition of Poisson algebras, and at the same time a factor 2 appears on the left-hand side. It was also studied in \cite{FKL,LS,YH} recently. A 3-Lie algebra is a vector space $A$ endowed with a ternary skew-symmetric operation satisfying the ternary Jacobi identity \cite{F,BCLM,BW}. The transposed Poisson algebra with a derivation can produce 3-Lie algebras \cite{BBGW}.

 Roughly speaking, a BiHom-associative algebra (or Lie algebra) is an algebra (or Lie algebra) such that the associativity (or Jacobi condition) is twisted by two (commuting) endomorphisms, for details see \cite{GMMP}, which can be seen as an extension of Hom-type algebra \cite{HLS} arising in quasi-deformations of Lie algebras of vector fields. Now there are so many research related to BiHom-type algebras, see refs\cite{BMN,GK,GZW,KMS,LS,LC,LCC,LMMP6,LMMP3,LMMP2,MLi,MLiY,MM,MY,MYZZ,TC}. In \cite{LMMP2}, the authors introduced the notion of BiHom-Poisson algebras and gave a necessary and sufficient condition under which BiHom-Novikov-Poisson algebras (which are twisted generalizations of Novikov-Poisson algebras \cite{X} and Hom-Novikov-Poisson algebras \cite{Yau11}) give rise to BiHom-Poisson algebras. Following \cite{KMS}, one can get 3-BiHom-Lie algebras which can be constructed from 3-Bihom-Lie algebras and 3-totally Bihom-associative algebras \cite{LC}. In \cite{BMN}, the authors studied the relationships between the BiHom-Lie superalgebras and its induced 3-BiHom-Lie superalgebras. In \cite{LS}, the authors introduced the notion of transposed Hom-Poisson algebra and studied the bimodule and matched pair of transposed Hom-Poisson algebras.

 It is very natural and interesting to consider the BiHom-deformation of transposed Poisson algebras and also study its properties, such as, the relations with BiHom-Novikov-Poisson algebras, BP algebras, 3-BiHom-Lie algebras and TBP 3-Lie algebra, the tensor product of two TBP algebras, etc. In this paper, we will investigate the questions above and beyond.

 The layout of the paper is as follows. In Section \ref{se:tbp}, we introduce the notion of TBP algebras (Definition \ref{de:5.1}) which extends transposed Poisson algebra defined in \cite{BBGW}. We note here when $\a=\b$, it is still different from the transposed Hom-Poisson algebra introduced in \cite{LS}. A TBP algebra can be constructed by a commutative associative algebra with a derivation (Proposition \ref{pro:5.3}). Several useful and interesting identities (Theorem \ref{thm:5.4}, Lemma \ref{lem:5.4-1}) are provided which will be used in Definition \ref{de:7.3}, Theorems \ref{thm:5.6}, \ref{thm:7.8}, \ref{thm:7.12}, \ref{thm:7.7} and Propositions \ref{pro:5.7}, \ref{pro:7.14}. We prove that the tensor product of two regular (T)BP algebras is closed (Lemma \ref{lem:5.5}, Theorem \ref{thm:5.6}) and a regular BiHom-Novikov-Poisson algebra can give rise to a TBP algebra (Theorem \ref{thm:6.2}). At the end of this section, we discuss the relations among BiHom-pre-Lie commutative algebras (Definition \ref{de:6.5}), differential BiHom-Novikov-Poisson algebras (Definition \ref{de:6.7}) and BiHom-pre-Lie Poisson algebras (Definition \ref{de:6.8}). In Section \ref{se:tbp3}, we focus on the research of 3-BiHom-Lie algebras and TBP 3-Lie algebras (Definition \ref{de:7.9}). Firstly we present two methods to construct 3-BiHom-Lie algebras from TBP algebras (Theorems \ref{thm:7.8}, \ref{thm:7.12}). We prove that a strong BP 3-Lie algebra can be obtained by a regular strong BP algebra with a derivation (Theorem \ref{thm:7.7}) and a 3-BiHom-Lie algebra can be obtained by a regular strong BP algebra (Proposition \ref{pro:7.14}). Secondly, a TBP 3-Lie algebra can be gotten by a strong BP algebra (Theorem \ref{thm:7.15}). In Section \ref{se:ex}, we give some examples of TBP algebras of dimension 2.

 Throughout this paper, $K$ will be a field of characteristic 0, and all vector spaces, tensor products, and homomorphisms are over $K$. And we denote by $\id_M$ the identity map from $M$ to $M$. First we recall some useful definitions which will be used later.

 \begin{defi}\label{de:4.1}\cite{GMMP,LMMP3} A {\bf BiHom-associative algebra} is a 4-tuple $(L,\cdot,\a,\b)$, where $L$ is a linear space, $\a,\b: L\lr L$ and $\cdot:L\o L\lr L$ are linear maps such that
 \begin{eqnarray}
 &\a \circ \b=\b \circ \a,\ \a(x\cdot y)=\a(x)\cdot \a(y),\ \b(x\cdot y)=\b(x)\cdot \b(y)&\label{eq:1.2}\\
 &\a(x)\cdot(y\cdot z)=(x\cdot y)\cdot \b(z),&\label{eq:4.1}
 \end{eqnarray}
 for all $x,\ y,\ z\in L$. The maps $\a$ and $\b$ (in this order) are the {\bf structure maps of $L$} and Eq.(\ref{eq:4.1}) is the {\bf BiHom-associativity condition}.

 A {\bf morphism $f:(A,\cdot_{A},\alpha_{A},\beta_{A})\longrightarrow (B,\cdot_{B},\alpha_{B},\beta_{B})$ of BiHom-associative algebras} is a linear map $f:A\longrightarrow B$ such that $\alpha_{B}\circ f=f\circ\alpha_{A}$, $\beta_{B}\circ f=f\circ\beta_{A}$ and $f\circ\cdot_{A}=\cdot_{B}\circ(f\otimes f)$.

 A BiHom-associative algebra $(L,\cdot,\a,\b)$ is {\bf BiHom-commutative} if
 \begin{eqnarray}
 &\b(x)\cdot\a(y)=\b(y)\cdot\a(x),\ \forall x,y\in L.&\label{eq:4.2}
 \end{eqnarray}
 \end{defi}

 \begin{rmk}\label{rmk:de:4.1}(1)\cite[Claim 3.7]{GMMP} Let $(L, \mu)$ be an associative algebra, $\a, \b: L\lr L$ two linear maps satisfying Eq.(\mref{eq:1.2}). Then $(L, \mu\ci (\a\o \b), \a, \b)$ is a BiHom-associative algebra.

 (2) Furthermore, if $(L, \mu)$ is commutative, then $(L, \mu\ci (\a\o \b), \a, \b)$ is BiHom-commutative.
 \end{rmk}

 \begin{defi}\label{de:4.4}\cite{GMMP,LMMP6} A {\bf BiHom-Lie algebra} $(L,[\cdot,\cdot],\a,\b)$ is a 4-tuple in which $L$ is a linear space, $\a, \b: L\lr L$ are two commuting linear maps and $[\cdot,\cdot]: L\times L\lr L$ is a bilinear map such that $\a(x\cdot y)=\a(x)\cdot \a(y),\ \b(x\cdot y)=\b(x)\cdot \b(y)$ and
 \begin{eqnarray}
 &[\b(x),\a(y)]=-[\b(y),\a(x)],\ \ (\hbox{BiHom-skew-symmetry})&\label{eq:4.4}\\
 &[[\b(x),\a(y)],\a^{2}(z)]+[[\b(y),\a(z)], \a^{2}(x)]+[[\b(z),\a(x)], \a^{2}(y)]=0\nonumber\\
 &\hbox{i.e.},\circlearrowleft_{x,y,z \in L}[[\b(x),\a(y)],\a^{2}(z)]=0~(\hbox{BiHom-Jacobi condition}),&\label{eq:4.7}
 \end{eqnarray}
 for all $x, y, z\in L$. The maps $\a$ and $\b$ (in this order) are the structure maps of $L$.
 \end{defi}

 \begin{rmk}\label{rmk:de:4.4}\cite[Remark 4.10]{LMMP6} Under the assumption of Definition \ref{de:4.4}. Assume that $\a, \b$ are bijective, the BiHom-Jacobi condition is equivalent to:
 \begin{eqnarray}
 &\circlearrowleft_{x,y,z \in L}[\b^{2}(x),[\b(y),\a(z)]]=0.&\label{eq:4.5}
 \end{eqnarray}
 \end{rmk}

 \begin{rmk}\label{rmk:de:4.4-1}\cite[Proposition 3.16]{GMMP} Let $(L, [,])$ be a Lie algebra, $\a, \b: L\lr L$ be two commuting linear maps such that $\a[x, y]=[\a(x), \a(y)], \b[x, y]=[\b(x), \b(y)]$. Then $(L, [\cdot,\cdot]'=[\cdot,\cdot]\circ (\a\o\b), \a, \b)$ is a BiHom-Lie algebra.
 \end{rmk}

 \begin{defi}\label{de:4.5}\cite{LMMP2} A {\bf BiHom-Poisson algebra} is a 5-tuple $(L,\cdot,[,],\a,\b)$ such that $(L,\cdot,\a,\b)$ is a BiHom-commutative algebra, $(L,[,],\a,\b)$ is a BiHom-Lie algebra and the BiHom-Leibniz rule holds, i.e.,
 \begin{eqnarray}
 &[\a\b(x),y\cdot z]=[\b(x),y]\cdot \b(z)+\b(y)\cdot[\a(x),z],&\label{eq:4.6}
 \end{eqnarray}
 for all $x, y, z\in L.$ If the structure maps $\a, \b$ are invertible, we call $(L, \cdot, [,], \a, \b)$ {\bf regular}.
 \end{defi}

 Let $(L, \mu)$ be an associative algebra (write $\mu(x\o y)=x y$). Unless otherwise specified, that $D:L\lr L$ is a derivation means $D(xy)=D(x)y+xD(y)$, $\forall~x, y\in L$. Similar definition for Lie algebras.

\section{Transposed BiHom-Poisson algebras}\label{se:tbp} The aim of this section is to investigate the properties of TBP algebras.
\subsection{Definition}
 \begin{defi}\label{de:5.1} A {\bf transposed BiHom-Poisson algebra} is a 5-tuple $(L, \cdot, [,], \a, \b)$ such that $(L, \cdot, \a$, $\b)$ is a BiHom-commutative algebra, $(L, [,], \a, \b)$ is a BiHom-Lie algebra and the following identity holds
 \begin{eqnarray}
 &2\a\b(x)\cdot[y,z]=[\b(x)\cdot y,\b(z)]+[\b(y),\a(x)\cdot z],&\label{eq:5.1}
 \end{eqnarray}
 for all $x, y, z\in L.$ If the structure maps $\a, \b$ are invertible, we call $(L, \cdot, [,], \a, \b)$ {\bf regular}.
 \end{defi}

 \begin{rmk} (1) If $\a=\b=\id$, then the TBP algebra is exactly the transposed Poisson algebra introduced in \cite{BBGW}.

 (2) Eq.(\ref{eq:5.1}) can be obtained by exchanging the two operations $\cdot$ and $[, ]$ in  Eq.(\ref{eq:4.6}) and at the same time the factor 2 appears at the left hand.
 \end{rmk}

 \begin{pro}\label{pro:5.2} Let $(L, \cdot, [,])$ be a transposed Poisson algebra,  $\a, \b: L\lr L$ be two commuting linear maps such that $\a(x\cdot y)=\a(x)\cdot \a(y), \b(x\cdot y)=\b(x)\cdot \b(y), \a[x, y]=[\a(x), \a(y)], \b[x, y]=[\b(x), \b(y)]$. Then $(L, \cdot'=\cdot\circ (\a\o\b), [\cdot,\cdot]'=[\cdot,\cdot]\circ (\a\o\b), \a, \b)$ is a TBP algebra, called the ``Yau twist" of $(L, \cdot, [,]).$
 \end{pro}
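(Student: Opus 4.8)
The plan is to verify the three conditions of Definition \ref{de:5.1} for the twisted data $(L,\cdot',[\cdot,\cdot]',\a,\b)$, handling the two structural axioms by the Yau-twist facts already recorded and reserving the actual computation for the twisted transposed Leibniz identity \eqref{eq:5.1}.

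First I would dispatch the structural axioms. Since $(L,\cdot)$ is commutative associative and $\a,\b$ commute and satisfy $\a(x\cdot y)=\a(x)\cdot\a(y)$, $\b(x\cdot y)=\b(x)\cdot\b(y)$, the hypotheses of Eq.\eqref{eq:1.2} hold, so Remark \ref{rmk:de:4.1}(2) shows that $(L,\cdot'=\cdot\ci(\a\o\b),\a,\b)$ is BiHom-commutative. Similarly, because $(L,[,])$ is a Lie algebra and $\a,\b$ are commuting morphisms for $[,]$, Remark \ref{rmk:de:4.4-1} shows that $(L,[\cdot,\cdot]'=[\cdot,\cdot]\ci(\a\o\b),\a,\b)$ is a BiHom-Lie algebra. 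The attendant multiplicativity identities for $\cdot'$ and $[\cdot,\cdot]'$ are automatic, since $\a,\b$ commute and are multiplicative for the untwisted operations. This settles the first two clauses with no calculation.

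The core is then Eq.\eqref{eq:5.1}. I would expand both sides using $x\cdot' y=\a(x)\cdot\b(y)$ and $[x,y]'=[\a(x),\b(y)]$, repeatedly pushing $\a,\b$ through $\cdot$ and $[,]$ and using $\a\b=\b\a$ to collect the iterated powers. A direct computation turns the left-hand side into
\[
2\a\b(x)\cdot'[y,z]'=2\,\a^{2}\b(x)\cdot[\a\b(y),\b^{2}(z)],
\]
and turns the two summands on the right-hand side into
\[
[\b(x)\cdot' y,\b(z)]'=[\a^{2}\b(x)\cdot\a\b(y),\b^{2}(z)],\qquad [\b(y),\a(x)\cdot' z]'=[\a\b(y),\a^{2}\b(x)\cdot\b^{2}(z)].
\]
Now I would invoke the untwisted transposed Poisson identity $2u\cdot[v,w]=[u\cdot v,w]+[v,u\cdot w]$ of $(L,\cdot,[,])$ with $u=\a^{2}\b(x)$, $v=\a\b(y)$, $w=\b^{2}(z)$: its left-hand side is precisely the displayed left-hand side, and its right-hand side is precisely the sum of the two displayed summands. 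Hence Eq.\eqref{eq:5.1} holds and $(L,\cdot',[\cdot,\cdot]',\a,\b)$ is a TBP algebra.

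The only delicate point, and the place I expect the sole obstacle, is the bookkeeping in expanding Eq.\eqref{eq:5.1}: one must keep track of which slot of each twisted product and bracket receives $\a$ and which receives $\b$, and then confirm that after pushing the structure maps inward the three arguments line up as $\a^{2}\b(x)$, $\a\b(y)$, $\b^{2}(z)$, so that a single application of the transposed Poisson axiom closes the identity. Since $\a$ and $\b$ commute and are multiplicative, no associativity or compatibility obstruction can appear, so once the exponents are matched the remaining check is purely mechanical.
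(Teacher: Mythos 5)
Your proposal is correct and follows essentially the same route as the paper: the BiHom-commutative and BiHom-Lie structures are dispatched via Remarks \ref{rmk:de:4.1} and \ref{rmk:de:4.4-1}, and the compatibility Eq.\eqref{eq:5.1} is verified by expanding the twisted operations and applying the untwisted transposed Poisson identity with arguments $\a^{2}\b(x)$, $\a\b(y)$, $\b^{2}(z)$, which is exactly the paper's computation.
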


 \begin{proof} 
 We only check the compatible condition as follows. And others are obvious by Remarks \ref{rmk:de:4.1} and \ref{rmk:de:4.4-1}. For all $x, y, z\in L$, we have
 \begin{eqnarray*}
 [\b(x)\cdot' y, \b(z)]'+[\b(y), \a(x)\cdot' z]'
 &=&[\a^{2}\b(x)\cdot\a\b(y), \b^{2}(z)]+[\a\b(y), \a^{2}\b(x)\cdot\b^{2}(z)]\\
 &=&2\a^{2}\b(x)\cdot[\a\b(y), \b^2(z)]=2\a\b(x)\cdot' [y, z]',
 \end{eqnarray*}
 as we needed.
 \end{proof}

 \begin{pro}\label{pro:5.3} Let $(L, \mu)$ be a commutative associative algebra and $D:L\lr L$ a derivation. Assume that $\a, \b: L\lr L$ are two commuting algebra maps and any two of the maps $\a, \b, D$ commute. Define a Lie bracket on $L$ by
 \begin{eqnarray}
 &[x,y]=\a(x)D\b(y)-\b(y)D\a(x), \ \forall x, y\in L.&\label{eq:5.2}
 \end{eqnarray}
 Then $(L, \cdot, [,], \a, \b)$ is a TBP algebra, where $x\cdot y=\a(x)\b(y)$.
 \end{pro}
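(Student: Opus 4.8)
The plan is to sidestep a direct verification of the three TBP axioms and instead realize the structure in the statement as a Yau twist, letting Proposition~\ref{pro:5.2} do the work. The starting point is the classical observation behind \cite{BBGW}: on a commutative associative algebra $(L,\mu)$ (writing $\mu(x\o y)=xy$) equipped with a derivation $D$, the bracket
\begin{eqnarray*}
&[x,y]_{0}:=xD(y)-yD(x)&
\end{eqnarray*}
makes $(L,\mu,[\cdot,\cdot]_{0})$ a transposed Poisson algebra. Skew-symmetry is immediate; the Jacobi identity and the transposed compatibility $2x[y,z]_{0}=[xy,z]_{0}+[y,xz]_{0}$ both reduce, after expanding $D$ by the Leibniz rule, to cancellations of the first-order square terms and the $D^{2}$ terms that use only commutativity of $\mu$.

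Next I would check that $\a$ and $\b$ are endomorphisms of \emph{both} the associative and the Lie structure, which is exactly the input Proposition~\ref{pro:5.2} demands. That $\a,\b$ are commuting algebra maps for $\mu$ is among the hypotheses. For the bracket, using that $\a$ is an algebra map commuting with $D$,
\begin{eqnarray*}
&\a([x,y]_{0})=\a(x)\a(D(y))-\a(y)\a(D(x))=\a(x)D(\a(y))-\a(y)D(\a(x))=[\a(x),\a(y)]_{0},&
\end{eqnarray*}
and the identical computation for $\b$ yields $\b([x,y]_{0})=[\b(x),\b(y)]_{0}$. It is here that the hypothesis that each of $\a,\b$ commutes with $D$ is used in an essential way.

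Finally I would invoke Proposition~\ref{pro:5.2} with the transposed Poisson algebra $(L,\mu,[\cdot,\cdot]_{0})$ and the pair $\a,\b$. The resulting Yau-twisted product is $x\cdot' y=\a(x)\b(y)=x\cdot y$, and the Yau-twisted bracket is
\begin{eqnarray*}
&[x,y]'=[\a(x),\b(y)]_{0}=\a(x)D(\b(y))-\b(y)D(\a(x))=\a(x)D\b(y)-\b(y)D\a(x),&
\end{eqnarray*}
which is precisely the bracket \meqref{eq:5.2} in the statement. Hence $(L,\cdot,[\cdot,\cdot],\a,\b)$ is the Yau twist of $(L,\mu,[\cdot,\cdot]_{0})$ and is therefore a TBP algebra.

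The only genuine obstacle is the preliminary step, namely establishing that the untwisted bracket $[\cdot,\cdot]_{0}$ gives a \emph{bona fide} transposed Poisson algebra; once this classical fact (available from \cite{BBGW}) is in hand, everything else is bookkeeping together with the compatibility of $\a,\b$ with $\mu$ and $D$. Should a self-contained argument be preferred, one could instead verify Definition~\ref{de:5.1} directly: BiHom-commutativity of $(L,\cdot,\a,\b)$ follows from Remark~\ref{rmk:de:4.1}, whereas the BiHom-Jacobi condition \meqref{eq:4.7} and the transposed identity \meqref{eq:5.1} would have to be expanded via the Leibniz rule and the pairwise commutation of $\a,\b,D$, with the $D^{2}$ terms and the mixed $D\a,D\b$ terms organized so that commutativity of $\mu$ forces the cancellations. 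The Yau-twist route is preferable precisely because it confines this computation inside the already-proved Proposition~\ref{pro:5.2}.
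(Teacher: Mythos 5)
Your proof is correct, and it takes a genuinely different route from the paper. The paper proves Proposition~\ref{pro:5.3} by brute force: it expands the BiHom-Jacobi condition and the transposed compatibility condition \meqref{eq:5.1} directly in terms of $D$, $\a$, $\b$ and the commutative product, and checks that all terms cancel. You instead factor the statement as (classical untwisted case) $+$ (Yau twist): the bracket $[x,y]_{0}=xD(y)-yD(x)$ makes $(L,\mu,[\cdot,\cdot]_{0})$ a transposed Poisson algebra (this is the basic construction in \cite{BBGW}, and your sketch of its verification is accurate), the maps $\a,\b$ are multiplicative for both $\mu$ and $[\cdot,\cdot]_{0}$ precisely because they are algebra maps commuting with $D$, and then Proposition~\ref{pro:5.2} applies; the twisted operations $x\cdot' y=\a(x)\b(y)$ and $[x,y]'=[\a(x),\b(y)]_{0}=\a(x)D\b(y)-\b(y)D\a(x)$ reproduce exactly the operations in the statement. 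I checked this identification and it is on the nose, and all hypotheses of Proposition~\ref{pro:5.2} (commutation of $\a,\b$, multiplicativity for both operations) are available from the hypotheses of Proposition~\ref{pro:5.3}. What your route buys is conceptual clarity and economy: the only computation left is the untwisted one, which involves no structure maps, and the proposition is revealed as literally the Yau twist of the classical derivation construction. What the paper's route buys is self-containedness: it does not lean on \cite{BBGW} (which is cited but whose proof is external to this paper), at the cost of a longer twisted computation. The one point you should make explicit rather than cite is the untwisted transposed Poisson verification, since \cite{BBGW} is a preprint reference; as you note, it is a short Leibniz-rule calculation, so including it would make your argument fully self-contained.
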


 \begin{proof} We only check the BiHom-Jacobi condition Eq.(\ref{eq:4.5}) and the compatible condition Eq.(\ref{eq:5.1}) as follows. For all $x, y, z\in L$, we have{\small
 \begin{eqnarray*}
 \circlearrowleft_{x,y,z \in L}[[\b(x),\a(y)],\a^{2}(z)]\hspace{-2mm}
 &=&\hspace{-2mm}\circlearrowleft_{x,y,z \in L}\Big(\a^{2}\b(x)D\a^{2}\b(y)D\a^{2}\b(z)-\a^{2}\b(z)\a^{2}\b(x)D^{2}\a^{2}\b(y)\\
 &&-\a^{2}\b(z)D\a^{2}\b(x)D\a^{2}\b(y)-\a^{2}\b(y)D\a^{2}\b(x)D\a^{2}\b(z)\\
 &&+\a^{2}\b(z)\a^{2}\b(y)D^{2}\a^{2}\b(x)+\a^{2}\b(z)D\a^{2}\b(y)D\a^{2}\b(x)\Big)=0
 \end{eqnarray*}}
 and{\small
 \begin{eqnarray*}
 [\b(x)\cdot y,\b(z)]+[\b(y),\a(x)\cdot z]\hspace{-2mm}
 &=&\hspace{-2mm}\a^{2}\b(x)\a\b(y)D\b^{2}(z)-\b^{2}(z)D\a^{2}\b(x)\a\b(y)-\b^{2}(z)\a^{2}\b(x)D\a\b(y)\\
 \hspace{-2mm}&&\hspace{-4mm}+\a\b(y)D\a^{2}\b(x)\b^{2}(z)+\a\b(y)\a^{2}\b(x)D\b^{2}(z)-\a^{2}\b(x)\b^{2}(z)D\a\b(y)\\
 \hspace{-2mm}&=&\hspace{-2mm}2\a^{2}\b(x)\a\b(y)D\b^{2}(z)-2\a^{2}\b(x)\b^{2}(z)D\a\b(y)\\
 \hspace{-2mm}&=&\hspace{-2mm}2\a\b(x)\cdot[y,z],
 \end{eqnarray*}}
 as desired.
 \end{proof}

\subsection{Several useful identities}
 The following theorem provides several useful and interesting identities.
 \begin{thm}\label{thm:5.4} Let $(L, \cdot, [,], \a, \b)$ be a TBP algebra. Then the following identities hold:
 \begin{eqnarray}
 &\circlearrowleft_{x,y,z \in L}\a\b^{2}(x)\cdot[\a\b(y),\a^{2}(z)]=0;& \label{eq:5.3}\\  
 &\circlearrowleft_{x,y,z \in L}[\a\b^{2}(h)\cdot[\a\b(x),\a^{2}(y)],\a^{3}\b(z)]=0;& \label{eq:5.4}\\
 &\circlearrowleft_{x,y,z \in L}[[\a\b^{2}(x),\a^{2}\b(y)],\a^{2}\b(z)\cdot\a^{3}(h)]=0;& \label{eq:5.5}\\
 &\circlearrowleft_{x,y,z \in L}[\a\b^{3}(x),\a^{2}\b^{2}(y)]\cdot[\a^{2}\b(h),\a^{3}\b(z)]=0.& \label{eq:5.6}
 \end{eqnarray}
 where $x, y, z, u, v\in L$.
 \end{thm}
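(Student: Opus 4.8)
The plan is to treat \eqref{eq:5.3} as the base identity and to derive \eqref{eq:5.4}--\eqref{eq:5.6} from it, together with the compatible condition \eqref{eq:5.1}, the BiHom-Jacobi identity \eqref{eq:4.7}, and the structural relations \eqref{eq:4.2} and \eqref{eq:4.4}. For \eqref{eq:5.3} I would substitute $(x,y,z)\mapsto(\b(x),\a\b(y),\a^2(z))$ into \eqref{eq:5.1}, obtaining $2\a\b^2(x)\cdot[\a\b(y),\a^2(z)]=[\b^2(x)\cdot\a\b(y),\a^2\b(z)]+[\a\b^2(y),\a\b(x)\cdot\a^2(z)]$. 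Summing cyclically, I would rewrite each product on the right through a single $\b$ (e.g.\ $\b^2(x)\cdot\a\b(y)=\b(\b(x)\cdot\a(y))$ and $\a^2\b(z)=\a(\a\b(z))$), apply BiHom-skew-symmetry \eqref{eq:4.4} to the first summand, and then use BiHom-commutativity \eqref{eq:4.2} in the form $\a\b(y)\cdot\a^2(z)=\a\b(z)\cdot\a^2(y)$ to see that the two cyclic sums cancel term by term; since the characteristic is $0$ the factor $2$ is harmless.

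For \eqref{eq:5.4} I would apply \eqref{eq:5.1} to the summand with $\b(u)\cdot v=\a\b^2(h)\cdot[\a\b(x),\a^2(y)]$ and $\b(w)=\a^3\b(z)$, producing $2\a^2\b^2(h)\cdot[[\a\b(x),\a^2(y)],\a^3(z)]-[[\a\b^2(x),\a^2\b(y)],\a^2\b(h)\cdot\a^3(z)]$. After the cyclic sum the first term dies, because $\circlearrowleft_{x,y,z}[[\a\b(x),\a^2(y)],\a^3(z)]=\a\bigl(\circlearrowleft_{x,y,z}[[\b(x),\a(y)],\a^2(z)]\bigr)=0$ by \eqref{eq:4.7}, while the second term is, after \eqref{eq:4.2}, exactly the cyclic sum in \eqref{eq:5.5}. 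Hence \eqref{eq:5.4} and \eqref{eq:5.5} are negatives of one another, and it suffices to establish \eqref{eq:5.5}.

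The identity \eqref{eq:5.5} is the crux, and I expect to prove it by a \emph{self-referential linear relation}. First, bracketing the (already vanishing) left side of \eqref{eq:5.3} with a fixed $\a^3\b(h)$ gives the auxiliary identity $\circlearrowleft_{x,y,z}[\a\b^2(x)\cdot[\a\b(y),\a^2(z)],\a^3\b(h)]=0$, which I will use to kill the pure product-with-bracket pieces. Viewing the summand of \eqref{eq:5.5} as $[[\b(p),\a(q)],\a^2(r)]$ with $p=\a\b(x)$, $q=\a\b(y)$, $r=\b(z)\cdot\a(h)$, I would apply \eqref{eq:4.7} to replace it by the other two cyclic double brackets; expanding the resulting $[\,\text{element},\text{product}\,]$ and $[\,\text{product},\text{element}\,]$ brackets via \eqref{eq:5.1}, the product-bracket pieces cancel in pairs using \eqref{eq:4.2} and \eqref{eq:4.4} (together with the auxiliary identity). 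A \emph{second} application of \eqref{eq:4.7} to the surviving double brackets, combined with the inner-bracket antisymmetry $[\a\b(x),\a^2(y)]=-[\a\b(y),\a^2(x)]$, then re-expresses everything in terms of the cyclic sum \eqref{eq:5.5} itself, yielding a relation $\eqref{eq:5.5}=\lambda\cdot\eqref{eq:5.5}$ with $\lambda\neq 1$. As the characteristic is $0$, this forces \eqref{eq:5.5} $=0$, and hence \eqref{eq:5.4} $=0$. The main obstacle is precisely the bookkeeping here: controlling the proliferation of $[[\cdot,\cdot],\text{product}]$ and $[\text{product},[\cdot,\cdot]]$ terms and verifying that the self-referential coefficient really differs from $1$. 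The conceptual point is that \eqref{eq:5.1} is ``reversible'' (it relates products of brackets and brackets of products symmetrically), so it alone only produces tautologies; it is the non-reversible Jacobi identity \eqref{eq:4.7} that supplies the extra relation needed to pin the common value of \eqref{eq:5.4}--\eqref{eq:5.5} to zero.

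Finally, for \eqref{eq:5.6} I would apply \eqref{eq:5.1} to the product of the two brackets, writing $[\a\b^3(x),\a^2\b^2(y)]\cdot[\a^2\b(h),\a^3\b(z)]=\a\b(u)\cdot[v,w]$ with $u=[\b^2(x),\a\b(y)]$, $v=\a^2\b(h)$, $w=\a^3\b(z)$. The two resulting summands, after \eqref{eq:4.2} and \eqref{eq:4.4}, become a $\b$-twisted instance of \eqref{eq:5.4} (replace $x,y,z$ by $\b(x),\b(y),\b(z)$) and a $\b$-twisted instance of the bracketed form of \eqref{eq:5.3} above, both of which are already known to vanish. Thus \eqref{eq:5.6} follows cleanly from \eqref{eq:5.3} and \eqref{eq:5.4}, and the entire theorem reduces to the single hard point \eqref{eq:5.5}.
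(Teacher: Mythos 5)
Your proposal is correct and takes essentially the same route as the paper's own proof: \eqref{eq:5.3} by telescoping the cyclic sum of \eqref{eq:5.1}; then two independent linear relations between the cyclic sums in \eqref{eq:5.4} and \eqref{eq:5.5} --- their sum vanishes, and the sum with the \eqref{eq:5.4}-term doubled also vanishes --- derived exactly as you describe from \eqref{eq:5.1}, \eqref{eq:4.7}, \eqref{eq:4.2} and \eqref{eq:4.4}, which force both sums to be zero; and \eqref{eq:5.6} by expanding the product of brackets via \eqref{eq:5.1} and invoking \eqref{eq:5.3} and \eqref{eq:5.4}. The bookkeeping you flag as the main risk does close as you predict: the paper's Eqs.~(\ref{eq:5.10}) and (\ref{eq:5.11}) are precisely these two relations, so your self-referential coefficient is $\lambda=2\neq 1$.
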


 \begin{proof} We check the equalities as follows.

 {\bf Proof of Eq.(\ref{eq:5.3}):} We compute
 \begin{eqnarray*}
 2\a\b^{2}(x)\cdot[\a\b(y),\a^{2}(z)]&\stackrel{(\ref{eq:5.1})}=&[\b^{2}(x)\cdot \a\b(y),\a^{2}\b(z)]+[\a\b^{2}(y),\a\b(x)\cdot \a^{2}(z)]\\
 &\stackrel{(\ref{eq:4.2})(\ref{eq:4.4})}=&[\b^{2}(x)\cdot \a\b(y),\a^{2}\b(z)]-[\b^{2}(z)\cdot\a\b(x),\a^{2}\b(y)].
 \end{eqnarray*}
 Similarly,
 \begin{eqnarray*}
 &&2\a\b^{2}(y)\cdot[\a\b(z),\a^{2}(x)]=[\b^{2}(y)\cdot\a\b(z),\a^{2}\b(x)]
 -[\b^{2}(x)\cdot\a\b(y),\a^{2}\b(z)],\\
 &&2\a\b^{2}(z)\cdot[\a\b(x),\a^{2}(y)]=[\b^{2}(z)\cdot\a\b(x),\a^{2}\b(y)]
 -[\b^{2}(y)\cdot\a\b(z),\a^{2}\b(x)].
 \end{eqnarray*}
 By summing the above three formulas, we have Eq.(\ref{eq:5.3}).

 {\bf Proof of Eq.(\ref{eq:5.4}):} By Eq.(\ref{eq:5.1}), we can get
 \begin{eqnarray}
 &[\a\b^{2}(y),\a\b(h)\cdot\a^{2}(z)]=2\a\b^{2}(h)\cdot[\a\b(y),\a^{2}(z)]-[\b^{2}(h)\cdot \a\b(y),\a^{2}\b(z)].&\label{eq:5.9}
 \end{eqnarray}
 Applying BiHom-Jacobi condition Eq.(\ref{eq:4.7}), then
 \begin{eqnarray}
 &&[[\a\b^{2}(x),\a^{2}\b(y)],\a^{2}\b(z)\cdot\a^{3}(h)]+[[\a\b^{2}(y),\a\b(z)\cdot\a^{2}(h)],\a^{3}\b(x)]\nonumber\\
 &&\hspace{60mm}+[[\b^{2}(z)\cdot\a\b(h),\a^{2}\b(x)],\a^{3}\b(y)]=0.\label{eq:5.10}
 \end{eqnarray}
 By Eqs.(\ref{eq:4.2}) and (\ref{eq:5.9}),
 \begin{eqnarray}
 &&[[\a\b^{2}(x),\a^{2}\b(y)],\a^{2}\b(z)\cdot\a^{3}(h)]
 +2[\a\b^{2}(h)\cdot[\a\b(y),\a^{2}(z)],\a^{3}\b(x)]\nonumber\\
 &&-[[\b^{2}(h)\cdot \a\b(y),\a^{2}\b(z)],\a^{3}\b(x)]
 +[[\b^{2}(h)\cdot\a\b(z),\a^{2}\b(x)],\a^{3}\b(y)]=0.\label{eq:a-1}
 \end{eqnarray}
 Likewise,
 \begin{eqnarray}
 &&[[\a\b^{2}(y),\a^{2}\b(z)],\a^{2}\b(x)\cdot\a^{3}(h)]
 +2[\a\b^{2}(h)\cdot[\a\b(z),\a^{2}(x)],\a^{3}\b(y)]\nonumber\\
 &&-[[\b^{2}(h)\cdot\a\b(z),\a^{2}\b(x)],\a^{3}\b(y)]
 +[[\b^{2}(h)\cdot\a\b(x),\a^{2}\b(y)],\a^{3}\b(z)]=0,\label{eq:a-2}\\
 &&[[\a\b^{2}(z),\a^{2}\b(x)],\a^{2}\b(y)\cdot\a^{3}(h)]
 +2[\a\b^{2}(h)\cdot[\a\b(x),\a^{2}(y)],\a^{3}\b(z)]\nonumber\\
 &&-[[\b^{2}(h)\cdot\a\b(x),\a^{2}\b(y)],\a^{3}\b(z)]
 +[[\b^{2}(h)\cdot\a\b(y),\a^{2}\b(z)],\a^{3}\b(x)]=0.\label{eq:a-3}
 \end{eqnarray}
 By Eq.(\ref{eq:a-1}) plus Eq.(\ref{eq:a-2}) plus Eq.(\ref{eq:a-3}), one gets
 \begin{eqnarray}
 &&\hspace{-15mm}\circlearrowleft_{x,y,z \in L}[[\a\b^{2}(x),\a^{2}\b(y)],\a^{2}\b(z)\cdot\a^{3}(h)]
 +\circlearrowleft_{x,y,z \in L}2[\a\b^{2}(h)\cdot[\a\b(x),\a^{2}(y)],\a^{3}\b(z)]=0.\label{eq:5.10}
 \end{eqnarray}
 On the other hand, by Eqs.(\ref{eq:5.1}) and (\ref{eq:4.2}), we obtain
 \begin{eqnarray}
 &&2\a^{2}\b^{2}(h)\cdot[[\a\b(x),\a^{2}(y)],\a^{3}(z)]\nonumber\\
 &&\hspace{20mm}=[[\a\b^{2}(x),\a^{2}\b(y)],\a^{2}\b(z)\cdot\a^{3}(h)]
 +[\a\b^{2}(h)\cdot[\a\b(x),\a^{2}(y)],\a^{3}\b(z)],\label{eq:a-4}\\
 &&2\a^{2}\b^{2}(h)\cdot[[\a\b(y),\a^{2}(z)],\a^{3}(x)]\nonumber\\
 &&\hspace{20mm}=[[\a\b^{2}(y),\a^{2}\b(z)],\a^{2}\b(x)\cdot\a^{3}(h)]
 +[\a\b^{2}(h)\cdot[\a\b(y),\a^{2}(z)],\a^{3}\b(x)],\label{eq:a-5}\\
 &&2\a^{2}\b^{2}(h)\cdot[[\a\b(z),\a^{2}(x)],\a^{3}(y)]\nonumber\\
 &&\hspace{20mm}=[[\a\b^{2}(z),\a^{2}\b(x)],\a^{2}\b(y)\cdot\a^{3}(h)]
 +[\a\b^{2}(h)\cdot[\a\b(z),\a^{2}(x)],\a^{3}\b(y)].\label{eq:a-6}
 \end{eqnarray}
 Then the equality below can be obtained by Eq.(\ref{eq:a-4}) plus Eq.(\ref{eq:a-5}) plus Eq.(\ref{eq:a-6}) and by Eq.(\ref{eq:4.7}).
 \begin{eqnarray}
 &&\circlearrowleft_{x,y,z \in L}\Big([[\a\b^{2}(x),\a^{2}\b(y)],\a^{2}\b(z)\cdot\a^{3}(h)]\label{eq:5.11}
 +[\a\b^{2}(h)\cdot[\a\b(x),\a^{2}(y)],\a^{3}\b(z)]\Big)=0.
 \end{eqnarray}
 By Eq.(\ref{eq:5.11}) minus Eq.(\ref{eq:5.10}), one has Eq.(\ref{eq:5.4}).

 {\bf Proof of Eq.(\ref{eq:5.5}):} By Eq.(\ref{eq:5.11}) minus Eq.(\ref{eq:5.4}), one gets Eq.(\ref{eq:5.5}).

 {\bf Proof of Eq.(\ref{eq:5.6}):}
 By Eqs.(\ref{eq:5.1}) and (\ref{eq:4.2}), we obtain
 \begin{eqnarray}
 &&2[\a\b^{3}(x),\a^{2}\b^{2}(y)]\cdot[\a^{2}\b(h),\a^{3}\b(z)]\nonumber\\
 &&\hspace{15mm}=[\a\b^{2}(h)\cdot[\a\b^{2}(x),\a^{2}\b(y)],\a^{3}\b^{2}(z)]
 +[\a^{2}\b^{2}(h),\a^{2}\b^{2}(z)\cdot[\a^{2}\b(x),\a^{3}(y)],\quad\label{eq:a-7}\\
 &&2[\a\b^{3}(y),\a^{2}\b^{2}(z)]\cdot[\a^{2}\b(h),\a^{3}\b(x)]\nonumber\\
 &&\hspace{15mm}=[\a\b^{2}(h)\cdot[\a\b^{2}(y),\a^{2}\b(z)],\a^{3}\b^{2}(x)]
 +[\a^{2}\b^{2}(h),\a^{2}\b^{2}(x)\cdot[\a^{2}\b(y),\a^{3}(z)],\quad\label{eq:a-8}\\
 &&2[\a\b^{3}(z),\a^{2}\b^{2}(x)]\cdot[\a^{2}\b(h),\a^{3}\b(y)]\nonumber\\
 &&\hspace{15mm}=[\a\b^{2}(h)\cdot[\a\b^{2}(z),\a^{2}\b(x)],\a^{3}\b^{2}(y)]
 +[\a^{2}\b^{2}(h),\a^{2}\b^{2}(y)\cdot[\a^{2}\b(z),\a^{3}(x)].\quad\label{eq:a-9}
 \end{eqnarray}
 By Eq.(\ref{eq:a-7}) plus Eq.(\ref{eq:a-8}) plus Eq.(\ref{eq:a-9}) and Eqs.(\ref{eq:5.3}) and (\ref{eq:5.4}), Eq.(\ref{eq:5.6}) holds. We finish the proof.
 \end{proof}

 \begin{defi}\label{de:7.3} We call a BP algebra $(L, \cdot, [,], \a, \b)$ {\bf strong} if Eq.(\ref{eq:5.6}) holds.
 \end{defi}

 \begin{rmk} Eq.(\ref{eq:5.6}) holds for any TBP algebras.
 \end{rmk}

 The following result shows that the relation between BP algebras and TBP algebras.
 \begin{pro}\label{pro:5.7} Let $(L, \cdot, \a, \b)$ be a BiHom-commutative algebra and $(L, [,], \a, \b)$ be a BiHom-Lie algebra. Assume that $(L, \cdot, [, ], \a, \b)$ is both a BP algebra and a TBP algebra. Then
 \begin{eqnarray}
 &\a\b^{2}(z)\cdot[\a\b(x),\a^{2}(y)]=[\a\b^{2}(x),\a\b(z)\cdot \a^{2}(y)]=0.&\label{eq:5.14}
 \end{eqnarray}
 \end{pro}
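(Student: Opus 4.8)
The plan is to prove both vanishing statements simultaneously by abbreviating the two recurring shapes of terms and playing the two compatibility conditions available (the BiHom-Leibniz rule of the BP structure and the transposed identity of the TBP structure) against each other, then closing with the cyclic identity Eq.~(\ref{eq:5.3}), which holds for any TBP algebra. Write $T(a,b,c):=\a\b^{2}(a)\cdot[\a\b(b),\a^{2}(c)]$ and $Q(a,b,c):=[\a\b^{2}(a),\a\b(b)\cdot\a^{2}(c)]$; the goal is exactly $T(z,x,y)=Q(x,z,y)=0$. Two preliminary reductions are pure BiHom-Lie/BiHom-commutative bookkeeping: BiHom-skew-symmetry Eq.~(\ref{eq:4.4}) gives the antisymmetry $T(a,b,c)=-T(a,c,b)$, while BiHom-commutativity Eq.~(\ref{eq:4.2}) together with Eq.~(\ref{eq:4.4}) lets one rewrite any ``product of a bracket'' term back into a $T$-term.

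First I would expand $Q(x,z,y)$ with the BiHom-Leibniz rule Eq.~(\ref{eq:4.6}), which is available since $L$ is a BP algebra; writing $\a\b^{2}(x)=\a\b(\b(x))$ one reads off
\[
Q(x,z,y)=[\b^{2}(x),\a\b(z)]\cdot\a^{2}\b(y)+\a\b^{2}(z)\cdot[\a\b(x),\a^{2}(y)].
\]
The second summand is already $T(z,x,y)$; using that $\b$ is an algebra and Lie map and then Eq.~(\ref{eq:4.2}), the first summand becomes $T(y,x,z)$. This yields the first key relation $Q(a,b,c)=T(c,a,b)+T(b,a,c)$, in particular $Q(x,z,y)=T(y,x,z)+T(z,x,y)$.

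Next I would expand the same $Q(x,z,y)$, this time with the transposed identity Eq.~(\ref{eq:5.1}) of the TBP structure, choosing the arguments so that the second bracket on its right-hand side is precisely $Q(x,z,y)$. This gives $2T(z,x,y)=[\b^{2}(z)\cdot\a\b(x),\a^{2}\b(y)]+Q(x,z,y)$, and eliminating $Q$ via the first relation produces $[\b^{2}(z)\cdot\a\b(x),\a^{2}\b(y)]=T(z,x,y)-T(y,x,z)$. On the other hand, applying BiHom-skew-symmetry Eq.~(\ref{eq:4.4}) directly to the bracket $[\b^{2}(z)\cdot\a\b(x),\a^{2}\b(y)]$ rewrites it as $-Q(y,z,x)$, which by the first relation (with permuted arguments) equals $-T(x,y,z)-T(z,y,x)$.

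Equating the two evaluations of $[\b^{2}(z)\cdot\a\b(x),\a^{2}\b(y)]$ and simplifying with the antisymmetry $T(a,b,c)=-T(a,c,b)$ collapses everything to $T(x,y,z)+T(y,z,x)=0$. Comparing with the cyclic identity Eq.~(\ref{eq:5.3}), namely $T(x,y,z)+T(y,z,x)+T(z,x,y)=0$, forces $T(z,x,y)=0$; since $x,y,z$ are arbitrary, all $T$-terms vanish, and then $Q(x,z,y)=T(y,x,z)+T(z,x,y)=0$ as well. The main obstacle is not conceptual but the twist bookkeeping: one must track the exact powers of $\a$ and $\b$ when passing an element through the algebra/Lie-map properties and through Eqs.~(\ref{eq:4.2}) and (\ref{eq:4.4}), since a single misplaced twist or sign would break the identification of the ``mixed'' product-bracket terms on which the whole cancellation rests.
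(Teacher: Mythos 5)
Your proof is correct and follows essentially the same route as the paper's: both expand the mixed product--bracket terms $[\b^{2}(z)\cdot\a\b(x),\a^{2}\b(y)]$ and $[\a\b^{2}(x),\a\b(z)\cdot\a^{2}(y)]$ via the BiHom-Leibniz rule Eq.~(\ref{eq:4.6}) together with Eqs.~(\ref{eq:4.2}) and (\ref{eq:4.4}), feed them into the transposed identity Eq.~(\ref{eq:5.1}), reduce by skew-symmetry to $T(x,y,z)+T(y,z,x)=0$, and invoke the cyclic identity Eq.~(\ref{eq:5.3}) to force $T=0$ and then $Q=0$. The only difference is notational bookkeeping (your $T$, $Q$ shorthand and the order in which the bracket term is eliminated), not mathematical substance.
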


 \begin{proof} For all $x, y, z\in L$, we have
 \begin{eqnarray}
 [\b^{2}(z)\cdot\a\b(x),\a^{2}\b(y)]\hspace{-3mm}&\stackrel{(\ref{eq:4.4})(\ref{eq:4.6})}=&
 \hspace{-3mm}
 -[\b^{2}(y),\a\b(z)]\cdot \a^{2}\b(x)-\a\b^{2}(z)\cdot[\a\b(y),\a^{2}(x)]\nonumber\\
 \hspace{-4mm}&\stackrel{(\ref{eq:4.2})}=&\hspace{-6mm}-\a\b^{2}(x)\cdot[\a\b(y),\a^{2}(z)]
 -\a\b^{2}(z)\cdot[\a\b(y),\a^{2}(x)],\label{eq:5.17}
 \end{eqnarray}
 and
 \begin{eqnarray}
 [\a\b^{2}(x),\a\b(z)\cdot \a^{2}(y)]\hspace{-4mm}&\stackrel{(\ref{eq:4.6})}=&\hspace{-4mm}\a\b^{2}(z)\cdot[\a\b(x),\a^{2}(y)]
 +[\b^{2}(x),\a\b(z)]\cdot \a^{2}\b(y)\nonumber\\
 &\stackrel{(\ref{eq:4.2})(\ref{eq:4.4})}=&\hspace{-4mm}\a\b^{2}(y)\cdot[\a\b(x),\a^{2}(z)]
 -\a\b^{2}(z)\cdot[\a\b(y),\a^{2}(x)].\label{eq:5.18}
 \end{eqnarray}
 Then
 \begin{eqnarray*}
 0&\stackrel{(\ref{eq:5.1})}=&[\b^{2}(z)\cdot \a\b(x),\a^{2}\b(y)]+[\a\b^{2}(x),\a\b(z)\cdot \a^{2}(y)]-2\a\b^{2}(z)\cdot[\a\b(x),\a^{2}(y)]\\
 &\stackrel{(\ref{eq:5.17})(\ref{eq:5.18})}=&-\a\b^{2}(x)\cdot[\a\b(y),\a^{2}(z)]\cdot\a^{2}\b(x)-\a\b^{2}(z)\cdot[\a\b(y),\a^{2}(x)]\\
 &&+\a\b^{2}(y)\cdot[\a\b(x),\a^{2}(z)]-\a\b^{2}(z)\cdot[\a\b(y),\a^{2}(x)]-2\a\b^{2}(z)\cdot[\a\b(x),\a^{2}(y)]\\
 &\stackrel{(\ref{eq:4.4})}=&-\a\b^{2}(x)\cdot [\a\b(y),\a^{2}(z)]-\a\b^{2}(y)\cdot[\a\b(z),\a^{2}(x)]\\
 &\stackrel{(\ref{eq:5.3})}=&\a\b^{2}(z)\cdot[\a\b(x),\a^{2}(y)].
 \end{eqnarray*}
 By Eq.(\ref{eq:5.18}) and the above identity, one gets $[\a\b^{2}(x),\a\b(z)\cdot \a^{2}(y)]=0$.
 \end{proof}

 \begin{rmk}\label{rmk:5.7a} (1) Assume that the structure maps $\a, \b$ are bijective. Then Eq.(\ref{eq:5.14}) is equivalent to
 \begin{eqnarray}
 &z\cdot[x,y]=[x,z\cdot y]=0.&\label{eq:5.14a}
 \end{eqnarray}
 In this case for Proposition \ref{pro:5.7}, the condition Eq.(\ref{eq:5.14a}) is also sufficient.

 (2) The intersection of TBP algebras and BP algebras is almost empty.
 \end{rmk}

\subsection{Tensor products of TBP algebras} In this subsection, we will prove that the tensor product of two (T)BP algebras is closed.
 \begin{lem}\label{lem:5.5} Let $(A,\cdot_{A},[,]_{A},\a_{A},\b_{A})$ and $(B,\cdot_{B},[,]_{B},\a_{B},\b_{B})$ be two regular BP algebras. Define two operations $\cdot$ and $[,]$ on $A\o B$ by
 \begin{eqnarray}
 &(a\o b)\cdot (a'\o b')=a\cdot_{A} a'\o b\cdot_{B}b',&\label{eq:5.12}\\
 &[a\o b,a'\o b']=[a,a']_{A}\o b\cdot_{B}b'+a\cdot_{A}a'\o [b,b']_{B},&\label{eq:5.13}
 \end{eqnarray}
 for all $a,a'\in A,\ b,b'\in B$. Then $(A\o B,\cdot,[,],\a_{A}\o\a_{B},\b_{A}\o\b_{B})$ is a BP algebra.
 \end{lem}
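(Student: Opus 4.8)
The plan is to exploit the hypothesis that the factors are \emph{regular}: invertibility of the structure maps lets me un-twist each factor to an ordinary Poisson algebra, reduce to the classical tensor product, and twist back. The conditions that $\a_{A}\o\a_{B}$ and $\b_{A}\o\b_{B}$ commute and are multiplicative for both $\cdot$ and $[,]$ are immediate from the componentwise definitions (\ref{eq:5.12}), (\ref{eq:5.13}) together with the corresponding properties in $A$ and $B$; likewise BiHom-associativity and BiHom-commutativity of $\cdot$ hold slotwise. So the real content is the BiHom-Lie axioms for $[,]$ and the BiHom-Leibniz rule (\ref{eq:4.6}).

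For the un-twisting, set $\mu_{A_{0}}(x,y)=\a_{A}^{-1}(x)\cdot_{A}\b_{A}^{-1}(y)$ and $\{x,y\}_{A_{0}}=[\a_{A}^{-1}(x),\b_{A}^{-1}(y)]_{A}$, and similarly for $B$; regularity is exactly what makes these well defined. A short check (using the BiHom axioms (\ref{eq:4.1}), (\ref{eq:4.2}), (\ref{eq:4.4}), (\ref{eq:4.7}) and the Leibniz rule (\ref{eq:4.6})) shows that $(A,\mu_{A_{0}},\{,\}_{A_{0}})$ is an ordinary Poisson algebra whose Yau twist by $\a_{A},\b_{A}$ returns $(A,\cdot_{A},[,]_{A},\a_{A},\b_{A})$, and likewise for $B$. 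I would then invoke the classical fact that the tensor product $A_{0}\o B_{0}$ of two Poisson algebras, with product $\mu_{A_{0}}\o\mu_{B_{0}}$ slotwise and bracket $\{u,u'\}_{A_{0}}\o vv'+uu'\o\{v,v'\}_{B_{0}}$, is again a Poisson algebra.

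The decisive observation is a compatibility identity: the Yau twist of $A_{0}\o B_{0}$ by $\a_{A}\o\a_{B}$ and $\b_{A}\o\b_{B}$ reproduces \emph{exactly} the operations (\ref{eq:5.12}) and (\ref{eq:5.13}). Indeed $\mu_{A_{0}}(\a_{A}(a),\b_{A}(a'))=a\cdot_{A}a'$ and $\{\a_{A}(a),\b_{A}(a')\}_{A_{0}}=[a,a']_{A}$, so twisting the slotwise product gives $a\cdot_{A}a'\o b\cdot_{B}b'$ and twisting the tensor bracket gives $[a,a']_{A}\o b\cdot_{B}b'+a\cdot_{A}a'\o[b,b']_{B}$. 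Hence $(A\o B,\cdot,[,],\a_{A}\o\a_{B},\b_{A}\o\b_{B})$ is literally the Yau twist of a Poisson algebra, and one concludes by the (easy) analogue of Remarks \ref{rmk:de:4.1}, \ref{rmk:de:4.4-1} and Proposition \ref{pro:5.2} for BP algebras: the Yau twist of a Poisson algebra by two commuting multiplicative maps is a BP algebra, the only nontrivial point being the BiHom-Leibniz rule (\ref{eq:4.6}).

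The main obstacle, and the reason regularity is assumed, is thus concentrated in these twist/un-twist lemmas; everything else is formal. If one prefers a direct verification and avoids un-twisting, the genuine difficulty moves to the BiHom-Jacobi identity (\ref{eq:4.7}) for $[,]$: expanding $[[\b(X),\a(Y)],\a^{2}(Z)]$ via (\ref{eq:5.13}) produces four families of terms, two ``pure'' (an $A$-iterated bracket $\o$ a $B$-triple product, and its mirror image) and two ``mixed''. The pure families vanish separately by the BiHom-Jacobi identity in each factor, once one checks that the triple products $(\b_{B}(b)\cdot_{B}\a_{B}(b'))\cdot_{B}\a_{B}^{2}(b'')$ are invariant under cyclic permutation of $(b,b',b'')$ --- which again uses regularity, since this product equals $\mu_{B_{0}}(\a_{B}^{2}\b_{B}(b),\a_{B}^{2}\b_{B}(b'),\a_{B}^{2}\b_{B}(b''))$ and $\mu_{B_{0}}$ is commutative associative. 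The two mixed families must then be shown to cancel under the cyclic sum after rewriting each ``product-inside-a-bracket'' through the BiHom-Leibniz rule (\ref{eq:4.6}) and BiHom-skew-symmetry (\ref{eq:4.4}); this bookkeeping is the tedious heart of the argument, and it is exactly what the un-twisting route sidesteps. The BiHom-Leibniz rule for the tensor, by contrast, reduces cleanly to (\ref{eq:4.6}) in each slot together with (\ref{eq:4.1}) and (\ref{eq:4.2}), where regularity is needed once more to match the two ``$\b(y)\cdot[\a(x),z]$''-type terms.
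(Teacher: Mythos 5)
Your proposal is correct, but it proves the lemma by a genuinely different route than the paper. The paper verifies the axioms directly: it cites \cite[Theorem 2.1]{LMMP2} for the BiHom-commutative structure on $A\o B$, then expands the BiHom-Jacobi cyclic sum for \eqref{eq:5.13} into the two ``pure'' families and one ``mixed'' family (its (I), (II), (III)), kills (I) and (II) with \eqref{eq:4.1}, \eqref{eq:4.2}, \eqref{eq:4.7}, and kills (III) using the auxiliary identity \eqref{eq:5.19} (a consequence of the BiHom-Leibniz rule \eqref{eq:4.6} and BiHom-commutativity) together with \eqref{eq:4.4}; finally it checks \eqref{eq:4.6} on $A\o B$ by hand. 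Your un-twist/tensor/re-twist argument replaces all of this with structural lemmas, and each of the checks you defer does go through: for regular structure maps, $\mu_{A_0}(x,y)=\a_A^{-1}(x)\cdot_A\b_A^{-1}(y)$ and $\{x,y\}_{A_0}=[\a_A^{-1}(x),\b_A^{-1}(y)]_A$ are an ordinary commutative associative product and Lie bracket satisfying the classical Leibniz rule (each verification is a single substitution into \eqref{eq:4.1}, \eqref{eq:4.2}, \eqref{eq:4.4}, \eqref{eq:4.7}, \eqref{eq:4.6}), the twist of the tensor bracket reproduces \eqref{eq:5.13} exactly as you say, and the BP analogue of Proposition \ref{pro:5.2} is routine. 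What your approach buys is brevity and transparency: regularity is used exactly once, in the twist/un-twist equivalence, and the heavy bookkeeping disappears into the classical Poisson tensor product. What the paper's computational approach buys is reusability: its decomposition into (I), (II), (III) and the identity \eqref{eq:5.19} are recycled verbatim in the proof of Theorem \ref{thm:5.6}, where under TBP hypotheses the same mixed family (III) must be shown to vanish by a different mechanism (via \eqref{eq:5.3} and \eqref{eq:5.1}); your route would prove Theorem \ref{thm:5.6} equally well, but only by also importing the classical fact that tensor products of transposed Poisson algebras are closed, rather than getting both results from one shared computation.
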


 \begin{proof} {\bf Step 1.} By \cite[Theorem 2.1]{LMMP2} we get that $(A\o B, \cdot, \a_{A}\o\a_{B}, \b_{A}\o\b_{B})$ is a BiHom-commutative algebra.\\
 {\bf Step 2.} We prove that $(A\o B,\cdot,[,],\a_{A}\o\a_{B},\b_{A}\o\b_{B})$ is a BiHom-Lie algebra. For all $a, a', a''\in A,\ b, b', b''\in B$, it is easy to find that $[(\b_{A}\o\b_{B})(a\o b),(\a_{A}\o\a_{B})(a'\o b')]=-[(\b_{A}\o\b_{B})(a'\o b'),(\a_{A}\o\a_{B})(a\o b)]$. The BiHom-Jacobi condition can be checked as follows.{\small
 \begin{eqnarray*}
 &&[[(\b_{A}\o\b_{B})(a\o b),(\a_{A}\o\a_{B})(a'\o b')],(\a{^{2}}_{A}\o\a{^{2}}_{B})(a''\o b'')]+[[(\b_{A}\o\b_{B})(a'\o b'),\\
 &&(\a_{A}\o\a_{B})(a''\o b'')],(\a{^{2}}_{A}\o\a{^{2}}_{B})(a\o b)]+[[(\b_{A}\o\b_{B})(a''\o b''),(\a_{A}\o\a_{B})(a\o b)],\\
 &&(\a{^{2}}_{A}\o\a{^{2}}_{B})(a'\o b')]=\Big([[\b_{A}(a),\a_{A}(a')],\a{^{2}}_{A}(a'')]\o(\b_{B}(b)\cdot \a_{B}(b'))\cdot \a{^{2}}_{B}(b'')\\
 &&+[[\b_{A}(a'),\a_{A}(a'')],\a{^{2}}_{A}(a)]\o(\b_{B}(b')\cdot \a_{B}(b''))\cdot \a{^{2}}_{B}(b)+[[\b_{A}(a''),\a_{A}(a)],\a{^{2}}_{A}(a')]\\
 &&\o(\b_{B}(b'')\cdot \a_{B}(b))\cdot \a{^{2}}_{B}(b')\Big)+\Big((\b_{A}(a)\cdot \a_{A}(a'))\cdot\a{^{2}}_{A}(a'')\o [[\b_{B}(b),\a_{B}(b')],\a{^{2}}_{B}(b'')]\\
 &&+(\b_{A}(a')\cdot \a_{A}(a''))\cdot\a{^{2}}_{A}(a)\o [[\b_{B}(b'),\a_{B}(b'')],\a{^{2}}_{B}(b)]+(\b_{A}(a'')\cdot \a_{A}(a))\cdot\a{^{2}}_{A}(a')\\
 &&\o [[\b_{B}(b''),\a_{B}(b)],\a{^{2}}_{B}(b')]\Big)+\Big([\b_{A}(a),\a_{A}(a')]\cdot \a{^{2}}_{A}(a'')\o[\b_{B}(b)\cdot \a_{B}(b'),\a{^{2}}_{B}(b'')]\\
 &&+[\b_{A}(a'),\a_{A}(a'')]\cdot \a{^{2}}_{A}(a)\o[\b_{B}(b')\cdot \a_{B}(b''),\a{^{2}}_{B}(b)]+[\b_{A}(a''),\a_{A}(a)]\cdot \a{^{2}}_{A}(a')\\
 &&\o[\b_{B}(b'')\cdot \a_{B}(b),\a{^{2}}_{B}(b')]+[\b_{A}(a)\cdot \a_{A}(a'),\a{^{2}}_{A}(a'')]\o [\b_{B}(b),\a_{B}(b')]\cdot \a{^{2}}_{B}(b'')\\
 &&+[\b_{A}(a')\cdot \a_{A}(a''),\a{^{2}}_{A}(a)]\o [\b_{B}(b'),\a_{B}(b'')]\cdot \a{^{2}}_{B}(b)+[\b_{A}(2a'')\cdot \a_{A}(a),\a{^{2}}_{A}(a')]\\
 &&\o [\b_{B}(b''),\a_{B}(b)]\cdot \a{^{2}}_{B}(b')\Big)\stackrel{\bigtriangleup}{=}\hbox{(I)+(II)+(III)}.
 \end{eqnarray*}}
 By Eqs.(\ref{eq:4.1}), (\ref{eq:4.2}) and (\ref{eq:4.7}), (I)=(II)=0.

 By Eqs.(\ref{eq:4.6}) and (\ref{eq:4.2}), we have
 \begin{eqnarray}
 &[\b(a')\cdot\a(a''),\a^{2}(a)]=-[\b(a),\a(a')]\cdot\a^{2}(a'')
 -[\b(a),\a(a'')]\cdot\a^{2}(a').&\label{eq:5.19}
 \end{eqnarray}
 Then {\small
 \begin{eqnarray*}
 \hbox{(III)}&\hspace{-2mm}\stackrel{(\ref{eq:5.19})}=&\hspace{-2mm}([\b_{A}(a),\a_{A}(a')]\cdot \a{^{2}}_{A}(a'')\o-([\b_{B}(b''),\a_{B}(b)]\cdot\a^{2}_{B}(b')+[\b_{B}(b''),\a_{B}(b')]\cdot\a^{2}_{B}(b))\\
 &&\hspace{-2mm}+[\b_{A}(a'),\a_{A}(a'')]\cdot \a{^{2}}_{A}(a)\o-([\b(_{B}b),\a_{B}(b')]\cdot\a^{2}_{B}(b'')+[\b_{B}(b),\a_{B}(b'')]\cdot\a^{2}_{B}(b'))\\
 &&\hspace{-2mm}+[\b_{A}(a''),\a_{A}(a)]\cdot \a{^{2}}_{A}(a')\o-([\b_{B}(b'),\a_{B}(b'')]\cdot\a^{2}_{B}(b)+[\b_{B}(b'),\a_{B}(b)]\cdot\a^{2}_{B}(b''))\\
 &&\hspace{-2mm}-([\b(a''),\a(a)]\cdot\a^{2}_{A}(a')+[\b_{A}(a''),\a_{A}(a')]\cdot\a^{2}_{A}(a))\o [\b_{B}(b),\a_{B}(b')])\cdot \a{^{2}}_{B}(b'')\\
 &&\hspace{-2mm}-([\b_{A}(a),\a_{A}(a')]\cdot\a^{2}_{A}(a'')+[\b_{A}(a),\a_{A}(a'')]\cdot\a^{2}_{A}(a'))\o [\b_{B}(b'),\a_{B}(b'')]\cdot \a{^{2}}_{B}(b)\\
 &&\hspace{-2mm}-([\b_{A}(a'),\a_{A}(a'')]\cdot\a^{2}_{A}(a)+[\b_{A}(a'),\a_{A}(a)]\cdot\a^{2}_{A}(a''))\o [\b_{B}(b''),\a_{B}(b)]\cdot \a{^{2}}_{B}(b'))\\
 &\stackrel{(\ref{eq:4.4})}=&\hspace{-2mm}0.
 \end{eqnarray*}}
 {\bf Step 3.} We prove the compatible condition Eq.(\ref{eq:4.6}). {\small
 \begin{eqnarray*}
 &&\hspace{-5mm}[(\b_{A}\o\b_{B})(a\o b),a'\o b']\cdot(\b_{A}\o\b_{B})(a''\o b'')+(\b_{A}\o\b_{B})(a'\o b')\cdot[(\a_{A}\o\a_{B})(a\o b),a''\o b'']\\
 &&\stackrel{(\ref{eq:4.1})(\ref{eq:4.2})}=[[\b_{A}(a),a']\cdot \b_{A}(a'')\o \a_{B}\b_{B}(b)\cdot (b'\cdot b'')+\a_{A}\b_{A}(a)\cdot (a'\cdot a'')\o[\b_{B}(b), b']\cdot\b_{B}(b'')\\
 &&\hspace{10mm}+\b_{A}(a')\cdot[\a_{A}(a),a'']\o\a_{B}\b_{B}(b)\cdot (b'\cdot b'')+\a_{A}\b_{A}(a)\cdot (a'\cdot a'')\o\b_{B}(b')\cdot[\a_{B}(b), b'']\\
 &&\hspace{3mm}\stackrel{(\ref{eq:4.6})}=[\a_{A}\b_{A}(a),a'\cdot a'']\o\a_{B}\b_{B}(b)\cdot (b'\cdot b'')+\a_{A}\b_{A}(a)\cdot (a'\cdot a'')\o[\a_{B}\b_{B}(b),b'\cdot b'']\\
 &&\hspace{4mm}=[\a_{A}\b_{A}(a)\o\a_{B}\b_{B}(b),a'\cdot a''\o b'\cdot b''],
 \end{eqnarray*}}
 finishing the proof. Therefore, $(A\o B, \cdot, [,], \a_{A}\o\a_{B}, \b_{A}\o\b_{B})$ is a BP algebra.
 \end{proof}

 \begin{thm}\label{thm:5.6} Let $(A,\cdot_{A},[,]_{A},\a_{A},\b_{A})$ and $(B,\cdot_{B},[,]_{B},\a_{B},\b_{B})$ be two regular TBP algebras. Define two operations $\cdot$ and $[, ]$ on $A\o B$ by Eqs.(\ref{eq:5.12}) and (\ref{eq:5.13}) respectively. Then $(A\o B, \cdot, [,], \a_{A}\o\a_{B}, \b_{A}\o\b_{B})$ is a TBP algebra.
 \end{thm}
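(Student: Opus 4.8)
The plan is to verify, for the $5$-tuple $(A\o B,\cdot,[\cdot,\cdot],\a_A\o\a_B,\b_A\o\b_B)$, the three defining conditions of a transposed BiHom-Poisson algebra: BiHom-commutativity of $\cdot$, the BiHom-Lie axioms for $[\cdot,\cdot]$, and the transposed compatibility \eqref{eq:5.1}. The preliminary bookkeeping — that $\a_A\o\a_B$ and $\b_A\o\b_B$ commute and are multiplicative for both $\cdot$ defined by \eqref{eq:5.12} and $[\cdot,\cdot]$ defined by \eqref{eq:5.13} — is immediate, and the BiHom-commutativity of $\cdot$ is exactly Step 1 of Lemma \ref{lem:5.5}, i.e. \cite[Theorem 2.1]{LMMP2}; neither uses the compatibility and so carries over verbatim from the BP case. (Throughout I drop the subscripts $A,B$ on the operations, as in Lemma \ref{lem:5.5}.)

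For the BiHom-Lie part, BiHom-skew-symmetry \eqref{eq:4.4} is immediate from \eqref{eq:5.13} and the BiHom-skew-symmetry of $[\cdot,\cdot]_A$ and $[\cdot,\cdot]_B$. For the BiHom-Jacobi condition \eqref{eq:4.7} I would expand the cyclic sum of double brackets exactly as in Step 2 of Lemma \ref{lem:5.5} and organise it into the three groups (I), (II), (III) appearing there. Groups (I) and (II) collect the terms carrying a double bracket in one tensor factor against a triple product in the other; since the BiHom triple product $(\b(b)\cdot\a(b'))\cdot\a^2(b'')$ is totally symmetric in its three arguments — a consequence of \eqref{eq:4.1} and \eqref{eq:4.2} together with regularity, via the rearrangement $(\b(u)\cdot\a(v))\cdot\a^2(w)=(\b(u)\cdot\a(w))\cdot\a^2(v)$ — one may factor out this common symmetric product, leaving a cyclic sum of double brackets in a single factor which vanishes by \eqref{eq:4.7}. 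These arguments use only the BiHom-commutative and BiHom-Lie structures and are identical to Lemma \ref{lem:5.5}.

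The cross-term (III), which mixes a bracket in $A$ with a bracket in $B$, is the crux and is where the proof genuinely departs from the BP case. In Lemma \ref{lem:5.5} this term was killed by the BP Leibniz rule through \eqref{eq:5.19}, which is not available here. Instead I would rewrite the bracket-of-a-product factors occurring in (III) by means of the transposed compatibility \eqref{eq:5.1}, and then absorb the resulting cyclic combinations using the identity \eqref{eq:5.3} of Theorem \ref{thm:5.4} (and, if needed, the strong identity \eqref{eq:5.6}); regularity is used throughout to realign the powers of $\a$ and $\b$ so that \eqref{eq:5.1} and \eqref{eq:5.3} become applicable. I expect this to be the main obstacle: because the transposed compatibility has a different shape from the Leibniz rule, a single substitution does not eliminate the bracket-of-product factors — \eqref{eq:5.1} trades such a factor for a product-with-bracket minus another bracket-of-product — so one must exploit the cyclic antisymmetrization together with \eqref{eq:5.3} to produce the cancellation. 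This is precisely the point at which the regularity hypothesis and the identities of Theorem \ref{thm:5.4} are indispensable.

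Finally, for the transposed compatibility \eqref{eq:5.1} on $A\o B$ I would expand $2(\a\b)(X)\cdot[X',X'']$ and $[\b(X)\cdot X',\b(X'')]+[\b(X'),\a(X)\cdot X'']$ with $X=a\o b$, $X'=a'\o b'$, $X''=a''\o b''$, using \eqref{eq:5.12} and \eqref{eq:5.13}. The right-hand side expands into four simple tensors; I would pair the two whose $A$-slot is a bracket of a product, and separately the two whose $B$-slot is a bracket of a product, and apply \eqref{eq:5.1} in $A$ and in $B$ respectively. The two $A$-brackets combine to $2\a\b(a)\cdot[a',a'']$ and the two $B$-brackets to $2\a\b(b)\cdot[b',b'']$, while the accompanying products are matched by the rearrangement $(\b(b)\cdot b')\cdot\b(b'')=\b(b')\cdot(\a(b)\cdot b'')=\a\b(b)\cdot(b'\cdot b'')$ coming from \eqref{eq:4.1}, \eqref{eq:4.2} and regularity; this recovers exactly the two terms of the left-hand side. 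This last step is a clean direct computation and presents no real difficulty, so the entire weight of the theorem rests on the vanishing of the cross-term (III).
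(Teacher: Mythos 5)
Your proposal is correct and follows essentially the same route as the paper: it reduces everything to Lemma \ref{lem:5.5} except the mixed Jacobi term (III) and the compatibility \eqref{eq:5.1}, kills (III) by combining the transposed compatibility \eqref{eq:5.1} with the cyclic identity \eqref{eq:5.3} (the paper packages \eqref{eq:5.3} as Eq.\eqref{eq:a-10} and applies it before \eqref{eq:5.1}, but the ingredients and the cancellation via \eqref{eq:4.2} are the same), and verifies \eqref{eq:5.1} on $A\o B$ by exactly the pairing-and-rearrangement computation you describe. The only cosmetic difference is your hedge about possibly needing \eqref{eq:5.6}, which the paper's argument does not use.
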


 \begin{proof} Based on the proof of Lemma \ref{lem:5.5}, we only need to verify the following two assertions: (i) The expression (III) in Lemma \ref{lem:5.5} equals 0; (ii) Eq.(\ref{eq:5.1}) holds for $(A\o B, \cdot, [,], \a_{A}\o\a_{B}, \b_{A}\o\b_{B})$.

 {\bf Proof of (i).} By Eq.(\ref{eq:5.3}), we have
 \begin{eqnarray}
 &[\b(a'),\a(a'')]\cdot\a^{2}(a)=-[\b(a''),\a(a)]\cdot\a^{2}(a')
 -[\b(a),\a(a')]\cdot\b^{2}(a'').&\label{eq:a-10}
 \end{eqnarray}
 Now we denote the first three items of (III) by (IV) and the last three items of (III) by (V). Then{\small
 \begin{eqnarray*}
 (\hbox{IV})\hspace{-3mm}&\stackrel{(\ref{eq:a-10})(\ref{eq:4.4})}=&\hspace{-3mm}
 [\b_{A}(a),\a_{A}(a')]\cdot \a{^{2}}_{A}(a'')\o[\b_{B}(b)\cdot \a_{B}(b')\cdot\a{^{2}}_{B}(b'')]+\Big([\b_{A}(a''),\a_{A}(a)]\cdot\a^{2}_{A}(a')\\
 &&\hspace{-6mm}+[\b_{A}(a),\a_{A}(a')]\cdot\a{^{2}}_{A}(a'')\Big)\o[\a\b_{B}(b), \a_{B}(b'),\a{^{2}}_{B}\b_{B}^{-1}(b)]+[\b_{A}(a''),\a_{A}(a)]\cdot \a{^{2}}_{A}(a')\\
 &&\hspace{-6mm}\o[\b_{B}(b'')\cdot \a_{B}(b),\a{^{2}}_{B}(b')]\\
 &\hspace{-3mm}\stackrel{(\ref{eq:5.1})}=&\hspace{-8mm}[\b_{A}(a),\a_{A}(a')]
 \cdot\a{^{2}}_{A}(a'')\o 2\a_{B}\b_{B}(b')\cdot[\a_{B}(b),\a{^{2}}_{B}\b_{B}^{-1}(b'')]
 +[\b_{A}(a''),\a_{A}(a)]\cdot \a{^{2}}_{A}(a')\\
 &&\hspace{-6mm}\o 2\a_{B}\b_{B}(b'')\cdot[\a_{B}(b),\a{^{2}}_{B}\b_{B}^{-1}(b')].
 \end{eqnarray*}}
 Similarly,
 \begin{eqnarray*}
 (\hbox{V})\hspace{-3mm}&\stackrel{}=&\hspace{-3mm}2\a_{A}\b_{A}(a')\cdot[\a_{A}(a),\a{^{2}}_{A}\b{^{-1}}_{A}(a'')]\o[\b_{B}(b),\a_{B}(b')]\cdot \a{^{2}}_{B}(b'')+2\a_{A}\b_{A}(a'')\cdot[\a_{A}(a),\\
 &&\hspace{-3mm}\a{^{2}}_{A}\b{^{-1}}_{A}(a')]\o[\b_{B}(b''),\a_{B}(b)]\cdot \a{^{2}}_{B}(b').
 \end{eqnarray*}
 So (IV)+(V)=0 by Eq.(\ref{eq:4.2}), i.e., (III)=0, therefore $(A\o B, [,], \a, \b)$ is a BiHom-Lie algebra.

 {\bf Proof of (ii).}{\small
 \begin{eqnarray*}
 &&\hspace{-15mm}[\b(a'\o b'),\a(a\o b)\cdot(a''\o b'')]+[\b(a\o b))\cdot(a'\o b'),\b(a''\o b'')]\\
 &=&\hspace{-5mm}[\b_{A}(a'),\a_{A}(a)\cdot a'']\o\b_{B}(b')\cdot (\a_{B}(b)\cdot b'')+\b_{A}(a')\cdot(\a_{A}(a)\cdot a'')\o[\b_{B}(b'),\a_{B}(b)\cdot b'']\\
 &&\hspace{-5mm}+[\b_{A}(a)\cdot a',\b_{A}(a'')]\o(\b_{B}(b)\cdot b')\cdot \b_{B}(b'')+(\b_{A}(a)\cdot a')\cdot\b_{A}(a'')\o[\b_{B}(b)\cdot b',b_{B}(b'')]\\
 &\stackrel{(\ref{eq:4.1})(\ref{eq:4.2})}=&\hspace{-3mm}([\b_{A}(a)\cdot a',\b_{A}(a'')]+[\b_{A}(a'),\a_{A}(a)\cdot a''])\o(\b_{B}(b)\cdot b')\cdot \b_{B}(b'')+(\b_{A}(a)\cdot a')\cdot\b_{A}(a'')\\
 &&\hspace{-3mm}\o([\b_{B}(b'),\a_{B}(b)\cdot b'']+[\b_{B}(b)\cdot b',b_{B}(b'')])\\
 &\stackrel{(\ref{eq:5.1})}=&\hspace{-3mm}2\a_{A}\b_{A}(a)\cdot[\b_{A}(a'),\b_{A}(a'')]\o(\b_{B}(b)\cdot b')\cdot \b_{B}(b'')+2(\b_{A}(a)\cdot \b_{A}(a'))\cdot\b_{A}(a'')\\
 &&\hspace{-4mm}\o\a_{B}\b_{B}(b)\cdot[b_{B}(b'),b_{B}(b'')]\\
 &\stackrel{(\ref{eq:4.1})}=&\hspace{-3mm}2\a_{A}\b_{A}(a)\cdot[\b_{A}(a'),\b_{A}(a'')]\o\a_{B}\b_{B}(b)\cdot (b'\cdot b'')+2\a_{A}\b_{A}(a)\cdot (\b_{A}(a')\cdot a'')\\
 &&\hspace{-4mm}\o\a_{B}\b_{B}(b)\cdot[b_{B}(b'),b_{B}(b'')]=2\a\b(a\o b)\cdot[a'\o b',a''\o b''],
 \end{eqnarray*}}
 as desired.
 \end{proof}

\subsection{Transposed BP algebras from BiHom-Novikov-Poisson algebras} Let us recall the definition of BiHom-Novikov-Poisson algebra.
 \begin{defi}\cite{LMMP3}\label{defi:6.1} A {\bf BiHom-Novikov-Poisson algebra} is a 5-tuple $(L, \cdot, \ast, \a, \b)$ such that

 (1) $(L, \cdot, \a, \b)$ is a BiHom-commutative algebra;

 (2) $(L, \ast, \a, \b)$ is a BiHom-Novikov algebra, i.e.,
 \begin{eqnarray}
 &(\b(x)\ast\a(y))\ast\b(z)-\a\b(x)\ast(\a(y)\ast z)=(\b(y)\ast\a(x))\ast\b(z)-\a\b(y)\ast(\a(x)\ast z);&\label{eq:6.1}\\
 &(x\ast\b(y))\ast\a\b(z)=(x\ast\b(z))\ast\a\b(y);&\label{eq:6.2}
 \end{eqnarray}

 (3) the following compatibility conditions hold:
 \begin{eqnarray}
 &(\b(x)\ast\a(y))\cdot\b(z)-\a\b(x)\ast(\a(y)\cdot z)=(\b(y)\ast\a(x))\cdot\b(z)-\a\b(y)\ast(\a(x)\cdot z);&\label{eq:6.3}\\
 &(x\cdot\b(y))\ast\a\b(z)=(x\ast\b(z))\cdot\a\b(y), x, y, z\in L.&\label{eq:6.4}
 \end{eqnarray}

 A BiHom-Novikov-Poisson algebra $(L, \cdot, \ast, \a, \b)$ is {\bf regular} if $\a, \b$ are invertible.
 \end{defi}

 \begin{rmk} The 4-tuple $(L, \ast, \a, \b)$ is called a {\bf BiHom-pre-Lie algebra} if Eq.(\ref{eq:6.1}) holds.
 \end{rmk}

 \begin{thm}\label{thm:6.2} Let $(L, \cdot, \ast, \a, \b)$ be a regular BiHom-Novikov-Poisson algebra. Define
 \begin{eqnarray}
 &[x,y]=x\ast y-\a^{-1}\b(y)\ast\a\b^{-1}(x),\ x,y\in L.&\label{eq:6.5}
 \end{eqnarray}
 Then $(L, \cdot, [,], \a, \b)$ is a TBP algebra.
 \end{thm}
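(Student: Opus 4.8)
The plan is to verify the three defining axioms of a TBP algebra for the 5-tuple $(L, \cdot, [,], \a, \b)$, where $\cdot$ is the given commutative product and $[,]$ is defined by Eq.(\ref{eq:6.5}). Since $(L, \cdot, \a, \b)$ is already BiHom-commutative by hypothesis (part (1) of Definition \ref{defi:6.1}), that axiom comes for free. So I would reduce the work to two tasks: (I) show that $(L, [,], \a, \b)$ is a BiHom-Lie algebra, i.e. verify BiHom-skew-symmetry Eq.(\ref{eq:4.4}) and the BiHom-Jacobi condition Eq.(\ref{eq:4.7}); and (II) verify the transposed compatibility Eq.(\ref{eq:5.1}). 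Throughout I would use that $\a, \b$ are invertible (regularity), that they are algebra maps for both $\cdot$ and $\ast$, and that they commute, so powers of $\a$ and $\b$ can be freely moved across products and brackets.

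For task (I), BiHom-skew-symmetry is the easy part: I would compute $[\b(x), \a(y)] = \b(x)\ast\a(y) - \a^{-1}\b\a(y)\ast\a\b^{-1}\b(x) = \b(x)\ast\a(y) - \b(y)\ast\a(x)$ using commutativity of $\a, \b$, and observe this is manifestly antisymmetric in $x, y$, giving Eq.(\ref{eq:4.4}). The BiHom-Jacobi condition is the genuine content. The natural strategy is to expand $\circlearrowleft_{x,y,z}[[\b(x),\a(y)],\a^2(z)]$ entirely in terms of the $\ast$-product using Eq.(\ref{eq:6.5}), producing a cyclic sum of six triple-$\ast$-products (with various $\a, \b$ decorations). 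I would then rewrite each triple product into left-normed form and collapse terms using the BiHom-Novikov left-symmetry Eq.(\ref{eq:6.1}) (which says the associator $(\b(x)\ast\a(y))\ast\b(z)-\a\b(x)\ast(\a(y)\ast z)$ is symmetric in $x,y$) together with the right-commutativity Eq.(\ref{eq:6.2}). This is exactly the standard computation showing that the commutator of a (BiHom-)Novikov algebra is a (BiHom-)Lie algebra, and I expect the six terms to cancel in cyclic pairs.

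For task (II), the verification of Eq.(\ref{eq:5.1}), I would again substitute Eq.(\ref{eq:6.5}) into both sides. The right-hand side $[\b(x)\cdot y,\b(z)]+[\b(y),\a(x)\cdot z]$ becomes, after expanding the brackets via $\ast$, a combination of terms each mixing one $\cdot$ and one $\ast$; the crucial inputs here are the two cross-compatibility conditions Eqs.(\ref{eq:6.3}) and (\ref{eq:6.4}) linking $\cdot$ and $\ast$, used to convert expressions of the form $(\cdots\ast\cdots)\cdot(\cdots)$ into $(\cdots)\cdot(\cdots\ast\cdots)$ and vice versa, plus BiHom-commutativity Eq.(\ref{eq:4.2}) to rearrange the $\cdot$-factors. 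The target left-hand side $2\a\b(x)\cdot[y,z]=2\a\b(x)\cdot(y\ast z - \a^{-1}\b(z)\ast\a\b^{-1}(y))$ tells me which two surviving terms I am aiming for, so I would steer the reduction toward those.

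The main obstacle I anticipate is task (II): correctly tracking the $\a, \b$ (and their inverse) decorations so that the compatibility identities Eqs.(\ref{eq:6.3}), (\ref{eq:6.4}) apply with the exact twists required, and arranging the four expanded terms so that two cancel and two combine into $2\a\b(x)\cdot[y,z]$. The Jacobi identity in task (I), while lengthy, is a routine adaptation of the classical Novikov-implies-Lie argument, whereas the transposed compatibility genuinely forces the interaction between $\cdot$ and $\ast$ through Eqs.(\ref{eq:6.3}) and (\ref{eq:6.4}) and is where a bookkeeping error is most likely. Regularity (invertibility of $\a,\b$) is essential throughout, since the definition of $[,]$ itself involves $\a^{-1}$ and $\b^{-1}$.
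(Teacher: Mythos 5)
Your proposal is correct, and on the theorem's main content---verifying the transposed compatibility \eqref{eq:5.1}---it coincides with the paper's proof: expand $[,]$ via \eqref{eq:6.5}, then reduce with BiHom-commutativity \eqref{eq:4.2} and the cross-compatibilities \eqref{eq:6.3}--\eqref{eq:6.4}. Where you genuinely diverge is the BiHom-Lie part: the paper disposes of it in one line by citing \cite[Proposition 3.4]{LMMP6}, while you propose to re-prove it via the classical pre-Lie-commutator argument; that is viable (it is exactly what the cited proposition establishes) and buys self-containedness, at the cost of the heaviest bookkeeping in the whole proof. Two refinements would align your plan with the paper. First, the right-commutativity \eqref{eq:6.2} is not needed anywhere---the BiHom-Jacobi identity uses only the BiHom-pre-Lie identity \eqref{eq:6.1}---and the paper exploits precisely this observation in Corollary \ref{cor:6.10} to upgrade the theorem to BiHom-pre-Lie Poisson algebras; so if your cancellation scheme were to need \eqref{eq:6.2} in an essential way, that would signal a bookkeeping slip. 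Second, the paper first converts \eqref{eq:6.4} into the equivalent form \eqref{eq:6.8}, namely $\a(x)\cdot(y\ast z)=(x\cdot y)\ast \b(z)$ (by \cite[Lemma 3.2]{LMMP3}); this is the form in which the identity is actually applied, and adopting it will make your four expanded terms collapse to $2\a\b(x)\cdot[y,z]$ much more transparently.
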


 \begin{proof} By \cite[Proposition 3.4]{LMMP6}, $(L,[,],\a,\b)$ is a BiHom-Lie algebra. So we only need to prove Eq.(\ref{eq:5.1}). By \cite[Lemma 3.2]{LMMP3}, Eq.(\ref{eq:6.4}) is equivalent to
 \begin{eqnarray}
 &\a(x)\cdot(y\ast z)=(x\cdot y)\ast \b(z), x, y, z\in L. \label{eq:6.8}
 \end{eqnarray}
 Then for all $x, y, z\in L$, we have
 \begin{eqnarray*}
 &&\hspace{-15mm}[\b(x)\cdot y,\b(z)]+[\b(y),\a(x)\cdot z]-2\a\b(x)\cdot[y,z]\\
 &=&\hspace{-4mm}(\b(x)\cdot y)\ast\b(z)-\a^{-1}\b^{2}(z)\ast(\a(x)\cdot \a\b^{-1}(y))+\b(y)\ast(\a(x)\cdot z)-(\b(x)\cdot \a^{-1}\b(z))\ast\a(y)\\
 &&\hspace{-4mm}-2\a\b(x)\cdot(y\ast z-\a^{-1}\b(z)\ast\a\b^{-1}(y))\\
 &\stackrel{(\ref{eq:4.2})}=&\hspace{-4mm}((\b(x)\cdot y)\ast\b(z)-\a\b(x)\cdot(y\ast z)+\a\b(x)\cdot(\a^{-1}\b(z)\ast\a\b^{-1}(y))-(\b(x)\cdot \a^{-1}\b(z))\ast\a(y))\\
 &&\hspace{-4mm}-(\a^{-1}\b(y)\ast \a^{-1}\b(z))\cdot\a^{2}(x)-(\a^{-2}\b^{2}(z)\ast y)\cdot\a^{2}(x)-\b(y)\ast(\a^{-1}\b(z)\cdot \a^{2}\b^{-1}(x))\\
 &&\hspace{-4mm}+\a^{-1}\b^{2}(z)\ast(y\cdot \a^{2}\b^{-1}(x))\\
 &\stackrel{(\ref{eq:6.3})(\ref{eq:6.8})}=&\hspace{-3mm}0,
 \end{eqnarray*}
 finishing the proof.
 \end{proof}

\subsection{Three algebraic structures related to TBP algebras}
 \begin{defi}\label{de:6.5} A {\bf BiHom-pre-Lie commutative algebra} is a 5-tuple $(L, \cdot, \ast, \a, \b)$, where $(L, \cdot, \a, \b)$ is a BiHom-commutative algebra and $(L, \ast, \a, \b)$ is a BiHom-pre-Lie algebra satisfying
 \begin{eqnarray}
 &\a\b(x)\ast(\a(y)\cdot z)=(\b(x)\ast \a(y))\cdot \b(z)+\a\b(y)\cdot(\a(x)\ast z),\ x, y, z\in L.&\label{eq:6.6}
 \end{eqnarray}
 \end{defi}

 \begin{rmk}
 When $\a=\b=\id$ in Definition \ref{de:6.5}, we cover the pre-Lie commutative algebra in \cite{BBGW}.
 \end{rmk}

 \begin{pro}\label{pro:de:6.5} Let $(L, \cdot, \ast)$ be a pre-Lie commutative algebra, $\a, \b: L\lr L$ be two commuting linear maps such that $\a(x\cdot y)=\a(x)\cdot \a(y), \b(x\cdot y)=\b(x)\cdot \b(y), \a(x\ast y)=\a(x)\ast \a(y), \b(x\ast y)=\b(x)\ast \b(y)$. Then $(L, \cdot'=\cdot\circ (\a\o \b), \ast'=\ast \circ (\a\o \b),\a,\b)$ is a BiHom-pre-Lie commutative algebra, called the ``Yau twist" of $(L, \cdot, \ast)$.
 \end{pro}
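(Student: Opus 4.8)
The plan is to verify the three defining ingredients of Definition \ref{de:6.5} for the 5-tuple $(L,\cdot',\ast',\a,\b)$, reducing each to the corresponding untwisted identity of the pre-Lie commutative algebra $(L,\cdot,\ast)$. Since $(L,\cdot)$ is commutative associative and $\a,\b$ are commuting maps that are multiplicative for $\cdot$ and satisfy Eq.(\ref{eq:1.2}), Remark \ref{rmk:de:4.1}(2) gives at once that $(L,\cdot',\a,\b)$ is a BiHom-commutative algebra; this settles the first ingredient with no computation.

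For the second ingredient I would check that Eq.(\ref{eq:6.1}) holds for $\ast'$, where $x\ast' y=\a(x)\ast\b(y)$. Using repeatedly that $\a,\b$ commute and are multiplicative for $\ast$, the left-hand side of Eq.(\ref{eq:6.1}) collapses to $(\a^{2}\b(x)\ast\a^{2}\b(y))\ast\b^{2}(z)-\a^{2}\b(x)\ast(\a^{2}\b(y)\ast\b^{2}(z))$. Applying the untwisted pre-Lie identity (Eq.(\ref{eq:6.1}) with $\a=\b=\id$) to the triple $(\a^{2}\b(x),\a^{2}\b(y),\b^{2}(z))$ turns this into the $x\leftrightarrow y$ symmetric expression, which is exactly the reduced form of the right-hand side; hence $(L,\ast',\a,\b)$ is a BiHom-pre-Lie algebra.

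For the third ingredient I would verify the compatibility Eq.(\ref{eq:6.6}) for the pair $(\cdot',\ast')$. Pushing the structure maps through the left-hand side $\a\b(x)\ast'(\a(y)\cdot' z)$ as above yields $\a^{2}\b(x)\ast(\a^{2}\b(y)\cdot\b^{2}(z))$, and the untwisted compatibility condition (Eq.(\ref{eq:6.6}) with $\a=\b=\id$), applied to the same triple, rewrites it as $(\a^{2}\b(x)\ast\a^{2}\b(y))\cdot\b^{2}(z)+\a^{2}\b(y)\cdot(\a^{2}\b(x)\ast\b^{2}(z))$. Expanding the two summands on the right-hand side of Eq.(\ref{eq:6.6}) for the twisted operations produces precisely these two terms, so the identity holds.

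The only real difficulty is bookkeeping: one must track the compositions of $\a$ and $\b$ carefully so that, after using commutativity and multiplicativity, every term of each side lands on the common normal form in $\a^{2}\b(x),\a^{2}\b(y),\b^{2}(z)$. Once the endomorphisms are pushed to the leaves of each expression, the three conditions follow directly from their $\a=\b=\id$ counterparts, and no genuinely new identity is required.
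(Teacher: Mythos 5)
Your proposal is correct and follows essentially the same route as the paper: both arguments reduce the three conditions of Definition \ref{de:6.5} to their untwisted counterparts by pushing $\a,\b$ through to the common normal form in $\a^{2}\b(x),\a^{2}\b(y),\b^{2}(z)$, with BiHom-commutativity of $\cdot'$ supplied by Remark \ref{rmk:de:4.1}. The only difference is cosmetic: where you verify Eq.\eqref{eq:6.1} for $\ast'$ by direct computation, the paper simply cites \cite[Proposition 3.2]{LMMP2} for the BiHom-pre-Lie part, and its verification of Eq.\eqref{eq:6.6} is exactly the calculation you describe.
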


 \begin{proof} By \cite[Proposition 3.2]{LMMP2}, $(L, \cdot'=\cdot\circ (\a\o \b), \ast'=\ast \circ (\a\o \b),\a,\b)$ is a BiHom-pre-Lie algebra. Furthermore, we have
 \begin{eqnarray*}
 (\b(x)\ast' \a(y))\cdot' \b(z)+\a\b(y)\cdot'(\a(x)\ast' z)\hspace{-2mm}
 &=&\hspace{-2mm}(\a^{2}\b(x)\ast \a^{2}\b(y))\cdot \b^{2}(z)+\a^{2}\b(y)\cdot(\a^{2}\b(x)\ast \b^{2}(z))\\
 &=&\hspace{-2mm}\a\b(x)\ast'(\a(y)\cdot' z).
 \end{eqnarray*}
 Therefore, $(L, \cdot'=\cdot\circ (\a\o \b), \ast'=\ast \circ (\a\o \b),\a,\b)$ is a BiHom-pre-Lie commutative algebra by Remark \ref{rmk:de:4.1}.
 \end{proof}

 \begin{pro}\label{pro:6.6} Let $(L, \mu)$ be a commutative associative algebra, $\a, \b : L\lr L$ be two algebra morphisms and $D: L\lr L $ be a derivation such that any two of the maps $\a, \b, D$ commute. Then $(L, \cdot, \ast, \a, \b)$ is a BiHom-pre-Lie commutative algebra, where $x\ast y=\a(x)D\b(y)$ and $x\cdot y=\a(x)\b(y)$, $\forall~~x,y\in L$.
 \end{pro}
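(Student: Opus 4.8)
The plan is to verify the three clauses of Definition~\ref{de:6.5} for the 5-tuple $(L,\cdot,\ast,\a,\b)$, writing the associative product $\mu$ by juxtaposition throughout and using the standing hypotheses $\a\b=\b\a$, $\a D=D\a$, $\b D=D\b$, the algebra-morphism property of $\a,\b$, and the Leibniz rule for $D$.

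First I would dispose of the BiHom-commutativity of $(L,\cdot,\a,\b)$ immediately. Since $(L,\mu)$ is commutative associative and $\a,\b$ are commuting algebra morphisms, they satisfy Eq.~(\ref{eq:1.2}), and $x\cdot y=\a(x)\b(y)=\mu\circ(\a\o\b)(x\o y)$; hence $(L,\cdot,\a,\b)$ is BiHom-commutative directly by Remark~\ref{rmk:de:4.1}(2).

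The most economical route for the two remaining clauses is to recognise the whole structure as a Yau twist. Put $x\cdot_{0}y:=xy$ and $x\ast_{0}y:=xD(y)$. Then $(L,\cdot_{0},\ast_{0})$ is the classical pre-Lie commutative algebra associated with a commutative associative algebra carrying a derivation: the left-symmetry of $\ast_{0}$ and the $\a=\b=\id$ case of Eq.~(\ref{eq:6.6}) both collapse to the Leibniz rule together with commutativity. Because $\a,\b$ commute with each other and with $D$ and are algebra morphisms, they are also morphisms for $\ast_{0}$, as $\a(xD(y))=\a(x)D\a(y)$ and likewise for $\b$. Since $\cdot=\cdot_{0}\circ(\a\o\b)$ and $\ast=\ast_{0}\circ(\a\o\b)$, the tuple $(L,\cdot,\ast,\a,\b)$ is exactly the Yau twist of $(L,\cdot_{0},\ast_{0})$, and the claim follows from Proposition~\ref{pro:de:6.5}.

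Alternatively one verifies Eqs.~(\ref{eq:6.1}) and~(\ref{eq:6.6}) by hand, and this is the only place where care is needed. Expanding $x\ast y=\a(x)D\b(y)$ and pushing every map inward, I expect
\[
(\b(x)\ast\a(y))\ast\b(z)=\a^{2}\b(x)\,D\a^{2}\b(y)\,D\b^{2}(z),
\]
\[
\a\b(x)\ast(\a(y)\ast z)=\a^{2}\b(x)\,D\a^{2}\b(y)\,D\b^{2}(z)+\a^{2}\b(x)\,\a^{2}\b(y)\,D^{2}\b^{2}(z),
\]
so the left-hand side of Eq.~(\ref{eq:6.1}) reduces to $-\a^{2}\b(x)\,\a^{2}\b(y)\,D^{2}\b^{2}(z)$, which is symmetric in $x$ and $y$; interchanging $x$ and $y$ produces the right-hand side, giving Eq.~(\ref{eq:6.1}). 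For Eq.~(\ref{eq:6.6}) the same bookkeeping makes both sides equal to $\a^{2}\b(x)\,D\a^{2}\b(y)\,\b^{2}(z)+\a^{2}\b(x)\,\a^{2}\b(y)\,D\b^{2}(z)$, after applying the Leibniz rule to $D\b(\a(y)\cdot z)$ and using commutativity to match the second summand. The main (and only) obstacle is the disciplined commuting of $\a,\b,D$ through the nested products, so I would present the Yau-twist argument and relegate this direct computation to a remark.
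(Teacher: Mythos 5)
Your proof is correct, and your main (Yau-twist) route is genuinely different from the paper's. The paper proves the proposition directly: it cites \cite[Corollary 2.10]{LMMP3} to conclude that $(L,\ast,\a,\b)$ is a BiHom-pre-Lie algebra, then verifies Eq.~(\ref{eq:6.6}) by the same two-line Leibniz computation you give as your ``alternative'' (both sides equal $\a^{2}\b(x)D\a^{2}\b(y)\b^{2}(z)+\a^{2}\b(x)\a^{2}\b(y)D\b^{2}(z)$), and gets BiHom-commutativity from Remark~\ref{rmk:de:4.1}. You instead factor the whole structure through the untwisted case: $(L,\cdot_{0},\ast_{0})$ with $x\ast_{0}y=xD(y)$ is the classical pre-Lie commutative algebra attached to a commutative associative algebra with a derivation, $\a,\b$ are morphisms for $\ast_{0}$ precisely because they commute with $D$, and $\cdot=\cdot_{0}\circ(\a\o\b)$, $\ast=\ast_{0}\circ(\a\o\b)$, so Proposition~\ref{pro:de:6.5} applies verbatim. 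This is a clean reduction: it avoids the external citation to \cite{LMMP3} entirely and reuses the Yau-twist result the paper itself proves immediately before this proposition, at the cost of having to check (as you correctly sketch) that left-symmetry and the compatibility identity for $\ast_{0}$ collapse to the Leibniz rule plus commutativity. The paper's route, by contrast, exhibits the twisted identities explicitly and does not depend on recognizing the operations as twists. Both arguments are complete; your hand computations of Eqs.~(\ref{eq:6.1}) and~(\ref{eq:6.6}) are also correct, with Eq.~(\ref{eq:6.1}) reducing to the $x\leftrightarrow y$-symmetric term $-\a^{2}\b(x)\a^{2}\b(y)D^{2}\b^{2}(z)$ exactly as you state.
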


 \begin{proof} By \cite[Corollary 2.10]{LMMP3}, $(L, \ast, \a, \b)$ is a BiHom-pre-Lie algebra. Further,
 \begin{eqnarray*}
 \a\b(x)\ast(\a(y)\cdot z)&=&\a^{2}\b(x)\a^{2}\b(y)D\b^{2}(z)+\a^{2}\b(x)D\a^{2}\b(y)\b^{2}(z)\\
 &=&(\b(x)\ast \a(y))\cdot \b(z)+\a\b(y)\cdot(\a(x)\ast z).
 \end{eqnarray*}
 Therefore, $(L, \cdot, \ast, \a, \b)$ is a BiHom-pre-Lie commutative algebra by Remark \ref{rmk:de:4.1}.
 \end{proof}

 \begin{defi}\label{de:6.7} A {\bf differential BiHom-Novikov-Poisson algebra} is a 5-tuple $(L, \cdot, \ast, \a, \b)$ such that (1) $(L, \cdot, \a, \b)$ is a BiHom-commutative algebra; (2) $(L, \ast, \a, \b)$ is a BiHom-Novikov algebra; (3) Eqs.(\ref{eq:6.4}) and (\ref{eq:6.6}) hold.
 \end{defi}

 \begin{rmk}\label{rmk:de:6.7} (1) When $\a=\b=\id$ in Definition \ref{de:6.7}, we call $(L, \cdot, \ast)$ a {\bf differential Novikov-Poisson algebra}.

 (2) The differential Novikov-Poisson algebra here is slightly different from the differential Novikov-Poisson algebra introduced in \cite{BCZ} and the former can induce the later. In fact, for all $x, y, z\in L$, we have
 \begin{eqnarray*}
 &&\hspace{-15mm}(\b(x)\ast\a(y))\cdot\b(z)-\a\b(x)\ast(\a(y)\cdot z)-(\b(y)\ast\a(x))\cdot\b(z)+\a\b(y)\ast(\a(x)\cdot z)\\
 &\stackrel{(\ref{eq:6.6})}=&(\b(x)\ast\a(y))\cdot\b(z)-(\b(x)\ast\a(y))\cdot\b(z)-\a\b(y)\cdot(\a(x)\ast z)\\
 &&-(\b(y)\ast\a(x))\cdot\b(z)+(\b(y)\ast\a(x))\cdot\b(z)+\a\b(x)\cdot(\a(y)\ast z)\\
 &\stackrel{(\ref{eq:6.8})}=&0.
 \end{eqnarray*}
 Then Eq.(\ref{eq:6.3}) holds.

 (3) By \cite[Proposition 3.5]{LMMP3} and Proposition \ref{pro:de:6.5}, the ``Yau twist" for differential Novikov-Poisson algebras holds.
 \end{rmk}

 \begin{defi}\label{de:6.8} A {\bf BiHom-pre-Lie Poisson algebra} is a 5-tuple $(L, \cdot, \ast, \a, \b)$, where $L$ is a vector space and $\cdot, \ast$ are two bilinear operations on $L$ such that (1) $(L, \cdot, \a, \b)$ is a BiHom-commutative algebra; (2) $(L, \ast, \a, \b)$ is a BiHom-pre-Lie algebra; (3) Eqs. (\ref{eq:6.3}) and (\ref{eq:6.4}) hold.
 
 \end{defi}

 \begin{rmk}(1) It is obvious that a BiHom-Novikov-Poisson algebra is a BiHom-pre-Lie Poisson algebra.

 (2) By \cite[Proposition 3.5]{LMMP3}, the ``Yau twist" for BiHom-pre-Lie Poisson algebra still holds.

 (3) By Remark \ref{rmk:de:6.7}, a BiHom-pre-Lie-commutative algebra $(L, \cdot, \ast, \a, \b)$ satisfying Eq.(\ref{eq:6.4}) is a BiHom-pre-Lie Poisson algebra;

 (4) A differential BiHom-Novikov-Poisson algebra is a BiHom-pre-Lie-commutative algebra satisfying Eqs.(\ref{eq:6.2}) and (\ref{eq:6.4}), hence is a BiHom-pre-Lie Poisson algebra.
 \end{rmk}

 \begin{cor}\label{cor:6.10} Let $(L, \cdot, \ast, \a, \b)$ be a regular BiHom-pre-Lie Poisson algebra. Then $(L, \cdot, [,], \a, \b)$ is a TBP algebra, where the operation $[,]$ is defined by Eq.(\ref{eq:6.5}).
 \end{cor}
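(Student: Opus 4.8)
The plan is to reduce this to the proof of Theorem \ref{thm:6.2} by isolating exactly which axioms are used there. A regular BiHom-pre-Lie Poisson algebra differs from a regular BiHom-Novikov-Poisson algebra only in that the product $\ast$ is required to be merely BiHom-pre-Lie (Eq.(\ref{eq:6.1})) rather than BiHom-Novikov; that is, the right-commutativity condition Eq.(\ref{eq:6.2}) is dropped, while the remaining axioms --- BiHom-commutativity of $\cdot$, Eq.(\ref{eq:6.1}), and the compatibility conditions Eqs.(\ref{eq:6.3}) and (\ref{eq:6.4}) --- are retained (Definition \ref{de:6.8}). So I would verify that the argument of Theorem \ref{thm:6.2} goes through verbatim under these weaker hypotheses.

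First I would establish the BiHom-Lie structure on $(L, [,], \a, \b)$. The bracket Eq.(\ref{eq:6.5}) is the standard BiHom-commutator, and by \cite[Proposition 3.4]{LMMP6} every BiHom-pre-Lie algebra yields a BiHom-Lie algebra under this bracket. Since only Eq.(\ref{eq:6.1}) is invoked there, and it holds by hypothesis, $(L, [,], \a, \b)$ is a BiHom-Lie algebra. Combined with the BiHom-commutativity of $(L, \cdot, \a, \b)$, this settles every structural requirement of Definition \ref{de:5.1} except the transposed compatibility Eq.(\ref{eq:5.1}).

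Next I would verify Eq.(\ref{eq:5.1}). As in Theorem \ref{thm:6.2}, I would first rewrite Eq.(\ref{eq:6.4}) in the equivalent form Eq.(\ref{eq:6.8}) via \cite[Lemma 3.2]{LMMP3} --- an equivalence that uses only invertibility of $\a, \b$, not Eq.(\ref{eq:6.2}). Then the computation of $[\b(x)\cdot y,\b(z)]+[\b(y),\a(x)\cdot z]-2\a\b(x)\cdot[y,z]$ performed in the proof of Theorem \ref{thm:6.2} expands the bracket by Eq.(\ref{eq:6.5}), applies BiHom-commutativity Eq.(\ref{eq:4.2}), and collapses to $0$ using only Eqs.(\ref{eq:6.3}) and (\ref{eq:6.8}). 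Since none of these steps touches Eq.(\ref{eq:6.2}), the identical string of equalities proves Eq.(\ref{eq:5.1}) here.

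The main point to be careful about is precisely this bookkeeping: confirming that neither \cite[Proposition 3.4]{LMMP6}, nor the equivalence \cite[Lemma 3.2]{LMMP3}, nor the explicit expansion in Theorem \ref{thm:6.2} secretly depends on the BiHom-Novikov identity Eq.(\ref{eq:6.2}). Once this is checked, the result is immediate --- it is exactly the assertion that Theorem \ref{thm:6.2} remains valid under the a priori weaker BiHom-pre-Lie Poisson hypotheses, which is why it is phrased as a corollary.
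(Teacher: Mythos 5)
Your proposal is correct and takes essentially the same route as the paper: the paper's own proof consists precisely of the observation that neither the proof of \cite[Proposition 3.4]{LMMP6} (giving the BiHom-Lie structure from the bracket Eq.(\ref{eq:6.5})) nor the computation in Theorem \ref{thm:6.2} (giving Eq.(\ref{eq:5.1}) from Eqs.(\ref{eq:6.3}) and (\ref{eq:6.8})) ever uses the Novikov condition Eq.(\ref{eq:6.2}), so everything applies verbatim to a regular BiHom-pre-Lie Poisson algebra. Your write-up just makes this bookkeeping explicit.
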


 \begin{proof} From the proof of \cite[Proposition 3.4]{LMMP6} and Theorem \ref{thm:6.2}, we find that Eq.(\ref{eq:6.2}) is not used. The rest is obvious.
 \end{proof}

 By \cite[Theorem 2.1]{LMMP2}, we have:
 \begin{pro}\label{pro:6.11} Suppose that $(A, \cdot, \ast, \a_{A}, \b_{A})$ and $(B, \cdot, \ast, \a_{B}, \b_{B})$ are two BiHom-pre-Lie Poisson algebras. Define two bilinear operations $\cdot$ and $\ast$ on $A\o B$ by Eq.(\ref{eq:5.12}) and
 \begin{eqnarray}
 &(a\o b)\ast(a'\o b')=a\ast a'\o b\cdot b'+a\cdot a'\o b\ast b',&\label{eq:6.7}
 \end{eqnarray}
 respectively, where $a, a'\in A,\ b, b'\in B$. Then $(A\o B, \cdot, \ast, \a_{A}\o\a_{B}, \b_{A}\o\b_{B})$ is a BiHom-pre-Lie Poisson algebra.
 \end{pro}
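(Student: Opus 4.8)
The plan is to check the three defining conditions of a BiHom-pre-Lie Poisson algebra (Definition \ref{de:6.8}) for $(A\o B, \cdot, \ast, \a_A\o\a_B, \b_A\o\b_B)$, writing $\a,\b$ for the structure maps $\a_A\o\a_B,\b_A\o\b_B$. Condition (1)---that $(A\o B, \cdot, \a_A\o\a_B, \b_A\o\b_B)$ is BiHom-commutative---is immediate from \cite[Theorem 2.1]{LMMP2}, exactly as in Step 1 of Lemma \ref{lem:5.5}. What remains is condition (2), the BiHom-pre-Lie identity Eq.(\ref{eq:6.1}) for $\ast$, and condition (3), the compatibility identities Eqs.(\ref{eq:6.3}) and (\ref{eq:6.4}). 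The common mechanism is that the tensor bracket Eq.(\ref{eq:6.7}) trades one $\ast$ for a $\cdot$ when passing between the factors, so each defining expression on $A\o B$ expands into a sum whose summands separate, according to the operation type carried by the $A$-slots, into pure-$\ast$, pure-$\cdot$, and mixed families.

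For condition (2) I would put $u=a\o b$, $v=a'\o b'$, $w=a''\o b''$ and substitute Eq.(\ref{eq:6.7}) into the two triple products $(\b(u)\ast\a(v))\ast\b(w)$ and $\a\b(u)\ast(\a(v)\ast w)$. Each expands into four summands, giving eight terms in the pre-Lie associator $E(u,v,w)=(\b(u)\ast\a(v))\ast\b(w)-\a\b(u)\ast(\a(v)\ast w)$. These eight terms group into four pairs according to the pattern carried by the $A$-factor. The pair with $\ast$ on both $A$-slots has equal $\cdot$-products on the $B$-factor by BiHom-associativity Eq.(\ref{eq:4.1}), and its $A$-part is precisely the $A$-pre-Lie associator, hence symmetric in $a\leftrightarrow a'$ by Eq.(\ref{eq:6.1}) for $A$; the pair with $\cdot$ on both $A$-slots is symmetric by the mirror argument with the roles of $A$ and $B$ exchanged; and the two mixed pairs have their complementary $\cdot/\ast$ factors identified by Eq.(\ref{eq:6.4}) in the equivalent form Eq.(\ref{eq:6.8}), leaving an $A$- (respectively $B$-) expression of the shape occurring in Eq.(\ref{eq:6.3}), which is symmetric in its first two arguments by Eq.(\ref{eq:6.3}) for that factor. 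Since the swapped copies of the common factors agree by BiHom-commutativity Eq.(\ref{eq:4.2}), every group is symmetric under $u\leftrightarrow v$, whence $E(u,v,w)=E(v,u,w)$, which is Eq.(\ref{eq:6.1}) for $A\o B$.

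Condition (3) is of the same nature but shorter. For Eq.(\ref{eq:6.4}), $(U\cdot\b(V))\ast\a\b(W)=(U\ast\b(W))\cdot\a\b(V)$, I would expand both sides with Eqs.(\ref{eq:5.12}) and (\ref{eq:6.7}); each side is a sum of two terms, and the terms match after applying Eq.(\ref{eq:6.4}) on the relevant factor and rearranging the triple $\cdot$-products on the other factor by Eqs.(\ref{eq:4.1}) and (\ref{eq:4.2}). For Eq.(\ref{eq:6.3}) I would expand $G(U,V,W)=(\b(U)\ast\a(V))\cdot\b(W)-\a\b(U)\ast(\a(V)\cdot W)$ into four terms; these split into two pairs (one with $\cdot\cdot$ on the $B$-factor, one with $\cdot\cdot$ on the $A$-factor), each pair reducing---via Eq.(\ref{eq:4.1}) on the pure-$\cdot$ factor and Eq.(\ref{eq:6.3}) on the other---to an expression symmetric in $U\leftrightarrow V$, again using Eq.(\ref{eq:4.2}) for the common factors.

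The main obstacle is condition (2): the BiHom-pre-Lie identity produces the largest expansion (eight terms whose pairing must be tracked carefully), and it is the only place where all of Eqs.(\ref{eq:4.1}), (\ref{eq:4.2}), (\ref{eq:6.3}), and (\ref{eq:6.4}) are needed simultaneously in order to cancel the mixed terms. I would stress that the second BiHom-Novikov identity Eq.(\ref{eq:6.2}) is never invoked anywhere in the argument, which is precisely why closure under tensor products already holds at the level of the weaker BiHom-pre-Lie Poisson structure rather than only for BiHom-Novikov-Poisson algebras.
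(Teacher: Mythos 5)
Your strategy is the right one, and it is worth pointing out that the paper itself offers no computation here: Proposition \ref{pro:6.11} is stated as an immediate consequence of \cite[Theorem 2.1]{LMMP2} (the tensor-product theorem for BiHom-Novikov-Poisson algebras), the implicit point being exactly the observation you make at the end, namely that Eq.(\ref{eq:6.2}) is never used in such a verification. Your direct check is therefore a genuinely different, self-contained route. The expansion of the pre-Lie associator on $A\o B$ into eight terms, grouped into the pure-$\ast$ pair, the pure-$\cdot$ pair, and two mixed pairs, is the correct mechanism; your verifications of Eqs.(\ref{eq:6.3}) and (\ref{eq:6.4}) for $A\o B$ are also fine as sketched, since they use only Eqs.(\ref{eq:4.1}), (\ref{eq:4.2}) and the corresponding identity on one factor.

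There is, however, one genuine gap, located exactly where you place the main difficulty: the mixed pairs in condition (2). You identify the complementary factors of these pairs ``by Eq.(\ref{eq:6.4}) in the equivalent form Eq.(\ref{eq:6.8})'', i.e.\ you use $(\b_{B}(b)\cdot\a_{B}(b'))\ast\b_{B}(b'')=\a_{B}\b_{B}(b)\cdot(\a_{B}(b')\ast b'')$. But the equivalence of Eq.(\ref{eq:6.4}) and Eq.(\ref{eq:6.8}) — the paper's citation of \cite[Lemma 3.2]{LMMP3}, invoked in the proof of Theorem \ref{thm:6.2}, where the algebra is \emph{regular} — requires invertible structure maps: Eq.(\ref{eq:6.4}) only constrains products whose second $\cdot$-slot lies in the image of $\b$ and whose $\ast$-slot lies in the image of $\a\b$, so without surjectivity it cannot be transported to the instance you need, whose decorations are $(\b(\cdot)\cdot\a(\cdot))\ast\b(\cdot)$. (When $\a=\b=\id$ the passage is harmless, since plain commutativity gives $x\cdot(y\ast z)=(y\ast z)\cdot x=(y\cdot x)\ast z=(x\cdot y)\ast z$; in the BiHom setting this argument breaks down.) Proposition \ref{pro:6.11} as stated carries no regularity hypothesis, so as a proof of the literal statement your argument is incomplete at this one step, and the cancellation of the mixed terms really does need Eq.(\ref{eq:6.8}) on each factor. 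The repair is either to add regularity to the hypotheses — consistent with the paper's other tensor-product results, Lemma \ref{lem:5.5} and Theorem \ref{thm:5.6}, which both assume regular algebras — or to take Eq.(\ref{eq:6.8}) as the compatibility axiom; with that proviso your proof is complete.
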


\section{TBP 3-Lie algebra and 3-BiHom-Lie algebras}\label{se:tbp3} In this section, we discuss the properties of (T)BP 3-Lie algebras.

\subsection{3-BiHom-Lie algebras from TBP algebras}

 \begin{lem}\label{lem:5.4-1} Let $(L, \cdot, [,], \a, \b)$ be a TBP algebra. Then the following identities hold:
 \begin{eqnarray}
 &\hspace{-40mm}[\a^{p}\b^{q+2}(y)\cdot\a^{\ell+1}\b^{s+2}(v),
 \a^{m+2}\b^{n}(u)\cdot\a^{k+1}\b^{t+1}(z)]&\nonumber\\
 &\hspace{-10mm}+[\a^{m+1}\b^{n+1}(u)\cdot\a^{p+1}\b^{q+1}(y),
 \a^{k}\b^{t+2}(z)\cdot\a^{\ell+2}\b^{s+1}(v)]&\nonumber\\
 &\hspace{20mm}=2(\a^{m+1}\b^{n+1}(u)\cdot\a^{\ell+1}\b^{s+2}(v))\cdot[\a^{p+1}\b^{q+1}(y),
 \a^{k+1}\b^{t+1}(z)];& \label{eq:5.7}\\
 &\hspace{-40mm}\a^{p+2}\b^{q+2}(x)\cdot[\a^{\ell+1}\b^{s+2}(u),
 \a^{m+2}\b^{n}(y)\cdot\a^{k+2}\b^{t}(v)]&\nonumber\\
 &\hspace{-10mm}+\a^{k+1}\b^{t+3}(v)\cdot[\a^{p+1}\b^{q+1}(x)\cdot\a^{m+2}\b^{n}(y),
 \a^{\ell+2}\b^{s+1}(u)]&\nonumber\\
 &\hspace{20mm}+(\a^{m+1}\b^{n+2}(y)\cdot\a^{\ell+1}\b^{s+2}(u))\cdot[\a^{k+1}\b^{t+2}(v),
 \a^{p+3}\b^{q}(x)]=0,&\label{eq:5.8}
 \end{eqnarray}
 where $m, n, \ell, s, p, q, k, t\in \mathbb{Z}\  (\hbox{the set of integers}), x, y, z, u, v\in L$.
 \end{lem}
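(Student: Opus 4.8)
The plan is to recognize that both displayed identities are generalizations, with arbitrary integer powers of $\a$ and $\b$ distributed across the arguments, of the more uniform identities already proved in Theorem~\ref{thm:5.4}. The key structural observation is that in a regular TBP algebra the multiplicativity relations $\a(x\cdot y)=\a(x)\cdot\a(y)$, $\b(x\cdot y)=\b(x)\cdot\b(y)$, $\a[x,y]=[\a(x),\a(y)]$, $\b[x,y]=[\b(x),\b(y)]$ let me pull the structure maps in and out of both products, and $\a,\b$ commute, so any particular distribution of exponents is equivalent (after applying a suitable total power of $\a^a\b^b$ to the whole identity) to the ``balanced'' version. Thus the essential content of Eq.~(\ref{eq:5.7}) is precisely the content of the identity Eq.~(\ref{eq:5.6}) derived in Theorem~\ref{thm:5.4}, and the essential content of Eq.~(\ref{eq:5.8}) is that of Eq.~(\ref{eq:5.4}) (together with Eq.~(\ref{eq:5.3})).

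First I would treat Eq.~(\ref{eq:5.7}). I would apply the compatibility identity Eq.~(\ref{eq:5.1}) to each of the two bracket terms on the left-hand side, writing each as $2\a\b(\text{something})\cdot[\cdot,\cdot]$ minus a correction bracket, exactly in the manner of the opening step in the proof of Eq.~(\ref{eq:5.6}). The BiHom-commutativity Eq.~(\ref{eq:4.2}) and BiHom-skew-symmetry Eq.~(\ref{eq:4.4}) are then used to match the exponent patterns on the two sides. Because the exponents $m,n,\ell,s,p,q,k,t$ are free, the correct bookkeeping is to check that in each monomial the total $\a$-degree and total $\b$-degree agree across all three terms; once that is verified, the identity reduces termwise to Eq.~(\ref{eq:5.6}) after factoring out a common $\a^a\b^b$. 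Concretely, I would show that applying Eq.~(\ref{eq:5.1}) to both bracket products on the left produces $2(\a^{m+1}\b^{n+1}(u)\cdot\a^{\ell+1}\b^{s+2}(v))\cdot[\a^{p+1}\b^{q+1}(y),\a^{k+1}\b^{t+1}(z)]$ plus two residual bracket terms that cancel by Eq.~(\ref{eq:4.2}) and Eq.~(\ref{eq:4.4}).

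For Eq.~(\ref{eq:5.8}), the approach is the same but the target is the three-term cyclic identity Eq.~(\ref{eq:5.4}). I would expand the first summand using Eq.~(\ref{eq:5.1}) to trade $\a^{p+2}\b^{q+2}(x)\cdot[\,\cdot\,,\a^{m+2}\b^n(y)\cdot\a^{k+2}\b^t(v)]$ for brackets of products, then reconcile the remaining two summands (which are already in ``product-inside-bracket'' form) against the first. The exponents are arranged so that after normalizing by a common power of $\a^a\b^b$ the three terms coincide with the three cyclic summands of Eq.~(\ref{eq:5.4}) in the variables $x,y,v$; the vanishing then follows directly from that theorem, with Eq.~(\ref{eq:5.3}) supplying any auxiliary cancellation. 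I would present the verification by tracking the $(\a,\b)$-degree of each argument and confirming the three resulting terms are a cyclic permutation of the established identity.

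The main obstacle I expect is purely the exponent bookkeeping: ensuring that each of the (many) monomials produced by Eq.~(\ref{eq:5.1}) carries exactly the right power of $\a$ and of $\b$ on each of its factors so that it lines up with a monomial in Eq.~(\ref{eq:5.6}) (resp.\ Eq.~(\ref{eq:5.4})) after factoring out the common total power. There is no genuinely new algebraic phenomenon here beyond Theorem~\ref{thm:5.4}; the difficulty is that with eight free integer parameters the number of places a map can sit is large, so the risk is an off-by-one error in some exponent rather than a structural gap. I would therefore organize the computation by first recording, for each identity, the unique total $\a$- and $\b$-degree that every term must share, use that as a consistency check at each application of Eqs.~(\ref{eq:5.1}), (\ref{eq:4.2}), (\ref{eq:4.4}), and only then carry out the reduction to the balanced forms Eq.~(\ref{eq:5.6}) and Eq.~(\ref{eq:5.4}).
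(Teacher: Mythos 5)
Your central reduction claim is false, and it is the load-bearing step of the whole proposal. You assert that Eq.(\ref{eq:5.7}) is an exponent-redistributed form of Eq.(\ref{eq:5.6}) and that Eq.(\ref{eq:5.8}) is an exponent-redistributed form of Eq.(\ref{eq:5.4}), so that both follow ``after factoring out a common $\a^a\b^b$.'' But redistributing powers of $\a,\b$ (equivalently, renaming variables $x\mapsto\a^{c}\b^{d}(x)$ and applying a global power of the structure maps) can never change \emph{which operations occur where} in a term; it preserves the nesting pattern of $\cdot$ and $[,]$. The identities in the lemma have a different pattern from those in Theorem \ref{thm:5.4}: specialize $\a=\b=\id$ to see this. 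Then Eq.(\ref{eq:5.6}) reads $\circlearrowleft_{x,y,z}[x,y]\cdot[h,z]=0$ (a sum of \emph{products of brackets}), while Eq.(\ref{eq:5.7}) reads $[y\cdot v,u\cdot z]+[u\cdot y,z\cdot v]=2(u\cdot v)\cdot[y,z]$ (\emph{brackets of products}); and Eq.(\ref{eq:5.4}) reads $\circlearrowleft_{x,y,z}[h\cdot[x,y],z]=0$ (\emph{nested} brackets), while Eq.(\ref{eq:5.8}) reads $x\cdot[u,y\cdot v]+v\cdot[x\cdot y,u]+(y\cdot u)\cdot[v,x]=0$ (no nesting). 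These are genuinely different identities, so ``exponent bookkeeping'' cannot reduce the lemma to Theorem \ref{thm:5.4}, and your premise that ``there is no genuinely new algebraic phenomenon here'' is wrong.

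Your concrete mechanism for Eq.(\ref{eq:5.7}) also fails. Applying Eq.(\ref{eq:5.1}) to either bracket on the left (in any of the possible ways) produces a leading term of the form $2(\hbox{element})\cdot[\ ,\ ]$ \emph{plus} a residual bracket one of whose slots is a triple product, e.g. in the untwisted case $[y\cdot v,u\cdot z]=2u\cdot[y\cdot v,z]-[u\cdot y\cdot v,z]$ and $[u\cdot y,z\cdot v]=2v\cdot[u\cdot y,z]-[u\cdot v\cdot y,z]$. These residuals have the \emph{same} sign and do not cancel by Eqs.(\ref{eq:4.2}) and (\ref{eq:4.4}); they must instead be absorbed by a further application of Eq.(\ref{eq:5.1}) with the \emph{product} $u\cdot v$ in the $x$-slot ($[u\cdot v\cdot y,z]+[y,u\cdot v\cdot z]=2(u\cdot v)\cdot[y,z]$), and the leading terms must likewise be re-expanded by Eq.(\ref{eq:5.1}); only then does the balance $4(u\cdot v)\cdot[y,z]-2(u\cdot v)\cdot[y,z]=2(u\cdot v)\cdot[y,z]$ emerge, which is exactly where the factor $2$ (rather than $4$) comes from. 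This is what the paper means by ``Eq.(\ref{eq:5.7}) can be obtained by Eq.(\ref{eq:5.1})'': repeated use of Eq.(\ref{eq:5.1}) together with Eqs.(\ref{eq:4.1}), (\ref{eq:4.2}), (\ref{eq:4.4}), not a one-shot substitution with cancelling remainders. Finally, for Eq.(\ref{eq:5.8}) the paper's route is: double the left-hand side, flip the bracket in the third summand by Eq.(\ref{eq:4.4}), expand the first two summands by Eq.(\ref{eq:5.1}) and the third by Eq.(\ref{eq:5.7}) itself, and cancel the resulting six brackets pairwise using Eqs.(\ref{eq:4.1}), (\ref{eq:4.2}), (\ref{eq:4.4}). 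So the first identity of the lemma is the essential input for the second; Eqs.(\ref{eq:5.3}) and (\ref{eq:5.4}), which your plan leans on, play no role and could not, for the structural reason above.
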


 \begin{proof} Eq.(\ref{eq:5.7}) can be obtained by Eq.(\ref{eq:5.1}). Next we prove that Eq.(\ref{eq:5.8}) holds. For all $m, n, \ell, s$, $p, q, k, t\in \mathbb{Z}, x, y, z, u, v\in L$, we have
 \begin{eqnarray*}
 &&\hspace{-20mm}2\a^{p+2}\b^{q+2}(x)\cdot[\a^{\ell+1}\b^{s+2}(u),\a^{m+2}\b^{n}(y)\cdot\a^{k+2}\b^{t}(v)] +2\a^{k+1}\b^{t+3}(v)\cdot[\a^{p+1}\b^{q+1}(x)\cdot\a^{m+2}\b^{n}(y),\\
 &&\a^{\ell+2}\b^{s+1}(u)]+2(\a^{m+1}\b^{n+2}(y)\cdot\a^{\ell+1}\b^{s+2}(u))\cdot[\a^{k+1}\b^{t+2}(v),\a^{p+3}\b^{q}(x)]\\
 &\stackrel{(\ref{eq:4.4})}=&\hspace{-5mm}2\a^{p+2}\b^{q+2}(x)\cdot[\a^{\ell+1}\b^{s+2}(u),
 \a^{m+2}\b^{n}(y)\cdot\a^{k+2}\b^{t}(v)]+2\a^{k+1}\b^{t+3}(v)\cdot[\a^{p+1}\b^{q+1}(x)\\
 &&\hspace{-5mm}\cdot\a^{m+2}\b^{n}(y),\a^{\ell+2}\b^{s+1}(u)]-2(\a^{m+1}\b^{n+2}(y)\cdot\a^{\ell+1}\b^{s+2}(u))\cdot[\a^{p+2}\b^{q+1}(x),\a^{k+2}\b^{t+1}(v)]\\
 &\stackrel{(\ref{eq:5.1})(\ref{eq:5.7})}=&\hspace{-5mm}[\a^{p+1}\b^{q+2}(x)
 \cdot\a^{\ell+1}\b^{s+2}(u),\a^{m+2}\b^{n+1}(y)\cdot\a^{k+2}\b^{t+1}(v)]+[\a^{\ell+1}\b^{s+3}(u),\a^{p+2}\b^{q+1}(x)\\
 &&\hspace{-5mm}\cdot(\a^{m+2}\b^{n}(y)\cdot\a^{k+2}\b^{t}(v))]+[\a^{k}\b^{t+3}(v)\cdot(\a^{p+1}\b^{q+1}(x)\cdot\a^{m+2}\b^{n}(y)),\a^{\ell+2}\b^{s+2}(u)]\\
 &&\hspace{-5mm}+[\a^{p+1}\b^{q+2}(x)\cdot\a^{m+2}\b^{n+1}(y),\a^{k+1}\b^{t+2}(v)
 \cdot\a^{\ell+2}\b^{s+1}(u)]-[\a^{m+1}\b^{n+2}(y)\cdot\a^{p+2}\b^{q+1}(x),\\
 &&\hspace{-5mm}\a^{k+1}\b^{t+2}(v)\cdot\a^{\ell+2}\b^{s+1}(u)]
 -[\a^{p+1}\b^{q+2}(x)\cdot\a^{\ell+2}\b^{s+2}(u),\a^{m+2}\b^{n+1}(y)\cdot\a^{k+2}\b^{t+1}(v)]\\
 &\stackrel{(\ref{eq:4.1})(\ref{eq:4.2})(\ref{eq:4.4})}=&\hspace{-2mm}0,
 \end{eqnarray*}
 finishing the proof.
 \end{proof}

 \begin{rmk} Let $p=-2, q=0, m=-2, n=0, \ell=-1, s=-1, k=-1, t=-1$ in Eq.(\ref{eq:5.8}). Then one has
 \begin{eqnarray}
 &\hspace{-4mm}\b^{2}(x)\cdot[\b(u),y\cdot\a\b^{-1}(v)]+\b^{2}(v)\cdot[\a^{-1}\b(x)\cdot y,\a(u)]+(\a^{-1}\b^{2}(y)\cdot\b(u))\cdot[\b(v),\a(x)]=0.&\label{eq:7.17}
 \end{eqnarray}
 \end{rmk}

 \begin{defi}\cite{KMS}\label{de:7.1} A {\bf 3-BiHom-Lie algebra} is a 4-tuple $(L, [\ ,\ ,\ ], \a, \b)$, where $L$ is a vector space, $\a,\b:L\lr L$ are two commuting linear maps and $[\ ,\ ,\ ]:L\times L\times L\lr L$ is a 3-linear map such that for all $x, y, z, u, v\in L$,{\small
 \begin{eqnarray}
 &\a([x,y,z])=[\a(x),\a(y),\a(z)],\ \  \b([x,y,z])=[\b(x),\b(y),\b(z)],&\label{eq:7.2}\\
 &[\b(x),\b(y),\a(z)]=-[\b(y),\b(x),\a(z)]=
 -[\b(x),\b(z),\a(y)],~\hbox{(BiHom-skew symmetry)}&\label{eq:7.3}\\
 &\hspace{-3mm}[\b^{2}(u),\b^{2}(v),[\b(x),\b(y),\a(z)]]=[\b^{2}(y),\b^{2}(z),[\b(u),\b(v),\a(x)]]
 -[\b^{2}(x),\b^{2}(z),[\b(u),\b(v),\a(y)]]&\nonumber\\
 &\hspace{40mm}+[\b^{2}(x),\b^{2}(y),[\b(u),\b(v),\a(z)]].\hbox{(3-BiHom-Jacobi identity)}& \label{eq:7.4}
 \end{eqnarray}}
 \end{defi}

 \begin{thm}\label{thm:7.8} Let $(L, \cdot, [,], \a, \b)$ be a regular TBP algebra and $D$ be a derivation of $(L,\cdot,\a,\b)$ and $(L, [,], \a, \b)$. Define a ternary operation on $L$ by
 \begin{eqnarray}
 &\hspace{-6mm}[x,y,z]=D(x)\cdot[\b^{-1}(y),\b^{-1}(z)]+D(y)\cdot[\a^{-1}(z),\a\b^{-2}(x)]
 +D\a^{-1}\b(z)\cdot[\b^{-1}(x),\a\b^{-2}(y)].&\label{eq:7.8}
 \end{eqnarray}
 Then $(L, [\ ,\ ,\ ], \a, \b)$ is a 3-BiHom-Lie algebra.
 \end{thm}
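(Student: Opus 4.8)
The plan is to verify, in order, the three defining conditions of a 3-BiHom-Lie algebra from Definition \ref{de:7.1} for the triple bracket \eqref{eq:7.8}, the last condition being by far the most laborious. Throughout I will use that $D$ commutes with the structure maps and is a derivation for both $\cdot$ and $[,]$.

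\emph{Multiplicativity} \eqref{eq:7.2}. First I would check that $\a$ and $\b$ are ternary morphisms. Since $\a$ is multiplicative for both $\cdot$ and $[,]$, commutes with $\b$ (hence with $\b^{-1}$) and with $D$, one simply pushes $\a$ into each of the three summands of \eqref{eq:7.8}: for the first summand, $\a\big(D(x)\cdot[\b^{-1}(y),\b^{-1}(z)]\big)=D\a(x)\cdot[\b^{-1}\a(y),\b^{-1}\a(z)]$, which is precisely the first summand of $[\a(x),\a(y),\a(z)]$, and likewise for the other two. The identical argument, with $\a$ replaced by $\b$, gives the second half. This step is purely formal.

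\emph{Skew-symmetry} \eqref{eq:7.3}. The key observation is that the prescribed twists collapse after substitution. Setting $P(x,y,z):=D\b(x)\cdot[y,\a\b^{-1}(z)]$ and inserting $\b(x),\b(y),\a(z)$ into \eqref{eq:7.8}, the three summands become $P(x,y,z)$, $P(y,z,x)$, $P(z,x,y)$, so that
\[
[\b(x),\b(y),\a(z)]=P(x,y,z)+P(y,z,x)+P(z,x,y)
\]
is a cyclic sum. Now BiHom-skew-symmetry \eqref{eq:4.4} gives $[y,\a\b^{-1}(z)]=[\b(\b^{-1}(y)),\a(\b^{-1}(z))]=-[\b(\b^{-1}(z)),\a(\b^{-1}(y))]=-[z,\a\b^{-1}(y)]$, i.e. $P$ is antisymmetric in its last two slots, $P(x,z,y)=-P(x,y,z)$. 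A cyclic sum of a function antisymmetric in a pair of its arguments is totally antisymmetric, and this yields both $[\b(x),\b(y),\a(z)]=-[\b(y),\b(x),\a(z)]$ and $[\b(x),\b(y),\a(z)]=-[\b(x),\b(z),\a(y)]$ simultaneously.

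\emph{The 3-BiHom-Jacobi identity} \eqref{eq:7.4} is where the real work lies, and I expect it to be the main obstacle. I would expand all four outer ternary brackets by applying \eqref{eq:7.8} to the inner bracket first and then to the outer one, each time invoking the Leibniz rule of $D$ (for both $\cdot$ and $[,]$) to distribute $D$ across the products and brackets it meets. This generates a large alternating sum whose summands have one of the two shapes $(\text{product})\cdot[\,\cdot\,,\,\cdot\,]$ and $[\,\cdot\,,(\text{product})]$, decorated with assorted powers $\a^{m}\b^{n}$ and carrying either one or two factors of $D$. Theorem \ref{thm:5.4} and Lemma \ref{lem:5.4-1} are tailor-made to absorb exactly these expressions: the general-exponent identity \eqref{eq:5.8}, together with its specialization \eqref{eq:7.17}, rewrites each ``bracket-of-a-product'' as a ``product-of-a-bracket'' with the correct twist, after which the cyclic identities \eqref{eq:5.3}--\eqref{eq:5.6} annihilate the resulting cyclic groupings, while \eqref{eq:5.7}, \eqref{eq:4.2} and \eqref{eq:4.4} settle the commutativity and skew bookkeeping. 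The strategy is to sort the expanded terms by their derivative content (those carrying $D^{2}$ versus those carrying a single $D$ on each of two factors) and to show that each homogeneous block cancels, the $D^{2}$ block via the Leibniz rule together with \eqref{eq:5.8}/\eqref{eq:7.17} and the single-$D$ block via the cyclic identities. The genuine difficulty is organizational rather than conceptual: keeping track of the many twisted summands and matching each to the exact instance of \eqref{eq:5.8}, i.e. to the right choice of integers $m,n,\ell,s,p,q,k,t$, that kills it.
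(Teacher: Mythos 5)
Your treatment of the two ``direct'' axioms is correct. The multiplicativity check is routine (note that, like the paper, you implicitly need $D$ to commute with $\a$ and $\b$), and your skew-symmetry argument --- writing $[\b(x),\b(y),\a(z)]$ as the cyclic sum of $P(x,y,z)=D\b(x)\cdot[y,\a\b^{-1}(z)]$, observing via Eq.~\eqref{eq:4.4} that $P$ is antisymmetric in its last two slots, and concluding that the cyclic sum is totally antisymmetric --- is a tidy and valid way of making precise what the paper dismisses as ``others are direct''.

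The genuine gap is that the 3-BiHom-Jacobi identity \eqref{eq:7.4}, which is the entire substance of the theorem, is never actually proved: everything in your third step is conditional language (``I would expand\dots'', ``the strategy is\dots''), no term is computed, and this is precisely the part that costs the paper roughly two pages of calculation. The gap is not merely one of execution, for two reasons. First, your sketch omits the structural idea that makes the expansion close up: before sorting anything, the paper establishes the auxiliary identity \eqref{eq:7.9}, which converts the cyclic sum $\circlearrowleft_{x,y,z}D\a^{-1}\b^{2}(x)\cdot D([\a^{-1}\b(y),z])$ into $-\circlearrowleft_{x,y,z}\a^{-1}\b^{2}(x)\cdot[D\a^{-1}\b(y),D(z)]$ using \eqref{eq:5.1}, \eqref{eq:4.2}, \eqref{eq:4.4}; this is what disposes of the terms in which the outer $D$ falls on the inner Lie bracket, and it is not supplied by the tools you list (\eqref{eq:5.8} and \eqref{eq:7.17} play a different role, and \eqref{eq:5.7} is never used in this proof at all). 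Second, your claimed cancellation pattern --- sort by derivative content, with ``the $D^{2}$ block via the Leibniz rule together with \eqref{eq:5.8}/\eqref{eq:7.17} and the single-$D$ block via the cyclic identities'' --- has the roles of the identities essentially reversed and misdescribes the matching: in the paper's computation the $D^{2}$ terms do not cancel among themselves by \eqref{eq:5.8}/\eqref{eq:7.17}; one group of them (the paper's (I)) survives and must reproduce exactly the $D^{2}$ terms of $[\b^{2}(u),\b^{2}(v),[\b(x),\b(y),\a(z)]]$, a second group ((II)) vanishes by \eqref{eq:5.6} and a third ((III)) by \eqref{eq:5.3}, while the single-$D$ and nested terms cancel only after being recombined across different outer brackets (the paper's (IV)+(VII) and (V)+(VI)) using \eqref{eq:5.4} together with \eqref{eq:7.17}. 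As stated, your plan could not be completed by bookkeeping alone; it needs \eqref{eq:7.9} and a correct matching scheme, neither of which appears in the proposal.
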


 \begin{proof} We only check Eq.(\ref{eq:7.4}) below, and others are direct. Firstly, for all $x, y, z, u, v\in L$, one has {\small
 \begin{eqnarray*}
 &&\hspace{-15mm}2D\a^{-1}\b^{2}(x)\cdot D([\a^{-1}\b(y),z])+2D\a^{-1}\b^{2}(y)\cdot D([\a^{-1}\b(z),x])+2D\a^{-1}\b^{2}(z)\cdot D([\a^{-1}\b(x),y])\\
 &&\hspace{-11mm}\stackrel{(\ref{eq:5.1})}=[D\a^{-2}\b^{2}(x)\cdot D\a^{-1}\b(y),\b(z)]+[D\a^{-1}\b^{2}(y),D\a^{-1}\b(x)\cdot z]+[D\a^{-2}\b^{2}(x)\cdot\a^{-1}\b(y),D\b(z)]\\
 &&\hspace{-7mm}+[\a^{-1}\b^{2}(y),D\a^{-1}\b(x)\cdot D(z)]+[D\a^{-2}\b^{2}(y)\cdot D\a^{-1}\b(z),\b(x)]+[D\a^{-1}\b^{2}(z),D\a^{-1}\b(y)\cdot x]\\
 &&\hspace{-7mm}+[D\a^{-2}\b^{2}(y)\cdot\a^{-1}\b(z),D\b(x)]+[\a^{-1}\b^{2}(z),D\a^{-1}\b(y)\cdot D(x)]+[D\a^{-2}\b^{2}(z)\cdot D\a^{-1}\b(x),\b(y)]\\
 &&\hspace{-7mm}+[D\a^{-1}\b^{2}(x),D\a^{-1}\b(z)\cdot y]+[D\a^{-2}\b^{2}(z)\cdot\a^{-1}\b(x),D\b(y)]+[\a^{-1}\b^{2}(x),D\a^{-1}\b(z)\cdot D(y)]\\
 &&\hspace{-12mm}\stackrel{(\ref{eq:4.2})(\ref{eq:4.4})}=[D\a^{-1}\b^{2}(y),D\a^{-1}\b(x)\cdot z]+[D\a^{-2}\b^{2}(x)\cdot\a^{-1}\b(y),D\b(z)]+[D\a^{-1}\b^{2}(z),D\a^{-1}\b(y)\cdot x]\\
 &&\hspace{-7mm}+[D\a^{-2}\b^{2}(y)\cdot\a^{-1}\b(z),D\b(x)]
 +[D\a^{-1}\b^{2}(x),D\a^{-1}\b(z)\cdot y]+[D\a^{-2}\b^{2}(z)\cdot\a^{-1}\b(x),D\b(y)]\\
 &&\hspace{-13mm}\stackrel{(\ref{eq:4.2})(\ref{eq:4.4})(\ref{eq:5.1})}=\hspace{-2mm}
 -2\a^{-1}\b^{2}(x)\cdot[D\a^{-1}\b(y),D(z)]-2\a^{-1}\b^{2}(y)\cdot[D\a^{-1}\b(z),D(x)] \\
 &&\hspace{-3mm}-2\a^{-1}\b^{2}(z)\cdot[D\a^{-1}\b(x),D(y)].
 \end{eqnarray*}}
 So{\small
 \begin{eqnarray}
 &\hspace{-15mm}D\a^{-1}\b^{2}(x)\cdot D([\a^{-1}\b(y),z])+D\a^{-1}\b^{2}(y)\cdot D([\a^{-1}\b(z),x])+D\a^{-1}\b^{2}(z)\cdot D([\a^{-1}\b(x),y])&\nonumber\\
 &\hspace{-10mm}=-\a^{-1}\b^{2}(x)\cdot[D\a^{-1}\b(y),D(z)]-\a^{-1}\b^{2}(y)\cdot[D\a^{-1}\b(z),D(x)]
 -\a^{-1}\b^{2}(z)\cdot[D\a^{-1}\b(x),D(y)].&\label{eq:7.9}
 \end{eqnarray} }
 Secondly, we calculate {\small
 \begin{eqnarray*}
 &&\hspace{-19mm}[\b^{2}(y),\b^{2}(z),[\b(u),\b(v),\a(x)]]
 -[\b^{2}(x),\b^{2}(z),[\b(u),\b(v),\a(y)]]
 +[\b^{2}(x),\b^{2}(y),[\b(u),\b(v),\a(z)]]\\
 &\stackrel{(\ref{eq:7.9})}=&\hspace{-3mm}\Big((D^{2}\a^{-1}\b^{2}(x)
 \cdot[\a^{-1}\b(u),v])\cdot[\b(y),\a(z)]
 -(D^{2}\a^{-1}\b^{2}(y)\cdot[\a^{-1}\b(u),v])\cdot[\b(x),\a(z)]\\
 &&\hspace{-6mm}+(D^{2}\a^{-1}\b^{2}(z)\cdot[\a^{-1}\b(u),v])\cdot[\b(x),\a(y)]\Big)
 +\Big((D^{2}\a^{-1}\b^{2}(u)[\a^{-1}\b(v),x])\cdot[\b(y),\a(z)]\\
 &&\hspace{-6mm}-(D^{2}\a^{-1}\b^{2}(u)\cdot[\a^{-1}\b(v),y])\cdot[\b(x),\a(z)]
 +(D^{2}\a^{-1}\b^{2}(u)\cdot[\a^{-1}\b(v),z])\cdot[\b(x),\a(y)]\\
 &&\hspace{-6mm}+(D^{2}\a^{-1}\b^{2}(v)\cdot[\a^{-1}\b(x),u])\cdot[\b(y),\a(z)]
 -(D^{2}\a^{-1}\b^{2}(v)\cdot[\a^{-1}\b(y),u])\cdot[\b(x),\a(z)]\\
 &&\hspace{-6mm}+(D^{2}\a^{-1}\b^{2}(v)[\a^{-1}\b(z),u])[\b(x),\a(y)]\Big)
 +\Big(\hspace{-1.5mm}-(\a^{-1}\b^{2}(x)\cdot[D\a^{-1}\b(u),D(v)])\cdot[\b(y),\a(z)]\\
 &&\hspace{-6mm}+(\a^{-1}\b^{2}(y)\cdot[D\a^{-1}\b(u),D(v)])\cdot[\b(x),\a(z)]
 -(\a^{-1}\b^{2}(z)\cdot[D\a^{-1}\b(u),D(v)])\cdot[\b(x),\a(y)]\Big)\\
 &&\hspace{-8mm}+\Big(\hspace{-1.5mm}-(\a^{-1}\b^{2}(u)\cdot[D\a^{-1}\b(v),D(x)])\cdot[\b(y),\a(z)]+(\a^{-1}\b^{2}(u)\cdot[D\a^{-1}\b(v),D(y)])\cdot[\b(x),\a(z)]\\
 &&\hspace{-8mm}-(\a^{-1}\b^{2}(u)\cdot[D\a^{-1}\b(v),D(z)])\cdot[\b(x),\a(y)]\Big)
 +\Big(\hspace{-1.5mm}-(\a^{-1}\b^{2}(v)\cdot[D\a^{-1}\b(x),D(u)])\cdot[\b(y),\a(z)]\\
 &&\hspace{-6mm}+(\a^{-1}\b^{2}(v)\cdot[D\a^{-1}\b(y),D(u)])\cdot[\b(x),\a(z)]
 -(\a^{-1}\b^{2}(v)\cdot[D\a^{-1}\b(z),D(u)])\cdot[\b(x),\a(y)]\Big)\\
 &&\hspace{-6mm}+\Big(\hspace{-1.5mm}-D\b^{2}(x)\cdot[\b(z),D(u)\cdot[\b^{-1}(v),\a\b^{-2}(y)]]
 +D\b^{2}(x)\cdot[\b(y),D(u)\cdot[\b^{-1}(v),\a\b^{-2}(z)]]\\
 &&\hspace{-6mm}+D\b^{2}(y)\cdot[\b(z),D(u)\cdot[\b^{-1}(v),\a\b^{-2}(x)]]
 +D\b^{2}(y)\cdot[D\a^{-1}\b(u)\cdot[\a^{-1}(v),\b^{-1}(z)],\a(x)]\\
 &&\hspace{-6mm}+D\b^{2}(z)\cdot[D\a^{-1}\b(u)\cdot[\a^{-1}(v),\b^{-1}(x)],\a(y)]
 -D\b^{2}(z)\cdot[D\a^{-1}\b(u)\cdot[\a^{-1}(v),\b^{-1}(y)],\a(x)]\Big)\\
 &&\hspace{-6mm}+\Big(\hspace{-1.5mm}-D\b^{2}(x)\cdot[\b(z),D(v)\cdot[\b^{-1}(y),\a\b^{-2}(u)]]
 +D\b^{2}(x)\cdot[\b(y),D(v)\cdot[\b^{-1}(z),\a\b^{-2}(u)]]\\
 &&\hspace{-6mm}+D\b^{2}(y)\cdot[\b(z),D(v)\cdot[\b^{-1}(x),\a\b^{-2}(u)]]
 +D\b^{2}(y)\cdot[D\a^{-1}\b(v)\cdot[\a^{-1}(z),\b^{-1}(u)],\a(x)]\\
 &&\hspace{-6mm}+D\b^{2}(z)\cdot[D\a^{-1}\b(v)\cdot[\a^{-1}(x),\b^{-1}(u)],\a(y)]
 -D\b^{2}(z)\cdot[D\a^{-1}\b(v)\cdot[\a^{-1}(y),\b^{-1}(u)],\a(x)]\Big)\\
 &&\hspace{-6mm}+\Big(D\b^{2}(x)\cdot[\b(y),D(z)\cdot[\b^{-1}(u),\a\b^{-2}(v)]]
 +D\b^{2}(z)\cdot[D\a^{-1}\b(x)\cdot[\a^{-1}(u),\b^{-1}(v)],\a(y)]\\
 &&\hspace{-6mm}+D\b^{2}(y)\cdot[\b(z),D(x)\cdot[\b^{-1}(u),\a\b^{-2}(v)]]
 -D\b^{2}(x)\cdot[\b(z),D(y)\cdot[\b^{-1}(u),\a\b^{-2}(v)]]\\
 &&\hspace{-6mm}-D\b^{2}(z)\cdot[D\a^{-1}\b(y)\cdot[\a^{-1}(u),\b^{-1}(v)],\a(x)]
 +D\b^{2}(y)\cdot[D\a^{-1}\b(z)\cdot[\a^{-1}(u),\b^{-1}(v)],\a(x)]\Big)\\
 &\stackrel{\bigtriangleup}=&\hspace{-4mm}\hbox{(I)}+\hbox{(II)}+\hbox{(III)}+\hbox{(IV)}+\hbox{(V)}+\hbox{(VI)}+\hbox{(VII)}+\hbox{(VIII)}.
 \end{eqnarray*}}
 While {\small
 \begin{eqnarray*}
 \hbox{(I)}\hspace{-7mm} &\stackrel{(\ref{eq:4.1})(\ref{eq:4.2})(\ref{eq:4.4})}=&\hspace{-7mm}(D^{2}\a^{-1}\b^{2}(x)\cdot[\a^{-1}\b(y),z])\cdot[\b(u),\a(v)]+(D^{2}\a^{-1}\b^{2}(y)\cdot[\a^{-1}\b(z),x])\cdot[\b(u),\a(v)]\\
 &&+(D^{2}\a^{-1}\b^{2}(z)\cdot[\a^{-1}\b(x),y])\cdot[\b(u),\a(v)].\\
 \hbox{(II)}&\stackrel{(\ref{eq:4.1})(\ref{eq:4.2})(\ref{eq:4.4})(\ref{eq:5.6})}=&0.\\
 \hbox{(III)}&\stackrel{(\ref{eq:4.1})(\ref{eq:4.2})(\ref{eq:4.4})(\ref{eq:5.3})}=&0.
 \end{eqnarray*}}
 Further, {\small
 \begin{eqnarray*}
 &&\hspace{-6mm}-(\a^{-1}\b^{2}(u)\cdot[D\a^{-1}\b(v),D(x)])\cdot[\b(y),\a(z)]-D\b^{2}(x)\cdot[\b(z),D(v)\cdot[\b^{-1}(y),\a\b^{-2}(u)]]\\
 &&\hspace{90mm}+D\b^{2}(x)\cdot[\b(y),D(v)\cdot[\b^{-1}(z),\a\b^{-2}(u)]]\\
 &&\hspace{-6mm}\stackrel{(\ref{eq:4.1})(\ref{eq:4.2})(\ref{eq:4.4})(\ref{eq:5.4})}=-([\a^{-2}\b^{2}(z),\a^{-1}\b(y)]\cdot\b(u))\cdot[D\b(x),D\a(v)]-D\b^{2}(x)\cdot[D\a^{-1}\b(v)\cdot[\a^{-1}(z),\b^{-1}(y)],\a(u)]\\
 &&\stackrel{(\ref{eq:7.17})(\ref{eq:4.4})}= -D\b^{2}(v)\cdot[\b(u),D(x)\cdot[\b^{-1}(y),\a\b^{-2}(z)]].
 \end{eqnarray*}}
 Based on the above identity, we obtain {\small
 \begin{eqnarray*}
 \hbox{(IV)}+\hbox{(VII)}\hspace{-4mm}&=&\hspace{-4mm}-D\b^{2}(v)\cdot[\b(u),D(x)\cdot[\b^{-1}(y),\a\b^{-2}(z)]]-D\b^{2}(v)\cdot[\b(u),D(y)\cdot[\b^{-1}(z),\a\b^{-2}(x)]]\\
 &&-D\b^{2}(v)\cdot[\b(u),D(z)\cdot[\b^{-1}(x),\a\b^{-2}(y)]]\\
 &\stackrel{(\ref{eq:4.4})}=&\hspace{-3mm}D\b^{2}(v)\cdot[D\a^{-1}\b(x)\cdot[\a^{-1}(y),\b^{-1}(z)],\a(u)]+D\b^{2}(v)\cdot[D\a^{-1}\b(y)\cdot[\a^{-1}(z),\b^{-1}(x)],\a(u)]\\
 &&+D\b^{2}(v)\cdot[D\a^{-1}\b(z)\cdot[\a^{-1}(x),\b^{-1}(y)],\a(u)],\\
 \hbox{(V)}+\hbox{(VI)}\hspace{-4mm}&=&\hspace{-4mm}D\b^{2}(u)\cdot[\b(v),D(x)\cdot[\b^{-1}(y),\a\b^{-2}(z)]]+D\b^{2}(u)\cdot[\b(v),D(y)\cdot[\b^{-1}(z),\a\b^{-2}(x)]]\\
 &&\hspace{-4mm}+D\b^{2}(u)\cdot[\b(v),D(z)\cdot[\b^{-1}(x),\a\b^{-2}(y)]].
 \end{eqnarray*}}
 From Eq.(\ref{eq:7.17}), we can get {\small
 \begin{eqnarray*}
 \hbox{(VIII)}&=&-(\a^{-1}\b^{2}(x)\cdot[D\a^{-1}\b(y),D(z)])\cdot[\b(u),\a(v)]-(\a^{-1}\b^{2}(y)\cdot[D\a^{-1}\b(z),D(x)])\cdot[\b(u),\a(v)]\\
 &&-(\a^{-1}\b^{2}(z)\cdot[D\a^{-1}\b(x),D(y)])\cdot[\b(u),\a(v)].
 \end{eqnarray*}}
 In summary, Eq.(\ref{eq:7.4}) holds.
 \end{proof}

 Another approach to construct 3-BiHom-Lie algebras from TBP algebras is provided below.

 \begin{thm}\label{thm:7.12} Let $(L, \cdot, [,], \a, \b)$ be a regular TBP algebra and $f: L\lr L$ a linear map such that $f^{2}=\id$,
 \begin{eqnarray}
 &f([x,y])=-[f(x),f(y)],\ \forall x,y\in L, &\label{eq:7.11}
 \end{eqnarray}
 and any two of the maps $\a, \b, f$ commute.  Define a ternary operation on $L$ by
 \begin{eqnarray}
 &\hspace{-8mm}[x,y,z]=f(x)\cdot[\b^{-1}(y),\b^{-1}(z)]+f(y)\cdot[\a^{-1}(z),\a\b^{-2}(x)] +f(\a^{-1}\b(z))\cdot[\b^{-1}(x),\a\b^{-2}(y)].&\label{eq:7.12}
 \end{eqnarray}
 Then $(L, [\ ,\ ,\ ], \a, \b)$ is a 3-BiHom-Lie algebra.
 \end{thm}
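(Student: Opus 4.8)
The plan is to verify the three defining conditions of a 3-BiHom-Lie algebra (Definition~\ref{de:7.1}) for the ternary bracket~\eqref{eq:7.12}, following the scheme of Theorem~\ref{thm:7.8} with the derivation $D$ replaced throughout by the involution $f$. The multiplicativity~\eqref{eq:7.2} is immediate: since $\a$ and $\b$ are morphisms for both $\cdot$ and $[,]$ and commute with $f$, applying $\a$ (resp.\ $\b$) to each of the three summands of~\eqref{eq:7.12} merely pushes it through $f$ and through the powers of $\a,\b$, giving $\a([x,y,z])=[\a(x),\a(y),\a(z)]$ and likewise for $\b$. The BiHom-skew symmetry~\eqref{eq:7.3} is equally routine: expanding $[\b(x),\b(y),\a(z)]$ by~\eqref{eq:7.12} produces a sum of terms of the shape $f\b(\cdot)\cdot[\cdot,\a\b^{-1}(\cdot)]$, and the two required antisymmetries follow termwise from the BiHom-skew symmetry~\eqref{eq:4.4} of $[,]$; here $f$ plays no role.

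The heart of the argument is the 3-BiHom-Jacobi identity~\eqref{eq:7.4}. First I would expand both sides using~\eqref{eq:7.12} and substitute the expansions of the inner brackets $[\b(\cdot),\b(\cdot),\a(\cdot)]$. The resulting terms split into those in which the outer twist $f$ lands on a single element and those in which it would land on an inner ternary bracket. For the first kind, the relations $f^{2}=\id$ (taking over the role of the $D^{2}$-terms in Theorem~\ref{thm:7.8}) and $f([u,v])=-[f(u),f(v)]$ (taking over the role of the Leibniz rule) reduce everything to products of single-$f$ elements with brackets, which are then discharged by the BiHom axioms~\eqref{eq:4.1},~\eqref{eq:4.2},~\eqref{eq:4.4} and the universal TBP identities~\eqref{eq:5.3},~\eqref{eq:5.4},~\eqref{eq:5.6},~\eqref{eq:7.17}, exactly as in the blocks $(\mathrm{I})$--$(\mathrm{VIII})$ of Theorem~\ref{thm:7.8}. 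In particular the cyclic sum $\circlearrowleft_{x,y,z}f\a^{-1}\b^{2}(x)\cdot f([\a^{-1}\b(y),z])$ collapses to $-\circlearrowleft_{x,y,z}\a^{-1}\b^{2}(f(x))\cdot[\a^{-1}\b(f(y)),f(z)]$, which vanishes by~\eqref{eq:5.3} applied after $\a^{-2}$.

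The main obstacle is the second kind of term, and it is structural rather than computational. In Theorem~\ref{thm:7.8} the derivation $D$ passes through the commutative product, so when the outer bracket applies its twist to the inner bracket one simply differentiates each summand $D(a)\cdot[b,c]$, producing the $D^{2}$- and $D$-of-bracket contributions of blocks $(\mathrm{I})$--$(\mathrm{VIII})$. For $f$ no analogue is available: $f$ is \emph{not} assumed to respect $\cdot$, so a summand $f(a)\cdot[b,c]$ of an inner bracket cannot be pushed through $f$. The crux is therefore to show that the terms in which $f$ meets an inner bracket never need $f$ to be evaluated on a product; concretely, one keeps such an inner bracket opaque and uses the skew symmetry~\eqref{eq:7.3} just established together with the cyclic identities~\eqref{eq:5.3},~\eqref{eq:5.4},~\eqref{eq:5.6} and~\eqref{eq:7.17} to reorganize these contributions into expressions in which $f$ acts only on single elements. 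Once this reduction is carried out the remaining cancellations coincide with those of Theorem~\ref{thm:7.8}, and~\eqref{eq:7.4} follows.
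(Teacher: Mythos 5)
Your handling of \eqref{eq:7.2} and \eqref{eq:7.3} is fine, and your overall scheme for the Jacobi identity \eqref{eq:7.4} (expand via \eqref{eq:7.12}, group into blocks, kill them with \eqref{eq:4.1}, \eqref{eq:4.2}, \eqref{eq:4.4}, \eqref{eq:5.1}, \eqref{eq:5.3}, \eqref{eq:5.4}) is indeed the paper's scheme, modelled on Theorem \ref{thm:7.8}. But the step you yourself single out as the crux is exactly where your proposal breaks down, and the workaround you sketch cannot be carried out. In each of the four outer brackets occurring in \eqref{eq:7.4}, the third summand of \eqref{eq:7.12} produces a term of the form $f\a^{-1}\b(W)\cdot[\b(\cdot),\a(\cdot)]$, where $f$ is applied to the whole inner ternary bracket $W$, and $W$ is itself a sum of three products $f(a)\cdot[b,c]$. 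These four terms involve four \emph{distinct} inner brackets $W_{1},\dots,W_{4}$ and four \emph{distinct} binary factors $[\b(u),\a(v)]$, $[\b(y),\a(z)]$, $[\b(x),\a(z)]$, $[\b(x),\a(y)]$, while every other term in the expansion has $f$ acting only on single elements. Consequently, if you keep $f(W_{i})$ ``opaque'' there is nothing these four terms can cancel against, and no application of \eqref{eq:7.3}, \eqref{eq:5.3}, \eqref{eq:5.4}, \eqref{eq:5.6} or \eqref{eq:7.17} can turn an unevaluated $f\big(\sum_{i} f(a_{i})\cdot[b_{i},c_{i}]\big)$ into an expression with $f$ on single elements. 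Your proposed ``reorganization'' is thus a restatement of the problem, not a proof step.

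What the paper actually does at this point is distribute $f$ across those products: it replaces $f\big(f(a)\cdot[b,c]\big)$ by $f^{2}(a)\cdot f([b,c])=-a\cdot[f(b),f(c)]$, which is how terms such as $(\a^{-1}\b^{2}(x)\cdot[f\a^{-1}\b(u),f(v)])\cdot[\b(y),\a(z)]$ in its blocks arise; after that, the cancellations proceed essentially as you describe. In other words, the paper's computation tacitly uses $f(a\cdot b)=f(a)\cdot f(b)$, a hypothesis that does not appear in the statement of the theorem. You were right to notice that multiplicativity of $f$ with respect to $\cdot$ is not assumed --- that observation in fact exposes an omission in the paper's own hypotheses --- but the correct response is to add (or flag) that assumption, not to claim the offending terms can be cancelled while $f$ of a product is left unevaluated. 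As it stands, your proof of \eqref{eq:7.4} is incomplete at its decisive step.
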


 \begin{proof} We only do the following calculations and others are direct. For all $x, y, z, u, v\in L$, by Eqs.(\ref{eq:7.11}) and (\ref{eq:7.12}), one gets {\small
 \begin{eqnarray*}
 &&\hspace{-15mm}[\b^{2}(y),\b^{2}(z),[\b(u),\b(v),\a(x)]]
 -[\b^{2}(x),\b^{2}(z),[\b(u),\b(v),\a(y)]]+[\b^{2}(x),\b^{2}(y),[\b(u),\b(v),\a(z)]]\\
 &&-[\b^{2}(u),\b^{2}(v),[\b(x),\b(y),\a(z)]]\\
 &=&\hspace{-3mm}\Big(\hspace{-1.5mm}-(\a^{-1}\b^{2}(x)\cdot[f\a^{-1}\b(u),f(v)])
 \cdot[\b(y),\a(z)]+
 (\a^{-1}\b^{2}(y)\cdot[f\a^{-1}\b(u),f(v)])\cdot[\b(x),\a(z)]\\
 &&\hspace{-3mm}-(\a^{-1}\b^{2}(z)\cdot[f\a^{-1}\b(u),f(v)])\cdot[\b(x),\a(y)]\Big)
 +\Big(\hspace{-1.5mm}-f\b^{2}(z)\cdot[f\a^{-1}\b(y)\cdot[\a^{-1}(u),\b^{-1}(v)],\a(x)]\\
 &&\hspace{-3mm}+f\b^{2}(y)\cdot[f\a^{-1}\b(z)\cdot[\a^{-1}(u),\b^{-1}(v)],\a(x)] +(\a^{-1}\b^{2}(x)\cdot[f\a^{-1}\b(y),f(z)])\cdot[\b(u),\a(v)]\Big)\\
 &&\hspace{-3mm}+\Big(f\b^{2}(x)\cdot[\b(y),f(z)\cdot[\b^{-1}(u),\a\b^{-2}(v)]]
 +(\a^{-1}\b^{2}(y)\cdot[f\a^{-1}\b(z),f(x)])\cdot[\b(u),\a(v)]\\
 &&\hspace{-3mm}+f\b^{2}(z)\cdot[f\a^{-1}\b(x)\cdot[\a^{-1}(u),\b^{-1}(v)],\a(y)]\Big)
 +\Big(f\b^{2}(y)\cdot[\b(z),f(x)\cdot[\b^{-1}(u),\a\b^{-2}(v)]]\\
 &&\hspace{-3mm}-f\b^{2}(x)\cdot[\b(z),f(y)\cdot[\b^{-1}(u),\a\b^{-2}(v)]] +(\a^{-1}\b^{2}(z)\cdot[f\a^{-1}\b(x),f(y)])\cdot[\b(u),\a(v)]\Big)\\
 &&\hspace{-3mm}+\Big(\hspace{-1.5mm}-(\a^{-1}\b^{2}(v)\cdot[f\a^{-1}\b(x),f(u)])\cdot[\b(y),\a(z)]
 -f\b^{2}(x)\cdot[\b(z),f(u)\cdot[\b^{-1}(v),\a\b^{-2}(y)]]\\
 &&\hspace{-3mm}-f\b^{2}(u)\cdot[\b(v),f(x)\cdot[\b^{-1}(y),\a\b^{-2}(z)]]
 +f\b^{2}(x)\cdot[\b(y),f(u)\cdot[\b^{-1}(v),\a\b^{-2}(z)]]\Big)\\
 &&\hspace{-3mm}+\Big(f\b^{2}(y)\cdot[\b(z),f(u)\cdot[\b^{-1}(v),\a\b^{-2}(x)]]
 +(\a^{-1}\b^{2}(v)\cdot[f\a^{-1}\b(y),f(u)])\cdot[\b(x),\a(z)]\\
 &&\hspace{-3mm}+f\b^{2}(y)\cdot[f\a^{-1}\b(u)\cdot[\a^{-1}(v),\b^{-1}(z)],\a(x)]
 -f\b^{2}(u)\cdot[\b(v),f(y)\cdot[\b^{-1}(z),\a\b^{-2}(x)]]\Big)\\
 &&\hspace{-3mm}+\Big(f\b^{2}(z)\cdot[f\a^{-1}\b(u)\cdot[\a^{-1}(v),\b^{-1}(x)],\a(y)]
 -f\b^{2}(z)\cdot[f\a^{-1}\b(u)\cdot[\a^{-1}(v),\b^{-1}(y)],\a(x)]\\
 &&\hspace{-3mm}-(\a^{-1}\b^{2}(v)\cdot[f\a^{-1}\b(z),f(u)])\cdot[\b(x),\a(y)]
 -f\b^{2}(u)\cdot[\b(v),f(z)\cdot[\b^{-1}(x),\a\b^{-2}(y)]]\Big)\\
 &&\hspace{-3mm}+\Big(\hspace{-1.5mm}-(\a^{-1}\b^{2}(u)\cdot[f\a^{-1}\b(v),f(x)])\cdot[\b(y),\a(z)]
 -f\b^{2}(x)\cdot[\b(z),f(v)\cdot[\b^{-1}(y),\a\b^{-2}(u)]]\\
 &&\hspace{-3mm}+f\b^{2}(x)\cdot[\b(y),f(v)\cdot[\b^{-1}(z),\a\b^{-2}(u)]]
 -f\b^{2}(v)\cdot[f\a^{-1}\b(x)\cdot[\a^{-1}(y),\b^{-1}(z)],\a(u)]\Big)\\
 &&\hspace{-3mm}+\Big(f\b^{2}(y)\cdot[\b(z),f(v)\cdot[\b^{-1}(x),\a\b^{-2}(u)]]
 +(\a^{-1}\b^{2}(u)\cdot[f\a^{-1}\b(v),f(y)])\cdot[\b(x),\a(z)]\\
 &&\hspace{-3mm}+f\b^{2}(y)\cdot[f\a^{-1}\b(v)\cdot[\a^{-1}(z),\b^{-1}(u)],\a(x)]
 -f\b^{2}(v)\cdot[f\a^{-1}\b(y)\cdot[\a^{-1}(z),\b^{-1}(x)],\a(u)]\Big)\\
 &&\hspace{-3mm}+\Big(f\b^{2}(z)\cdot[f\a^{-1}\b(v)\cdot[\a^{-1}(x),\b^{-1}(u)],\a(y)]
 -f\b^{2}(z)\cdot[f\a^{-1}\b(v)\cdot[\a^{-1}(y),\b^{-1}(u)],\a(x)]\\
 &&\hspace{-3mm}-(\a^{-1}\b^{2}(u)\cdot[f\a^{-1}\b(v),f(z)])\cdot[\b(x),\a(y)]
 -f\b^{2}(v)\cdot[f\a^{-1}\b(z)\cdot[\a^{-1}(x),\b^{-1}(y)],\a(u)]\Big)\\
 &\stackrel{\bigtriangleup}=&\hbox{(I)}+\hbox{(II)}+\hbox{(III)}+\hbox{(IV)}
 +\hbox{(V)}+\hbox{(VI)}+\hbox{(VII)}+\hbox{(VIII)}+\hbox{(IX)}+\hbox{(X)}.
 \end{eqnarray*}}
 \vskip-5mm
 While {\small
 \begin{eqnarray*}
 \hbox{(I)}\hspace{-8mm}&=&\hspace{-8mm}-(\a^{-1}\b^{2}(x)\cdot[f\a^{-1}\b(u),f(v)])
 \cdot[\b(y),\a(z)]+(\a^{-1}\b^{2}(y)\cdot[f\a^{-1}\b(u),f(v)])\cdot[\b(x),\a(z)]\\
 &&-(\a^{-1}\b^{2}(z)\cdot[f\a^{-1}\b(u),f(v)])\cdot[\b(x),\a(y)]\\
 &\stackrel{(\ref{eq:4.1})(\ref{eq:4.2})(\ref{eq:4.4})(\ref{eq:5.3})}=&0.
 \end{eqnarray*}}
 \vskip-5mm
 One can calculate as follows: {\small
 \begin{eqnarray*}
 &&\hspace{-18mm}(\a^{-1}\b^{2}(x)\cdot[f\a^{-1}\b(y),f(z)])\cdot[\b(u),\a(v)]\\
 &=&2(\a^{-1}\b^{2}(x)\cdot[f\a^{-1}\b(y),f(z)])\cdot[\b(u),\a(v)]-
 (\a^{-1}\b^{2}(x)\cdot[f\a^{-1}\b(y),f(z)])\cdot[\b(u),\a(v)]\\
 &\stackrel{(\ref{eq:4.1})(\ref{eq:4.2})}=&2\b^{2}(x)\cdot([\a^{-1}\b(u),v]\cdot[f(y),f\a\b^{-1}(z)])
 -(\a^{-1}\b^{2}(x)\cdot[f\a^{-1}\b(y),f(z)])\cdot[\b(u),\a(v)]\\
 &\stackrel{(\ref{eq:5.1})(\ref{eq:5.3})}=&\b^{2}(x)\cdot[[\a^{-2}\b(u),\a^{-1}(v)]\cdot f(y),f\a(z)]+\b^{2}(x)\cdot[f\b(y),[\a^{-1}(u),\b^{-1}(v)]\cdot f\a\b^{-1}(z)]\\
 &&+(f\a^{-1}\b^{2}(y)\cdot[f\a^{-1}\b(z),x])\cdot[\b(u),\a(v)]
 +(f\a^{-1}\b^{2}(z)\cdot[\a^{-1}\b(x),f(y)])\cdot[\b(u),\a(v)],
 \end{eqnarray*}}
 then based on the above equation, we have {\small
 \begin{eqnarray*}
 \hbox{(II)}\hspace{-6mm}&=&\hspace{-6mm}-f\b^{2}(z)\cdot[f\a^{-1}\b(y)\cdot[\a^{-1}(u),\b^{-1}(v)],\a(x)]+\b^{2}(x)\cdot[[\a^{-2}\b(u),\a^{-1}(v)]\cdot f(y),f\a(z)]\\
 &&\hspace{-6mm}+\b^{2}(x)\cdot[f\b(y),[\a^{-1}(u),\b^{-1}(v)]\cdot f\a\b^{-1}(z)]+f\b^{2}(y)\cdot[f\a^{-1}\b(z)\cdot[\a^{-1}(u),\b^{-1}(v)],\a(x)]\\
 &&\hspace{-6mm}+(f\a^{-1}\b^{2}(y)\cdot[f\a^{-1}\b(z),x])\cdot[\b(u),\a(v)]+(f\a^{-1}\b^{2}(z)\cdot[\a^{-1}\b(x),f(y)])\cdot[\b(u),\a(v)]\\
 &\stackrel{(\ref{eq:5.3})}=&\hspace{-3mm}-(f\a^{-1}\b^{2}(z)\cdot[\a^{-1}\b(u),v])\cdot[\b(x),f\a(y)]-(f\a^{-1}\b^{2}(y)\cdot[\a^{-1}\b(u),v])\cdot[f\b(z),\a(x)]\\
 &&+(f\a^{-1}\b^{2}(y)\cdot[f\a^{-1}\b(z),x])\cdot[\b(u),\a(v)]+(f\a^{-1}\b^{2}(z)\cdot[\a^{-1}\b(x),f(y)])\cdot[\b(u),\a(v)]\\
 &\stackrel{(\ref{eq:4.1})(\ref{eq:4.2})}=&\hspace{-2mm}-f\b^{2}(z)\cdot([\a^{-1}\b(u),v]\cdot[x,f\a\b^{-1}(y)])-f\b^{2}(y)\cdot([\a^{-1}\b(u),v]\cdot[f(z),\a\b^{-1}(x)])\\
 &&\hspace{-6mm}+f\b^{2}(y)\cdot([\a^{-1}\b(u),v]\cdot[f(z),\a\b^{-1}(x)])+f\b^{2}(z)\cdot([\a^{-1}\b(u),v]\cdot[x,f\a\b^{-1}(y)])\\
 &=&\hspace{-6mm}0.
 \end{eqnarray*}}
 Similarly, $\hbox{(III)=(IV)}=0$.

 Furthermore, one can obtain {\small
 \begin{eqnarray*}
 \hbox{(V)}\hspace{-2mm}&=&\hspace{-2mm}-(\a^{-1}\b^{2}(v)\cdot[f\a^{-1}\b(x),f(u)])\cdot[\b(y),\a(z)]-f\b^{2}(x)\cdot[\b(z),f(u)\cdot[\b^{-1}(v),\a\b^{-2}(y)]]\\
 &&-f\b^{2}(u)\cdot[\b(v),f(x)\cdot[\b^{-1}(y),\a\b^{-2}(z)]]+f\b^{2}(x)\cdot[\b(y),f(u)\cdot[\b^{-1}(v),\a\b^{-2}(z)]]\\
 &\stackrel{(\ref{eq:5.4})}=&f\b^{2}(u)\cdot[f\a^{-1}\b(x)\cdot[\a^{-1}(y),\b^{-1}(z)],\a(v)]-(\a^{-1}\b^{2}(v)\cdot[f\a^{-1}\b(x),f(u)])\cdot[\b(y),\a(z)]\\
 &&-f\b^{2}(x)\cdot[f\a^{-1}\b(u)\cdot[\a^{-1}(y),\b^{-1}(z)],\a(v)].
 \end{eqnarray*}}
 Therefore \hbox{(V)}=0 since \hbox{(II)}=0. Likely, \hbox{(VI)=(VII)=(VIII)=(IX)=(X)}=0. Thus, Eq.(\ref{eq:7.4}) holds. The proof is completed.
 \end{proof}

\subsection{Strong BP 3-Lie algebras from strong BP algebras}

 \begin{defi}\label{de:7.4} A {\bf BP 3-Lie algebra} is a 5-tuple $(L, \cdot, [\ ,\ ,\ ], \a, \b)$, where $(L, \cdot, \a, \b)$ is a BiHom-commutative algebra and $(L, [\ ,\ ,\ ], \a, \b)$ is a 3-BiHom-Lie algebra such that
 \begin{eqnarray}
 &[\a\b(x),\a\b(y),u\cdot v]=[\b(x),\b(y),u]\cdot\b(v)+\b(u)\cdot[\a(x),\a(y),v],\ \forall x,y,u,v\in L.&\label{eq:7.6}
 \end{eqnarray}

 Furthermore, if for all $x, y, z, u, v, w \in L$,{\small
 \begin{eqnarray}
 &\hspace{-8mm}0=[\b^{2}(x),\b^{2}(y),\a\b(u)]\cdot[\a\b(z),\a\b(v),\a^{2}(w)]
 -[\b^{2}(x),\b^{2}(y),\a\b(z)]\cdot[\a\b(u),\a\b(v),\a^{2}(w)]&\nonumber\\
 &\hspace{-8mm}+[\b^{2}(x),\b^{2}(y),\a\b(w)]\cdot[\a\b(z),\a\b(u),\a^{2}(v)]
 -[\b^{2}(x),\b^{2}(y),\a\b(v)]\cdot[\a\b(z),\a\b(u),\a^{2}(w)],&\label{eq:7.7}
 \end{eqnarray}}
 then we call the BP 3-Lie algebra $(L, \cdot, [\ ,\ ,\ ], \a, \b)$ {\bf strong}.
 \end{defi}

 \begin{pro}\label{pro:7.5} Let $(L, \cdot, [\ ,\ ,\ ])$ be a Poisson 3-Lie algebra \cite{D} and $\a,\b:L\lr L$ two commuting linear maps such that $\a([x,y,z])=[\a(x),\a(y),\a(z)], \b([x,y,z])=[\b(x),\b(y),\b(z)]$. Then $(L, \cdot'=\cdot\circ(\a\o\b), [\ ,\ ,\ ]'=[\ ,\ ,\ ]\circ(\a\o\a\o\b), \a, \b)$ is a BP 3-Lie algebra, called the ``Yau twist" of $(L, \cdot, [\ ,\ ,\ ]).$
 \end{pro}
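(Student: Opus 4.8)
The plan is to verify, for the twisted tuple $(L,\cdot',[\,,\,,\,]',\a,\b)$ with $x\cdot' y=\a(x)\cdot\b(y)$ and $[x,y,z]'=[\a(x),\a(y),\b(z)]$, the three requirements of a BP $3$-Lie algebra in Definition \ref{de:7.4}: that $(L,\cdot',\a,\b)$ is BiHom-commutative, that $(L,[\,,\,,\,]',\a,\b)$ is a $3$-BiHom-Lie algebra (Definition \ref{de:7.1}), and that the compatibility Eq.\eqref{eq:7.6} holds. Throughout I use that $\a,\b$ commute and are multiplicative for both operations: multiplicativity for $[\,,\,,\,]$ is in the hypothesis, and multiplicativity for $\cdot$ is the standing assumption of every Yau twist in this paper (cf. Proposition \ref{pro:5.2}).

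The first requirement is immediate: since $(L,\cdot)$ is commutative associative and $\a,\b$ are commuting algebra morphisms, Remark \ref{rmk:de:4.1}(2) yields that $(L,\cdot',\a,\b)$ is BiHom-commutative, with no computation. For the second requirement, the multiplicativity identities Eq.\eqref{eq:7.2} and the BiHom-skew-symmetry Eq.\eqref{eq:7.3} follow directly from $\a,\b$ being morphisms and from the total skew-symmetry of the original ternary bracket; for instance $[\b(x),\b(y),\a(z)]'=[\a\b(x),\a\b(y),\a\b(z)]$ is fully skew in its arguments, which gives Eq.\eqref{eq:7.3} at once.

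The core of the argument is the $3$-BiHom-Jacobi identity Eq.\eqref{eq:7.4}. My approach is to push all twists inward using multiplicativity and $\a\b=\b\a$, so that each nested twisted bracket becomes a value of the original bracket with all five entries carrying the common weight $\a\b^{2}$. One computes that the left side of Eq.\eqref{eq:7.4} for $[\,,\,,\,]'$ equals $[\a\b^{2}(u),\a\b^{2}(v),[\a\b^{2}(x),\a\b^{2}(y),\a\b^{2}(z)]]$, and that the three terms on its right become $[\a\b^{2}(y),\a\b^{2}(z),[\a\b^{2}(u),\a\b^{2}(v),\a\b^{2}(x)]]$ and its two analogues. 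The claim then reduces to the fundamental identity of the Poisson $3$-Lie algebra
\begin{eqnarray*}
&[a,b,[c,d,e]]=[[a,b,c],d,e]+[c,[a,b,d],e]+[c,d,[a,b,e]]&
\end{eqnarray*}
applied with $(a,b)=(\a\b^{2}(u),\a\b^{2}(v))$ and $(c,d,e)=(\a\b^{2}(x),\a\b^{2}(y),\a\b^{2}(z))$; the three summands on its right are matched to the three terms of Eq.\eqref{eq:7.4} using the cyclic invariance $[A,B,C]=[B,C,A]$ to move the nested bracket into the last slot and to produce the required signs.

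Finally, for the compatibility Eq.\eqref{eq:7.6} I expand $[\a\b(x),\a\b(y),u\cdot' v]'=[\a^{2}\b(x),\a^{2}\b(y),\a\b(u)\cdot\b^{2}(v)]$ and apply the original Leibniz rule $[a,b,c\cdot d]=[a,b,c]\cdot d+c\cdot[a,b,d]$, splitting it into $[\a^{2}\b(x),\a^{2}\b(y),\a\b(u)]\cdot\b^{2}(v)+\a\b(u)\cdot[\a^{2}\b(x),\a^{2}\b(y),\b^{2}(v)]$; expanding the right-hand side of Eq.\eqref{eq:7.6}, namely $[\b(x),\b(y),u]'\cdot'\b(v)$ and $\b(u)\cdot'[\a(x),\a(y),v]'$, gives exactly these two summands. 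The only genuine difficulty is bookkeeping: every term must first be normalized to a common power of $\a$ and $\b$ before the underlying $3$-Lie axioms apply, and in the Jacobi step the signs must be tracked carefully through skew-symmetry. Since strongness Eq.\eqref{eq:7.7} is not asserted here, there is no need to twist that identity, which keeps the proof short and self-contained.
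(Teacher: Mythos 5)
Your proof is correct, and for the compatibility condition Eq.(\ref{eq:7.6}) it performs exactly the computation that constitutes the whole of the paper's written proof: push the twists through both sides and invoke the Leibniz rule $[a,b,c\cdot d]=[a,b,c]\cdot d+c\cdot[a,b,d]$ of the original Poisson 3-Lie algebra, landing on $[\a^{2}\b(x),\a^{2}\b(y),\a\b(u)]\cdot\b^{2}(v)+\a\b(u)\cdot[\a^{2}\b(x),\a^{2}\b(y),\b^{2}(v)]$ from both directions. The genuine difference is how the 3-BiHom-Lie structure of $(L,[\ ,\ ,\ ]',\a,\b)$ is obtained: the paper disposes of it (together with the BiHom-commutativity of $\cdot'$) by citing \cite[Theorem 1.12]{KMS} and Remark \ref{rmk:de:4.1}, whereas you re-derive the ternary Yau-twist statement inline. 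Your reduction is the right one: after normalization every entry of every nested twisted bracket carries the common weight $\a\b^{2}$, so the twisted identity Eq.(\ref{eq:7.4}) reads $[U,V,[X,Y,Z]]=[Y,Z,[U,V,X]]-[X,Z,[U,V,Y]]+[X,Y,[U,V,Z]]$ with $U=\a\b^{2}(u)$, etc., and this is the fundamental (Filippov) identity rewritten via the even cyclic shift $[[U,V,X],Y,Z]=[Y,Z,[U,V,X]]$ and the sign-reversing transposition $[X,[U,V,Y],Z]=-[X,Z,[U,V,Y]]$; the skew-symmetry Eq.(\ref{eq:7.3}) and multiplicativity Eq.(\ref{eq:7.2}) are immediate, as you say. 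What the paper's route buys is brevity; what yours buys is a self-contained argument that does not depend on an external theorem and makes the sign bookkeeping explicit. You also correctly flag a point the paper leaves tacit: the statement of the proposition omits the hypotheses $\a(x\cdot y)=\a(x)\cdot\a(y)$ and $\b(x\cdot y)=\b(x)\cdot\b(y)$, which both proofs need (the paper uses them silently through Remark \ref{rmk:de:4.1} and inside its displayed computation), so reading them in from the pattern of Proposition \ref{pro:5.2} is the intended interpretation.
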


 \begin{proof} By \cite[Theorem 1.12]{KMS} and Remark \ref{rmk:de:4.1}, we only need to check that Eq.(\ref{eq:7.6}) holds. For all $x,y,u,v\in L$, one calculates
 \begin{eqnarray*}
 &&\hspace{-3mm}[\b(x),\b(y),u]'\cdot'\b(v)+\b(u)\cdot'[\a(x),\a(y),v]'\\
 &&=[\a^{2}\b(x),\a^{2}\b(y),\a\b(u)]\cdot\b^{2}(v)+\a\b(u)\cdot[\a^{2}\b(x),\a^{2}\b(y),\b^{2}(v)]\\
 &&=[\a^{2}\b(x),\a^{2}\b(y),\a\b(u)\cdot \b^{2}(v)]\\
 &&=[\a\b(x),\a\b(y),u\cdot' v]',
 \end{eqnarray*}
 as desired.
 \end{proof}

 \begin{thm}\label{thm:7.7} Let $(L, \cdot, [,], \a, \b)$ be a regular strong BP algebra and $D$ be a derivation of $(L, \cdot, \a, \b)$ and $(L, [,], \a, \b)$ such that any two of the maps $\a, \b, D$ commute. Define a ternary operation on $L$ by Eq.(\ref{eq:7.8}). Then $(L, \cdot, [\ ,\ ,\ ], \a, \b)$ is a strong BP 3-Lie algebra.
 \end{thm}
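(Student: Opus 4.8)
The plan is to verify, one by one, the four requirements in Definition \ref{de:7.4}: that $(L,\cdot,\a,\b)$ is BiHom-commutative, that $(L,[\ ,\ ,\ ],\a,\b)$ is a $3$-BiHom-Lie algebra, the BP $3$-Lie Leibniz rule \eqref{eq:7.6}, and the strong condition \eqref{eq:7.7}. The first holds by hypothesis. For the $3$-BiHom-Lie structure, the multiplicativity \eqref{eq:7.2} is immediate: since $\a,\b$ commute with $D$ and are morphisms for both $\cdot$ and $[\ ,\ ]$, they pass straight through the defining formula \eqref{eq:7.8}. The BiHom-skew-symmetry \eqref{eq:7.3} follows by rewriting the three summands of \eqref{eq:7.8} and applying BiHom-commutativity \eqref{eq:4.2} and the skew-symmetry \eqref{eq:4.4} of $[\ ,\ ]$. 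Thus the genuine computations are the $3$-BiHom-Jacobi identity \eqref{eq:7.4} together with the compatibility identities \eqref{eq:7.6} and \eqref{eq:7.7}.

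For \eqref{eq:7.4} I would follow the template of Theorem \ref{thm:7.8}, with the BiHom-Leibniz rule \eqref{eq:4.6} playing the role that the transposed compatibility \eqref{eq:5.1} played there. First I would establish the analogue of the symmetrization identity \eqref{eq:7.9}: expand $\circlearrowleft_{x,y,z}D\a^{-1}\b^{2}(x)\cdot D([\a^{-1}\b(y),z])$ by the derivation property of $D$ on both $\cdot$ and $[\ ,\ ]$, then collapse the brackets using \eqref{eq:4.6} and \eqref{eq:4.2}. Substituting this into the $3$-BiHom-Jacobi combination built from \eqref{eq:7.8} produces a long sum that I would group into blocks (I)--(VIII) as in Theorem \ref{thm:7.8}. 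The decisive difference lies in the cancellation mechanism: in the TBP case several blocks closed using the cyclic identity \eqref{eq:5.3}, which is \emph{not} available for a BP algebra; here the corresponding blocks must instead be annihilated by the strong condition \eqref{eq:5.6}, which holds by hypothesis (Definition \ref{de:7.3}). I also expect to need a BP-version of \eqref{eq:7.17}, obtained by the same specialization of the integer exponents but starting from the Leibniz-type identity rather than from \eqref{eq:5.1}.

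For the BP $3$-Lie Leibniz rule \eqref{eq:7.6} I would substitute \eqref{eq:7.8} into the left-hand side $[\a\b(x),\a\b(y),u\cdot v]$, split $D(u\cdot v)=D(u)\cdot v+u\cdot D(v)$ and split the brackets $[-,\b^{-1}(u\cdot v)]$ and $[\a^{-1}(u\cdot v),-]$ by \eqref{eq:4.6}, and then reassemble the outcome as $[\b(x),\b(y),u]\cdot\b(v)+\b(u)\cdot[\a(x),\a(y),v]$ with the help of associativity \eqref{eq:4.1} and BiHom-commutativity \eqref{eq:4.2}. For the strong condition \eqref{eq:7.7} I would expand each ternary bracket occurring in a product via \eqref{eq:7.8} and reduce the four-term cyclic combination to the binary strong condition \eqref{eq:5.6}; this is precisely the step that forces the input algebra to be strong and explains why a plain regular BP algebra would not suffice.

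The hard part will be \eqref{eq:7.4}. The computation is long, every cancellation must be tracked through the $\a$, $\b$ and $D$ twists, and the real subtlety is that the terms which vanished ``for free'' by \eqref{eq:5.3} in the transposed setting now survive and must be removed using \eqref{eq:5.6}; arranging the twisting exponents so that \eqref{eq:5.6} applies verbatim is the crux of the argument. By contrast \eqref{eq:7.6} and \eqref{eq:7.7} are routine, if tedious, once the derivation property and \eqref{eq:4.6} are in hand.
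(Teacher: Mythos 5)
Your overall skeleton (verify \eqref{eq:7.2}, \eqref{eq:7.3}, \eqref{eq:7.4}, then \eqref{eq:7.6} and \eqref{eq:7.7}) agrees with the paper's three-step proof, and your outlines for \eqref{eq:7.6} and \eqref{eq:7.7} do match the paper's Steps 2 and 3. But the plan for the step you yourself call the crux, the $3$-BiHom-Jacobi identity \eqref{eq:7.4}, has a genuine gap: the ``analogue of the symmetrization identity \eqref{eq:7.9}'' that you intend to establish and then substitute is \emph{false} in a strong BP algebra. Take $L=K[a,b]$ with the symplectic Poisson bracket $[f,g]=\partial_a f\,\partial_b g-\partial_b f\,\partial_a g$, with $\a=\b=\id_L$ and $D=\partial_a$. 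This is a regular strong BP algebra: Eq.~\eqref{eq:5.6} with $\a=\b=\id_L$ reads $[x,y]\cdot[h,z]+[y,z]\cdot[h,x]+[z,x]\cdot[h,y]=0$, which is the Pl\"ucker relation for $2\times 2$ determinants; and $D$ is a derivation of both operations commuting with the structure maps. Yet for $x=a^{2}$, $y=b^{2}$, $z=ab$, the left-hand side of \eqref{eq:7.9} equals $4b^{2}$ while the right-hand side equals $2b^{2}$. Hence no argument can produce that identity under the hypotheses of Theorem \ref{thm:7.7}.

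The failure is structural, and it also invalidates the mechanism you describe: in Theorem \ref{thm:7.8}, Eq.~\eqref{eq:7.9} is proved by applying \eqref{eq:5.1}, which rewrites a \emph{product} $2\a\b(x)\cdot[y,z]$ as a sum of brackets; the BiHom-Leibniz rule \eqref{eq:4.6} goes in the opposite direction --- it expands a bracket $[\a\b(x),y\cdot z]$ into products, and gives no way to ``collapse'' the terms $D\a^{-1}\b^{2}(x)\cdot[D\a^{-1}\b(y),z]$ produced by differentiating \eqref{eq:7.8}. Accordingly, the paper's proof of Theorem \ref{thm:7.7} never establishes or uses any analogue of \eqref{eq:7.9} (nor of \eqref{eq:7.17}): it expands the Jacobi combination directly into \emph{fourteen} blocks (not eight as in Theorem \ref{thm:7.8}), applies \eqref{eq:4.6} only to the nested brackets-of-products such as $[\b(v),D(x)\cdot[\b^{-1}(y),\a\b^{-2}(z)]]$, and then annihilates the surviving blocks --- the $D^{2}$-blocks and the mixed $D$--$D$ blocks, which in the transposed case were removed via \eqref{eq:7.9} and \eqref{eq:5.3} --- using the strongness condition \eqref{eq:5.6} together with \eqref{eq:4.7}, \eqref{eq:4.2} and \eqref{eq:4.4}. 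Your proof of \eqref{eq:7.4} would have to be reorganized along these lines; as written, its first move is impossible.
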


 \begin{proof} We take three steps to prove this result.

 {\bf Step 1.} $(L, [\ ,\ ,\ ], \a, \b)$ is a 3-BiHom-Lie algebra. In this step, we only verify Eq.(\ref{eq:7.4}) below, and others are direct. For all $x, y, z, u, v, w \in L$, we have {\small
 \begin{eqnarray*}
 &&\hspace{-9mm}[\b^{2}(u),\b^{2}(v),[\b(x),\b(y),\a(z)]]
 -[\b^{2}(y),\b^{2}(z),[\b(u),\b(v),\a(x)]]+[\b^{2}(x),\b^{2}(z),[\b(u),\b(v),\a(y)]]\\
 &&-[\b^{2}(x),\b^{2}(y),[\b(u),\b(v),\a(z)]]\\
 &=&\hspace{-2mm}\Big((D^{2}\a^{-1}\b^{2}(x)\cdot[\a^{-1}\b(y),z])\cdot[\b(u),\a(v)]
 +(D^{2}\a^{-1}\b^{2}(y)\cdot[\a^{-1}\b(z),x])\cdot[\b(u),\a(v)]+(D^{2}\a^{-1}\b^{2}(z)\\
 &&\hspace{-2mm}\cdot[\a^{-1}\b(x),y])\cdot[\b(u),\a(v)]
 -(D^{2}\a^{-1}\b^{2}(x)\cdot[\a^{-1}\b(u),v])\cdot[\b(y),\a(z)]
 +(D^{2}\a^{-1}\b^{2}(y)\cdot[\a^{-1}\b(u),v])\\
 &&\hspace{-2mm}\cdot[\b(x),\a(z)]
 -(D^{2}\a^{-1}\b^{2}(z)\cdot[\a^{-1}\b(u),v])\cdot[\b(x),\a(y)]\Big)
 +\Big(\hspace{-1.5mm}-(D^{2}\a^{-1}\b^{2}(u)\cdot[\a^{-1}\b(v),x])\cdot[\b(y),\a(z)]\\
 &&\hspace{-2mm}+(D^{2}\a^{-1}\b^{2}(u)\cdot[\a^{-1}\b(v),y])\cdot[\b(x),\a(z)] -(D^{2}\a^{-1}\b^{2}(u)\cdot[\a^{-1}\b(v),z])\cdot[\b(x),\a(y)]-(D^{2}\a^{-1}\b^{2}(v)\\
 &&\hspace{-2mm}\cdot[\a^{-1}\b(x),u])\cdot[\b(y),\a(z)]
 +(D^{2}\a^{-1}\b^{2}(v)\cdot[\a^{-1}\b(y),u])\cdot[\b(x),\a(z)] -(D^{2}\a^{-1}\b^{2}(v)\cdot[\a^{-1}\b(z),u])\\
 &&\hspace{-2mm}\cdot[\b(x),\a(y)]\Big)+\Big(\hspace{-1.5mm}-(D\a^{-1}\b^{2}(u)\cdot[D\a^{-1}\b(v),x])
 \cdot[\b(y),\a(z)]-(D\a^{-1}\b^{2}(v)\cdot[\a^{-1}\b(x),D(u)])\cdot[\b(y),\\
 &&\hspace{-2mm}\a(z)]+(D\a^{-1}\b^{2}(u)\cdot[D\a^{-1}\b(v),y])\cdot[\b(x),\a(z)]
 +(D\a^{-1}\b^{2}(v)\cdot[\a^{-1}\b(y),D(u)])\cdot[\b(x),\a(z)]\\
 &&\hspace{-2mm}-(D\a^{-1}\b^{2}(u)\cdot[D\a^{-1}\b(v),z])\cdot[\b(x),\a(y)]
 -(D\a^{-1}\b^{2}(v)\cdot[\a^{-1}\b(z),D(u)])\cdot[\b(x),\a(y)]\Big)+\Big(D\b^{2}(u)\\
 &&\hspace{-2mm}\cdot[\b(v),D(x)\cdot[\b^{-1}(y),\a\b^{-2}(z)]]-
 (D\a^{-1}\b^{2}(u)\cdot[\a^{-1}\b(v),D(x)])\cdot[\b(y),\a(z)]-(D\a^{-1}\b^{2}(x)\\
 &&\hspace{-2mm}\cdot[D\a^{-1}\b(u),v])\cdot[\b(y),\a(z)]
 +D\b^{2}(x)\cdot[\b(z),D(u)\cdot[\b^{-1}(v),\a\b^{-2}(y)]]
 -D\b^{2}(x)\cdot[\b(y),D(u)\cdot[\b^{-1}(v),\\
 &&\hspace{-2mm}\a\b^{-2}(z)]]\Big) +\Big(D\b^{2}(v)\cdot[D\a^{-1}\b(x)\cdot[\a^{-1}(y),\b^{-1}(z)],\a(u)]
 -(D\a^{-1}\b^{2}(v)\cdot[D\a^{-1}\b(x),u])\cdot[\b(y),\a(z)]\\
 &&\hspace{-2mm}-(D\a^{-1}\b^{2}(x)\cdot[\a^{-1}\b(u),D(v)])\cdot[\b(y),\a(z)]
 +D\b^{2}(x)\cdot[\b(z),D(v)\cdot[\b^{-1}(y),\a\b^{-2}(u)]]-D\b^{2}(x)\\
 &&\hspace{-2mm}\cdot[\b(y),D(v)\cdot[\b^{-1}(z),\a\b^{-2}(u)]]\Big) +\Big(D\b^{2}(u)\cdot[\b(v),D(y)\cdot[\b^{-1}(z),\a\b^{-2}(x)]]
 -D\b^{2}(y)\cdot[\b(z),D(u)\\
 &&\hspace{-2mm}\cdot[\b^{-1}(v),\a\b^{-2}(x)]]+(D\a^{-1}\b^{2}(u)\cdot[\a^{-1}\b(v),D(y)])\cdot[\b(x),\a(z)]
 +(D\a^{-1}\b^{2}(y)\cdot[D\a^{-1}\b(u),v])\cdot[\b(x),\\
 &&\hspace{-2mm}\a(z)]-D\b^{2}(y)\cdot[D\a^{-1}\b(u)\cdot[\a^{-1}(v),\b^{-1}(z)],\a(x)]\Big) +\Big(D\b^{2}(v)\cdot[D\a^{-1}\b(y)\cdot[\a^{-1}(z),\b^{-1}(x)],\a(u)]\\
 &&\hspace{-2mm}-D\b^{2}(y)\cdot[\b(z),D(v)\cdot[\b^{-1}(x),\a\b^{-2}(u)]]
 +(D\a^{-1}\b^{2}(v)\cdot[D\a^{-1}\b(y),u])\cdot[\b(x),\a(z)]
 +(D\a^{-1}\b^{2}(y)\\
 &&\hspace{-2mm}\cdot[\a^{-1}\b(u),D(v)])\cdot[\b(x),\a(z)]
 -D\b^{2}(y)\cdot[D\a^{-1}\b(v)\cdot[\a^{-1}(z),\b^{-1}(u)],\a(x)]\Big)
 +\Big(D\b^{2}(u)\cdot[\b(v),D(z)\\
 &&\hspace{-2mm}\cdot[\b^{-1}(x),\a\b^{-2}(y)]]
 +D\b^{2}(z)\cdot[D\a^{-1}\b(u)\cdot[\a^{-1}(v),\b^{-1}(x)],\a(y)]
 -D\b^{2}(z)\cdot[D\a^{-1}\b(u)\cdot[\a^{-1}(v),\\
 &&\hspace{-2mm}\b^{-1}(y)],\a(x)]
 -(D\a^{-1}\b^{2}(u)\cdot[\a^{-1}\b(v),D(z)])\cdot[\b(x),\a(y)] -(D\a^{-1}\b^{2}(z)\cdot[D\a^{-1}\b(u),v])\cdot[\b(x),\\
 &&\hspace{-2mm}\a(y)]\Big)+\Big(D\b^{2}(v)\cdot[D\a^{-1}\b(z)\cdot[\a^{-1}(x),\b^{-1}(y)],\a(u)]
 -D\b^{2}(z)\cdot[D\a^{-1}\b(v)\cdot[\a^{-1}(x),\b^{-1}(u)],\a(y)]\\
 &&\hspace{-2mm}+D\b^{2}(z)\cdot[D\a^{-1}\b(v)\cdot[\a^{-1}(y),\b^{-1}(u)],\a(x)]
 -(D\a^{-1}\b^{2}(v)\cdot[D\a^{-1}\b(z),u])\cdot[\b(x),\a(y)]\\
 &&\hspace{-2mm}-(D\a^{-1}\b^{2}(z)\cdot[\a^{-1}\b(u),D(v)])\cdot[\b(x),\a(y)]\Big)
 +\Big((D\a^{-1}\b^{2}(x)\cdot[D\a^{-1}\b(y),z])\cdot[\b(u),\a(v)]\\
 &&\hspace{-2mm}
 +(D\a^{-1}\b^{2}(y)\cdot[\a^{-1}\b(z),D(x)])\cdot[\b(u),\a(v)]
 -D\b^{2}(y)\cdot[\b(z),D(x)\cdot[\b^{-1}(u),\a\b^{-2}(v)]]
 +D\b^{2}(x)\\
 &&\hspace{-2mm}\cdot[\b(z),D(y)\cdot[\b^{-1}(u),\a\b^{-2}(v)]]\Big)
 +\Big((D\a^{-1}\b^{2}(x)\cdot[\a^{-1}\b(y),D(z)])\cdot[\b(u),\a(v)]
 -D\b^{2}(z)\cdot[D\a^{-1}\b(x)\\
 &&\hspace{-2mm}\cdot[\a^{-1}(u),\b^{-1}(v)],\a(y)]
 -D\b^{2}(x)\cdot[\b(y),D(z)\cdot[\b^{-1}(u),\a\b^{-2}(v)]]
 +(D\a^{-1}\b^{2}(z)\cdot[D\a^{-1}\b(x),y])\\
 &&\hspace{-2mm}\cdot[\b(u),\a(v)]\Big) +\Big((D\a^{-1}\b^{2}(z)\cdot[\a^{-1}\b(x),D(y)])\cdot[\b(u),\a(v)]
 +(D\a^{-1}\b^{2}(y)\cdot[D\a^{-1}\b(z),x])\cdot[\b(u),\\
 &&\hspace{-2mm}\a(v)]+D\b^{2}(z)\cdot[D\a^{-1}\b(y)\cdot[\a^{-1}(u),\b^{-1}(v)],\a(x)]
 +D\b^{2}(y)\cdot[D\a^{-1}\b(z)\cdot[\a^{-1}(u),\b^{-1}(v)],\a(x)]\Big)\\
 &\stackrel{\bigtriangleup}=&\hspace{-2mm}\hbox{(I)+(II)+(III)+(IV)+(V)+(VI)+(VII)+(VIII)+(IX)
 +(X)+(XI)+(XII)+(XIII)+(XIV)}.
 \end{eqnarray*}
 While
 \begin{eqnarray*}
 \hbox{(I)}\hspace{-2mm}&\stackrel{(\ref{eq:4.1})(\ref{eq:4.2})(\ref{eq:4.4})}=&\hspace{-2mm}0,\\
 \hbox{(II)}\hspace{-2mm}&\stackrel{(\ref{eq:4.1})(\ref{eq:4.2})(\ref{eq:4.4})}=&
 \hspace{-2mm}-D^{2}\b^{2}(u)\cdot([\a^{-1}\b(v),x]\cdot[y,\a\b^{-1}(z)]
 +[\a^{-1}\b(v),y]\cdot[z,\a\b^{-1}(x)] +[\a^{-1}\b(v),z]\\
 &&\hspace{-2mm}\cdot[x,\a\b^{-1}(y)])-D^{2}\b^{2}(v)\cdot([\a^{-1}\b(x),u]\cdot[y,\a\b^{-1}(z)]+[\a^{-1}\b(y),u]\cdot[z,\a\b^{-1}(x)]\\
 &&\hspace{-2mm}+[\a^{-1}\b(z),u]\cdot[x,\a\b^{-1}(y)])
 \stackrel{(\ref{eq:4.2})(\ref{eq:4.4})(\ref{eq:5.6})}=0.
 \end{eqnarray*}}
 Similarly, $\hbox{(III)}=0$.{\small
 \begin{eqnarray*}
 \hbox{(IV)}\hspace{-10mm}&=&\hspace{-10mm}D\b^{2}(u)\cdot[\b(v),D(x)
 \cdot[\b^{-1}(y),\a\b^{-2}(z)]] -(D\a^{-1}\b^{2}(u)\cdot[\a^{-1}\b(v),D(x)])\cdot[\b(y),\a(z)]\\
 &&\hspace{-10mm}-(D\a^{-1}\b^{2}(x)\cdot[D\a^{-1}\b(u),v])\cdot[\b(y),\a(z)] +D\b^{2}(x)\cdot[\b(z),D(u)\cdot[\b^{-1}(v),\a\b^{-2}(y)]]\\
 &&\hspace{-10mm}-D\b^{2}(x)\cdot[\b(y),D(u)\cdot[\b^{-1}(v),\a\b^{-2}(z)]]\\
 &\stackrel{(\ref{eq:4.2})(\ref{eq:4.6})}=&\hspace{-6mm}
 D\b^{2}(u)\cdot([\a^{-1}\b(v),D(x)]\cdot[y,\a\b^{-1}(z)]) +D\b^{2}(u)\cdot(D\b(x)\cdot[v,[\b^{-1}(y),\a\b^{-2}(z)]])\\
 &&\hspace{-8mm}-D\b^{2}(u)\cdot([\a^{-1}\b(v),D(x)]\cdot[y,\a\b^{-1}(z)]) -D\b^{2}(x)\cdot([D\a^{-1}\b(u),v]\cdot[y,\a\b^{-1}(z)])\\
 &&\hspace{-8mm}+D\b^{2}(x)\cdot([\a^{-1}\b(z),D(u)]\cdot[v,\a\b^{-1}(y)]) +D\b^{2}(x)\cdot(D\b(u)\cdot[z,[\b^{-1}(v),\a\b^{-2}(y)]])\\
 &&\hspace{-8mm}-D\b^{2}(x)\cdot([\a^{-1}\b(y),D(u)]\cdot[v,\a\b^{-1}(z)]) -D\b^{2}(x)\cdot(D\b(u)\cdot[y,[\b^{-1}(v),\a\b^{-2}(z)]])\\
 &\stackrel{(\ref{eq:4.1})(\ref{eq:4.2})(\ref{eq:4.4})}=&\hspace{-4mm}
 -D\b^{2}(x)\cdot([D\a^{-1}\b(u),v]\cdot[y,\a\b^{-1}(z)]
 +[\a^{-1}\b(z),D(u)]\cdot[y,\a\b^{-1}(v)]\\
 &&\hspace{-8mm}+[\a^{-1}\b(y),D(u)]\cdot[v,\a\b^{-1}(z)]) +(D\a^{-1}\b^{2}(u)\cdot D(x))\cdot([\b(v),[y,\a\b^{-1}(z)]]\\
 &&\hspace{-8mm}+[\b(z),[v,\a\b^{-1}(y)]]+[\b(y),[z,\a\b^{-1}(v)]])
 \stackrel{(\ref{eq:4.2})(\ref{eq:4.4})(\ref{eq:4.7})(\ref{eq:5.6})}=0.
 \end{eqnarray*}}
 Likely, $\hbox{(V)=(VI)=(VII)=(VIII)=(IX)=(X)=(XI)}=0$. {\small
 \begin{eqnarray*}
 \hbox{(XII)}\hspace{-6mm}
 &\stackrel{(\ref{eq:4.1})(\ref{eq:4.6})}=&\hspace{-6mm}
 D\b^{2}(x)\cdot([D\a^{-1}\b(y),z]\cdot[u,\a\b^{-1}(v)]) +D\b^{2}(y)\cdot([\a^{-1}\b(z),D(x)]\cdot[u,\a\b^{-1}(v)])\\
 &&-D\b^{2}(y)\cdot([\a^{-1}\b(z),D(x)]\cdot[u,\a\b^{-1}(v)]) -D\b^{2}(y)\cdot(D\b(x)\cdot[z,[\b^{-1}(u),\a\b^{-2}(v)]]\\
 &&+D\b^{2}(x)\cdot([\a^{-1}\b(z),D(y)]\cdot[u,\a\b^{-1}(v)]) +D\b^{2}(x)\cdot(D\b(y)\cdot[z,[\b^{-1}(u),\a\b^{-2}(v)]]\\
 &\stackrel{(\ref{eq:4.1})(\ref{eq:4.2})(\ref{eq:4.4})}=&0
 \end{eqnarray*}}
 By the same method, $\hbox{(XIII)=(XIV)}=0$. Thus, Eq.(\ref{eq:7.4}) holds.

 {\bf Step 2.} Eq.(\ref{eq:7.6}) can be checked as follows.{\small
 \begin{eqnarray*}
 [\a\b(x),\a\b(y),u\cdot v]
 \hspace{-9mm}&=&\hspace{-9mm}D\a\b(x)\cdot[\a(y),\b^{-1}(u\cdot v)]+D\a\b(y)\cdot[\a^{-1}(u\cdot v),\a^{2}\b^{-1}(x)]+(D\a^{-1}\b(u)\\
 &&\hspace{-4mm}\cdot\a^{-1}\b(v))\cdot[\a(x),\a^{2}\b^{-1}(y)]+
 (\a^{-1}\b(u)\cdot D\a^{-1}\b(v))\cdot[\a(x),\a^{2}\b^{-1}(y)]\\
 &\stackrel{(\ref{eq:4.4})(\ref{eq:4.6})}=&\hspace{-4mm}
 D\a\b(x)\cdot([y,\b^{-1}(u)]\cdot v+u\cdot [\a\b^{-1}(y),\b^{-1}(v)])-D\a\b(y)\cdot([x,\b^{-1}(u)]\cdot v\\
 &&\hspace{-4mm}+u\cdot [\a\b^{-1}(x),\b^{-1}(v)])+(D\a^{-1}\b(u)\cdot\a^{-1}\b(v))\cdot[\a(x),\a^{2}\b^{-1}(y)]\\
 &&\hspace{-4mm}+(\a^{-1}\b(u)\cdot D\a^{-1}\b(v))\cdot[\a(x),\a^{2}\b^{-1}(y)]\\
 &\stackrel{(\ref{eq:4.1})(\ref{eq:4.2})(\ref{eq:4.4})}=&\hspace{-2mm}(D\b(x)\cdot[y,\b^{-1}(u)])\cdot \b(v)+\b(u)\cdot (D\a(x)\cdot[\a\b^{-1}(y),\b^{-1}(v)])\\
 &&\hspace{-6mm}+(D\b(y)\cdot[\a^{-1}(u),\a\b^{-1}(x)])\cdot \b(v)+\b(u)\cdot (D\a(y)\cdot[\a^{-1}(v),\a^{2}\b^{-2}(x)])\\
 &&\hspace{-6mm}+(D\a^{-1}\b(u)\cdot[x,\a\b^{-1}(y)])\cdot \b(v)+
 \b(u)\cdot (D\a^{-1}\b(v)\cdot[\a\b^{-1}(x),\a^{2}\b^{-2}(y)])\\
 &=&\hspace{-9mm}[\b(x),\b(y),u]\cdot\b(v)+\b(u)\cdot[\a(x),\a(y),v].
 \end{eqnarray*}}
 \vskip-5mm
 {\bf Step 3.} Now we prove Eq.(\ref{eq:7.7}). {\small
 \begin{eqnarray*}
 &&\hspace{-4mm}-[\b^{2}(x),\b^{2}(y),\a\b(z)]\cdot[\a\b(u),\a\b(v),\a^{2}(w)] +[\b^{2}(x),\b^{2}(y),\a\b(u)]\cdot[\a\b(z),\a\b(v),\a^{2}(w)]\\
 &&-[\b^{2}(x),\b^{2}(y),\a\b(v)]\cdot[\a\b(z),\a\b(u),\a^{2}(w)] +[\b^{2}(x),\b^{2}(y),\a\b(w)]\cdot[\a\b(z),\a\b(u),\a^{2}(v)]\\
 &&\hspace{-4mm}=\Big(\hspace{-1.5mm}-(D\b^{2}(x)\cdot[\b(y),\a(z)])
 \cdot(D\a\b(u)\cdot[\a(v),\a^{2}\b^{-1}(w)])-(D\b^{2}(x)\cdot[\b(y),\a(v)])
 \cdot(D\a\b(u)\\
 &&\cdot[\a(w),\a^{2}\b^{-1}(z)])
 +(D\b^{2}(x)\cdot[\b(y),\a(w)])\cdot(D\a\b(u)\cdot[\a(v),\a^{2}\b^{-1}(z)])\Big)
 +\Big(\hspace{-1.5mm}-(D\b^{2}(x)\\
 &&\cdot[\b(y),\a(z)])\cdot(D\a\b(v)\cdot[\a(w),\a^{2}\b^{-1}(u)])
 +(D\b^{2}(x)\cdot[\b(y),\a(u)])\cdot(D\a\b(v)\cdot[\a(w),\\
 &&\a^{2}\b^{-1}(z)])+(D\b^{2}(x)\cdot[\b(y),\a(w)])\cdot(D\a\b(v)
 \cdot[\a(z),\a^{2}\b^{-1}(u)])\Big)+\Big(\hspace{-1.5mm}-(D\b^{2}(x)\cdot[\b(y),\\
 &&\a(z)])\cdot(D\a\b(w)\cdot[\a(u),\a^{2}\b^{-1}(v)])
 +(D\b^{2}(x)\cdot[\b(y),\a(u)])\cdot(D\a\b(w)\cdot[\a(z),\a^{2}\b^{-1}(v)])\\
 &&-(D\b^{2}(x)\cdot[\b(y),\a(v)])\cdot(D\a\b(w)\cdot[\a(z),\a^{2}\b^{-1}(u)])\Big)
 +\Big(\hspace{-1.5mm}-(D\b^{2}(y)\cdot[\b(z),\a(x)])\cdot(D\a\b(u)\\
 &&\cdot[\a(v),\a^{2}\b^{-1}(w)])-(D\b^{2}(y)\cdot[\b(v),\a(x)])\cdot(D\a\b(u)
 \cdot[\a(w),\a^{2}\b^{-1}(z)])+(D\b^{2}(y)\cdot[\b(w),\\
 &&\a(x)])\cdot(D\a\b(u)\cdot[\a(v),\a^{2}\b^{-1}(z)])\Big)
 +\Big(\hspace{-1.5mm}-(D\b^{2}(y)\cdot[\b(z),\a(x)])\cdot(D\a\b(v)
 \cdot[\a(w),\a^{2}\b^{-1}(u)])\\
 &&+(D\b^{2}(y)\cdot[\b(u),\a(x)])\cdot(D\a\b(v)\cdot[\a(w),\a^{2}\b^{-1}(z)]
 +(D\b^{2}(y)\cdot[\b(w),\a(x)])\cdot(D\a\b(v)\\
 &&\cdot[\a(z),\a^{2}\b^{-1}(u)])\Big)
 +\Big(\hspace{-1.5mm}-(D\b^{2}(y)\cdot[\b(z),\a(x)])\cdot(D\a\b(w)
 \cdot[\a(u),\a^{2}\b^{-1}(v)])+(D\b^{2}(y)\cdot[\b(u),\\
 &&\a(x)])\cdot(D\a\b(w)\cdot[\a(z),\a^{2}\b^{-1}(v)])
 -(D\b^{2}(y)\cdot[\b(v),\a(x)])\cdot(D\a\b(w)\cdot[\a(z),\a^{2}\b^{-1}(u)])\Big)\\
 &&+\Big((D\b^{2}(x)\cdot[\b(y),\a(u)])\cdot(D\a\b(z)\cdot[\a(v),\a^{2}\b^{-1}(w)])
 -(D\b^{2}(x)\cdot[\b(y),\a(v)])\cdot(D\a\b(z)\\
 &&\cdot[\a(u),\a^{2}\b^{-1}(w)])
 +(D\b^{2}(x)\cdot[\b(y),\a(w)])\cdot(D\a\b(z)\cdot[\a(u),\a^{2}\b^{-1}(v)])\Big)+\Big((D\b^{2}(y)\cdot[\b(u),\\
 &&\a(x)])\cdot(D\a\b(z)\cdot[\a(v),\a^{2}\b^{-1}(w)])
 -(D\b^{2}(y)\cdot[\b(v),\a(x)])\cdot(D\a\b(z)\cdot[\a(u),\a^{2}\b^{-1}(w)])\\
 &&+(D\b^{2}(y)\cdot[\b(w),\a(x)])\cdot(D\a\b(z)\cdot[\a(u),\a^{2}\b^{-1}(v)])\Big)+\Big(\hspace{-1.5mm}-(D\b^{2}(z)\cdot[\b(x),\a(y)])\cdot(D\a\b(u)\\
 &&\cdot[\a(v),\a^{2}\b^{-1}(w)])
 +(D\b^{2}(u)\cdot[\b(x),\a(y)])\cdot(D\a\b(z)\cdot[\a(v),\a^{2}\b^{-1}(w)])\Big)+\Big(\hspace{-1.5mm}-(D\b^{2}(z)\cdot[\b(x),\\
 &&\a(y)])\cdot(D\a\b(v)\cdot[\a(w),\a^{2}\b^{-1}(u)])
 -(D\b^{2}(v)\cdot[\b(x),\a(y)])\cdot(D\a\b(z)\cdot[\a(u),\a^{2}\b^{-1}(w)])\Big)\\
 &&+\Big(\hspace{-1.5mm}-(D\b^{2}(z)\cdot[\b(x),\a(y)])\cdot(D\a\b(w)
 \cdot[\a(u),\a^{2}\b^{-1}(v)])+(D\b^{2}(w)\cdot[\b(x),\a(y)])\cdot(D\a\b(z)\\
 &&\cdot[\a(u),\a^{2}\b^{-1}(v)])\Big)
 +\Big((D\b^{2}(u)\cdot[\b(x),\a(y)])\cdot(D\a\b(v)\cdot[\a(w),\a^{2}\b^{-1}(z)])-(D\b^{2}(v)\cdot[\b(x),\\
 &&\a(y)])\cdot(D\a\b(u)\cdot[\a(w),\a^{2}\b^{-1}(z)])\Big)
 +\Big((D\b^{2}(u)\cdot[\b(x),\a(y)])\cdot(D\a\b(w)\cdot[\a(z),\a^{2}\b^{-1}(v)])\\
 &&+(D\b^{2}(w)\cdot[\b(x),\a(y)])\cdot(D\a\b(u)\cdot[\a(v),\a^{2}\b^{-1}(z)])\Big)+\Big(\hspace{-1.5mm}-(D\b^{2}(v)\cdot[\b(x),\a(y)])\cdot(D\a\b(w)\\
 &&\cdot[\a(z),\a^{2}\b^{-1}(u)])
 +(D\b^{2}(w)\cdot[\b(x),\a(y)])\cdot(D\a\b(v)\cdot[\a(z),\a^{2}\b^{-1}(u)])\Big)\\
 &&\hspace{-4mm}\stackrel{\bigtriangleup}=\hbox{(I)}+\hbox{(II)}+\hbox{(III)}
 +\hbox{(IV)}+\hbox{(V)}+\hbox{(VI)}+\hbox{(VII)}+ \hbox{(VIII)}+\hbox{(IX)}+\hbox{(X)}+\hbox{(XI)}+\hbox{(XII)}+\hbox{(XIII)}+\hbox{(XIV)}.
 \end{eqnarray*} }
 Further,{\small
 \begin{eqnarray*}
 \hbox{(I)}
 &=&-(D\b^{2}(x)\cdot[\b(y),\a(z)])\cdot(D\a\b(u)\cdot[\a(v),\a^{2}\b^{-1}(w)])\\
 &&-(D\b^{2}(x)\cdot[\b(y),\a(v)])\cdot(D\a\b(u)\cdot[\a(w),\a^{2}\b^{-1}(z)])\\
 &&+(D\b^{2}(x)\cdot[\b(y),\a(w)])\cdot(D\a\b(u)\cdot[\a(v),\a^{2}\b^{-1}(z)]) \stackrel{(\ref{eq:4.1})(\ref{eq:4.2})(\ref{eq:4.4})(\ref{eq:5.6})}= 0.
 \end{eqnarray*}}
 Similarly, \hbox{(II)}=\hbox{(III)}=\hbox{(IV)}=\hbox{(V)}=\hbox{(VI)}=\hbox{(VII)}=\hbox{(VIII)}=0. On the other hand, {\small
 \begin{eqnarray*}
 \hbox{(IX)}&=&-(D\b^{2}(z)\cdot[\b(x),\a(y)])\cdot(D\a\b(u)\cdot[\a(v),\a^{2}\b^{-1}(w)])\\
 &&+(D\b^{2}(u)\cdot[\b(x),\a(y)])\cdot(D\a\b(z)\cdot[\a(v),\a^{2}\b^{-1}(w)]) \stackrel{(\ref{eq:4.1})(\ref{eq:4.2})}= 0.
 \end{eqnarray*}}
 Likewise, \hbox{(X)}=\hbox{(XI)}=\hbox{(XII)}=\hbox{(XIII)}=\hbox{(XIV)}=0. The proof is finished by $(L, \cdot, \a, \b)$ is a BiHom-commutative algebra.
 \end{proof}

 \begin{pro}\label{pro:7.14} Let $(L, \cdot, [,], \a, \b)$ be a regular strong BP algebra. Define a ternary operation on $L$ by{\small
 \begin{eqnarray}
 &\hspace{-10mm}[x,y,z]=x\cdot[\b^{-1}(y),\b^{-1}(z)]+y\cdot[\a^{-1}(z),\a\b^{-2}(x)]
 +\a^{-1}\b(z)\cdot[\b^{-1}(x),\a\b^{-2}(y)], x, y, z\in L.&\label{eq:7.14}
 \end{eqnarray}}
 Then $(L, [\ ,\ ,\ ], \a, \b)$ is a 3-BiHom-Lie algebra satisfying Eq.(\ref{eq:7.7}).
 \end{pro}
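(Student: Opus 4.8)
The starting observation is that the ternary bracket \eqref{eq:7.14} is exactly the bracket \eqref{eq:7.8} of Theorem \ref{thm:7.7} with the derivation $D$ replaced by $\id_{L}$, equivalently the bracket \eqref{eq:7.12} of Theorem \ref{thm:7.12} with $f$ replaced by $\id_{L}$. Now $\id_{L}$ is neither a derivation of $(L,\cdot,[,])$ nor a bracket-antihomomorphism (indeed $\id_{L}[x,y]=[x,y]\neq-[x,y]$ in general), so neither theorem can be quoted directly; instead I would run the same computational template with the coefficient map set equal to the identity. The one structural change is that wherever the proofs of Theorems \ref{thm:7.7} and \ref{thm:7.12} expand $D$ (resp.\ $f$) of a nested ternary bracket through the Leibniz rule (resp.\ the antihomomorphism rule \eqref{eq:7.11}), here the corresponding step is merely $\id_{L}$ applied to that bracket, and one instead pushes the surrounding $\a^{\pm1}\b^{\pm1}$ inside the inner bracket by the multiplicativity of $\a,\b$. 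This deletes the $D^{2}$- and Leibniz-split terms altogether, so the present computation is in fact shorter than that of Theorem \ref{thm:7.7}.

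The plan is as follows. First I would check the multiplicativity \eqref{eq:7.2} and the BiHom-skew-symmetry \eqref{eq:7.3} of the ternary operation; these are immediate from the multiplicativity of $\a,\b$ with respect to $\cdot$ and $[,]$ and from the BiHom-skew-symmetry \eqref{eq:4.4} of $[,]$, just as in the ``direct'' parts of Theorems \ref{thm:7.7} and \ref{thm:7.12}. Next, for the 3-BiHom-Jacobi identity \eqref{eq:7.4}, I would expand all four nested ternary brackets using \eqref{eq:7.14}, distribute the outer $\a^{\pm1},\b^{\pm1}$ over the inner brackets by the multiplicativity of $\a,\b$, and regroup the resulting terms into blocks as in Step 1 of Theorem \ref{thm:7.7}. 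Each block should then vanish by one of the strong BP identities: the purely cyclic product-bracket blocks collapse by \eqref{eq:5.3}, the mixed blocks by \eqref{eq:5.4}, and the remaining blocks of the form $(\text{product})\cdot[\text{bracket},\text{bracket}]$ by \eqref{eq:5.6}, with the auxiliary identity \eqref{eq:7.17} disposing of the cross terms; regularity of $\a,\b$ is used throughout to move $\a^{\pm1},\b^{\pm1}$ freely. Finally, for the strong condition \eqref{eq:7.7} I would repeat Step 3 of Theorem \ref{thm:7.7} with $D=\id_{L}$, again grouping into blocks and annihilating each one by \eqref{eq:5.6} together with \eqref{eq:4.1}, \eqref{eq:4.2} and \eqref{eq:4.4}.

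The main obstacle is the 3-BiHom-Jacobi identity \eqref{eq:7.4}: the bookkeeping of the many product-bracket terms, and the matching of each surviving block to the correct strong identity, is the crux of the argument. The decisive point is that the hypothesis ``strong''---namely that \eqref{eq:5.6} holds, see Definition \ref{de:7.3}---is exactly what is needed: the blocks of the form $(\text{product})\cdot[\text{bracket},\text{bracket}]$ do not cancel for a general, non-strong BP algebra, and it is precisely \eqref{eq:5.6} that forces them to vanish. Once this is settled, the strong compatibility \eqref{eq:7.7} for the ternary bracket follows by the same regrouping, completing the proof.
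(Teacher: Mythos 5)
Your overall skeleton---verify \eqref{eq:7.2} and \eqref{eq:7.3} directly, then establish \eqref{eq:7.4} and \eqref{eq:7.7} by expanding via \eqref{eq:7.14} and cancelling blocks, noting that neither Theorem \ref{thm:7.7} nor Theorem \ref{thm:7.12} can be quoted verbatim---matches the paper, whose entire proof is ``Similar to Theorem \ref{thm:7.7}''. But the toolkit you assign to the block cancellation is the wrong one, and this is a genuine gap, not a stylistic difference. You propose to kill the blocks with \eqref{eq:5.3}, \eqref{eq:5.4} and \eqref{eq:7.17}, calling them ``strong BP identities''. They are not: in the paper these are \emph{transposed} BiHom-Poisson identities (Theorem \ref{thm:5.4} and Lemma \ref{lem:5.4-1} assume a TBP algebra, and their proofs rest on \eqref{eq:5.1}), whereas the hypothesis of Proposition \ref{pro:7.14} is a strong \emph{BP} algebra, for which only the BiHom-Poisson identities \eqref{eq:4.1}, \eqref{eq:4.2}, \eqref{eq:4.4}, \eqref{eq:4.6}, \eqref{eq:4.7} together with \eqref{eq:5.6} are available; the two classes are essentially disjoint (Remark \ref{rmk:5.7a}(2)). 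Moreover \eqref{eq:5.3} and \eqref{eq:7.17} genuinely fail for strong BP algebras: take $L=K[s,t]$ with the symplectic Poisson bracket $[f,g]=f_sg_t-f_tg_s$ and $\a=\b=\id$. This is a regular strong BP algebra, since \eqref{eq:5.6} reduces to the three-term Pl\"ucker relation among the $2\times 2$ determinants $[f,g]=\det(\nabla f,\nabla g)$; yet $\circlearrowleft_{x,y,z}x\cdot[y,z]$ evaluated at $x=s$, $y=t$, $z=st$ equals $s(-t)+t(-s)+st\cdot 1=-st\neq 0$, so \eqref{eq:5.3} fails, and a similar evaluation kills \eqref{eq:7.17}. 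Hence the blocks you route through \eqref{eq:5.3} and \eqref{eq:7.17} cannot be cancelled as you describe: your plan is in effect a transplant of the proofs of Theorems \ref{thm:7.8} and \ref{thm:7.12} (the TBP results), and under the present hypotheses it breaks down.

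The route the paper intends, and which does work, is to rerun Steps 1 and 3 of Theorem \ref{thm:7.7} with the coefficient map equal to the identity: since no Leibniz splitting occurs, each inner ternary bracket is carried whole and $\a^{\pm1},\b^{\pm1}$ are pushed inside by multiplicativity (your observation that the computation shortens is correct), and every resulting block is annihilated using only \eqref{eq:4.1}, \eqref{eq:4.2}, \eqref{eq:4.4}, the BiHom-Leibniz rule \eqref{eq:4.6}, the BiHom-Jacobi condition \eqref{eq:4.7} and strongness \eqref{eq:5.6}. Note that the proof of Theorem \ref{thm:7.7} never invokes \eqref{eq:5.1}, \eqref{eq:5.3}, \eqref{eq:5.4} or \eqref{eq:7.17}---that is exactly what makes it valid in the BP rather than TBP setting, and it is the feature your proposal loses. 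Your treatment of \eqref{eq:7.7} itself (Step 3 with $D=\id$, using \eqref{eq:4.1}, \eqref{eq:4.2}, \eqref{eq:4.4}, \eqref{eq:5.6}) is fine as stated. If you do want \eqref{eq:5.4} in the Jacobi part, it is in fact valid for regular strong BP algebras, but only via the nontrivial equivalence of \eqref{eq:5.4} and \eqref{eq:5.6} for BP algebras established by the computation in Remark \ref{rmk:7.16}(2); that argument would have to be reproduced, not cited from Theorem \ref{thm:5.4}.
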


 \begin{proof}
 Similar to Theorem \ref{thm:7.7}.
 \end{proof}

 \begin{rmk}\label{rmk:7.16} (1) Under the assumption of Proposition \ref{pro:7.14}, $(L, [\ ,\ ,\ ], \a, \b)$ is not a BP 3-Lie algebra since Eq.(\ref{eq:7.6}) does not hold.

 (2) Let $(L, \cdot, [,], \a, \b)$ be a BP algebra. If the strongness condition Eq.(\ref{eq:5.6}) is replaced by Eq.(\ref{eq:5.4}), then Proposition \ref{pro:7.14} still holds. In fact, by Eq.(\ref{eq:5.4}),{\small
 \begin{eqnarray*}
 &&\hspace{-25mm}[\a\b^{2}(h)\cdot[\a\b^{2}(x),\a^{2}\b(y)],\a^{3}\b^{2}(z)] +[\a\b^{2}(h)\cdot[\a\b^{2}(y),\a^{2}\b(z)],\a^{3}\b^{2}(x)]\\
 &&+[\a\b^{2}(h)\cdot[\a\b^{2}(z),\a^{2}\b(x)],\a^{3}\b^{2}(y)]\\
 &\stackrel{(\ref{eq:4.4})(\ref{eq:4.6})}=&\hspace{-6mm}
 -[\a\b^{3}(z),\a^{2}\b(h)]\cdot[\a^{2}\b^{2}(x),\a^{3}\b(y)] -\a^{2}\b^{2}(h)\cdot[\a^{2}\b^{2}(z),[\a^{2}\b(x),\a^{3}(y)]]\\
 &&\hspace{-6mm}-[\a\b^{3}(x),\a^{2}\b(h)]\cdot[\a^{2}\b^{2}(y),\a^{3}\b(z)] -\a^{2}\b^{2}(h)\cdot[\a^{2}\b^{2}(x),[\a^{2}\b(y),\a^{3}(z)]]\\
 &&\hspace{-6mm}-[\a\b^{3}(y),\a^{2}\b(h)]\cdot[\a^{2}\b^{2}(z),\a^{3}\b(x)] -\a^{2}\b^{2}(h)\cdot[\a^{2}\b^{2}(y),[\a^{2}\b(z),\a^{3}(x)]]\\
 &\stackrel{(\ref{eq:4.2})(\ref{eq:4.4})(\ref{eq:4.7})}=&\hspace{-3mm}
 [\a\b^{3}(x),\a^{2}\b^{2}(y)]\cdot[\a^{2}\b(h),\a^{3}\b(z)] +[\a\b^{3}(y),\a^{2}\b^{2}(z)]\cdot[\a^{2}\b(h),\a^{3}\b(x)]\\
 &&+[\a\b^{3}(z),\a^{2}\b^{2}(x)]\cdot[\a^{2}\b(h),\a^{3}\b(y)],
 \end{eqnarray*}}
 as desired.
 \end{rmk}

\subsection{TBP 3-Lie algebras from TBP algebras and strong BP algebras}

 \begin{defi}\label{de:7.9} A {\bf TBP 3-Lie algebra} is a 5-tuple $(L, \cdot, [\ ,\ ,\ ], \a, \b)$, where $(L, \cdot, \a, \b)$ is a BiHom-commutative algebra and $(L, [\ ,\ ,\ ], \a, \b)$ is a
 3-BiHom-Lie algebra satisfying the following condition:
 \begin{eqnarray}
 &3\a\b(u)\cdot[x,y,z]=[\b(u)\cdot x,\b(y),\b(z)]+[\b(x),\b(u)\cdot y,\b(z)]
 +[\b(x),\b(y),\a(u)\cdot z].&\label{eq:7.10}
 \end{eqnarray}
 \end{defi}

 \begin{pro}\label{pro:7.10} Let $(L, \cdot, [\ ,\ ,\ ])$ be a transposed Possion 3-Lie algebra and $\a,\b:L\lr L$ two commuting algebra maps such that $\a(x\cdot y)=\a(x)\cdot \a(y), \b(x\cdot y)=\b(x)\cdot \b(y), \a[x, y, z]=[\a(x), \a(y), \a(z)], \b[x, y, z]=[\b(x), \b(y), \b(z)]$. Then $(L, \cdot'=\cdot\circ(\a\o\b), [\ ,\ ,\ ]'=[\ ,\ ,\ ]\circ(\a\o\a\o\b), \a, \b)$ is a TBP 3-Lie algebra, called the ``Yau twist" of $(L, \cdot, [\ ,\ ,\ ])$.
 \end{pro}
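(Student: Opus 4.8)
The plan is to follow the strategy of Propositions \ref{pro:5.2} and \ref{pro:7.5}. By Definition \ref{de:7.9}, three facts must be checked: that $(L,\cdot',\a,\b)$ is BiHom-commutative, that $(L,[\ ,\ ,\ ]',\a,\b)$ is a 3-BiHom-Lie algebra, and that the compatibility Eq.(\ref{eq:7.10}) holds for the twisted operations. The first follows at once from Remark \ref{rmk:de:4.1}(2), since $(L,\cdot)$ is commutative associative and $\a,\b$ are commuting algebra maps. The second follows from \cite[Theorem 1.12]{KMS}, because $(L,[\ ,\ ,\ ])$ is a 3-Lie algebra and $\a,\b$ are commuting 3-Lie morphisms with $\a\b=\b\a$. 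Hence the only genuine task is to verify Eq.(\ref{eq:7.10}).

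First I would unwind the two twisted products, namely $x\cdot' y=\a(x)\cdot\b(y)$ and $[x,y,z]'=[\a(x),\a(y),\b(z)]$. Using that $\b$ is a 3-Lie morphism together with $\a\b=\b\a$, the left-hand side of Eq.(\ref{eq:7.10}) becomes
\[
3\a\b(u)\cdot'[x,y,z]'=3\a^{2}\b(u)\cdot[\a\b(x),\a\b(y),\b^{2}(z)].
\]
Next I would compute the three terms on the right-hand side in the same fashion, applying $\a$ to the first two slots and $\b$ to the third (as dictated by $[\ ,\ ,\ ]'$) and using that $\a,\b$ are algebra maps for $\cdot$. This gives
\[
[\b(u)\cdot' x,\b(y),\b(z)]'=[\a^{2}\b(u)\cdot\a\b(x),\a\b(y),\b^{2}(z)],
\]
\[
[\b(x),\b(u)\cdot' y,\b(z)]'=[\a\b(x),\a^{2}\b(u)\cdot\a\b(y),\b^{2}(z)],
\]
\[
[\b(x),\b(y),\a(u)\cdot' z]'=[\a\b(x),\a\b(y),\a^{2}\b(u)\cdot\b^{2}(z)].
\]

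Finally I would invoke the defining identity of the transposed Poisson 3-Lie algebra $(L,\cdot,[\ ,\ ,\ ])$, i.e. Eq.(\ref{eq:7.10}) in the untwisted case $\a=\b=\id$, namely $3w\cdot[a,b,c]=[w\cdot a,b,c]+[a,w\cdot b,c]+[a,b,w\cdot c]$, with the substitution $w=\a^{2}\b(u)$, $a=\a\b(x)$, $b=\a\b(y)$, $c=\b^{2}(z)$. This identifies the displayed left-hand side with the sum of the three displayed right-hand terms, completing the verification. No BiHom-Jacobi-type manipulation is required. The only point demanding care --- and the main \emph{obstacle}, such as it is --- is purely bookkeeping: one must confirm that after twisting the acting element is uniformly $\a^{2}\b(u)$ and the three arguments are uniformly $\a\b(x),\a\b(y),\b^{2}(z)$ across all four expressions, so that a single application of the original identity closes the computation.
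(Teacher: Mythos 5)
Your proposal is correct and follows essentially the same route as the paper's proof: reduce the BiHom-commutative and 3-BiHom-Lie parts to Remark \ref{rmk:de:4.1} and \cite[Theorem 1.12]{KMS}, then verify Eq.(\ref{eq:7.10}) by unwinding the twisted operations so that the acting element is $\a^{2}\b(u)$ and the arguments are $\a\b(x),\a\b(y),\b^{2}(z)$, and apply the untwisted transposed Poisson 3-Lie identity once. The paper performs exactly this computation (reading the chain from the right-hand side to $3\a^{2}\b(u)\cdot[\a\b(x),\a\b(y),\b^{2}(z)]=3\a\b(u)\cdot'[x,y,z]'$), so there is nothing to add.
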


 \begin{proof} We only verify that Eq.(\ref{eq:7.10}) holds for $(L, \cdot', [\ ,\ ,\ ]', \a, \b)$ and others can be obtained by \cite[Claim 3.7]{GMMP} and \cite[Theorem 1.12]{KMS}.
 \begin{eqnarray*}
 &&\hspace{-10mm}[\b(u)\cdot' x, \b(y), \b(z)]'+[\b(x), \b(u)\cdot' y, \b(z)]'
 +[\b(x), \b(y), \a(u)\cdot' z]'\\
 &=&[\a^{2}\b(u)\cdot \a\b(x), \a\b(y), \b^{2}(z)]+[\a\b(x),\a^{2}\b(u)\cdot \a\b(y), \b^{2}(z)] +[\a\b(x), \a\b(y), \a^{2}\b(u)\cdot \b^{2}(z)]\\
 &=&3\a^{2}\b(u)\cdot[\a\b(x), \a\b(y), \b^{2}(z)]\\
 &=&3\a\b(u)\cdot'[x,y,z]',
 \end{eqnarray*}
 as we needed.
 \end{proof}

 \begin{pro}\label{pro:7.17} Let $(L, \cdot, \a, \b)$ be a BiHom-commutative algebra and $(L, [\ ,\ ,\ ], \a, \b)$ be a 3-BiHom-Lie algebra. Assume that $(L, \cdot, [\ ,\ ,\ ], \a, \b)$ is both a BP 3-Lie algebra and a TBP 3-Lie algebra. Then
 \begin{eqnarray}
 &\a\b^{2}(u)\cdot[\a\b(x),\a\b(y),\a^{2}(z)]
 =[\a\b^{2}(x),\a\b^{2}(y),\a\b(u)\cdot\a^{2}(z)]=0.&\label{eq:7.15}
 \end{eqnarray}
 \end{pro}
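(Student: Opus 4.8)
The plan is to mimic the proof of Proposition~\ref{pro:5.7}, replacing the binary bracket by the ternary one and the factor $2$ by $3$, but with one extra twist forced by the ternary setting. First I would record that the two equalities in Eq.(\ref{eq:7.15}) are not independent. Applying the BP $3$-Lie Leibniz rule Eq.(\ref{eq:7.6}) to $[\a\b^{2}(x),\a\b^{2}(y),\a\b(u)\cdot\a^{2}(z)]$ and rewriting the resulting ``bracket$\,\cdot\,$element'' summand by Eqs.(\ref{eq:7.2}) and (\ref{eq:4.2}) gives
\[
[\a\b^{2}(x),\a\b^{2}(y),\a\b(u)\cdot\a^{2}(z)]=\a\b^{2}(z)\cdot[\a\b(x),\a\b(y),\a^{2}(u)]+\a\b^{2}(u)\cdot[\a\b(x),\a\b(y),\a^{2}(z)],
\]
so the second equality is a sum of two instances of the first. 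Hence it suffices to prove $\a\b^{2}(u)\cdot[\a\b(x),\a\b(y),\a^{2}(z)]=0$.

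To reach this, I would apply the TBP $3$-Lie compatibility Eq.(\ref{eq:7.10}) with multiplier $\b(u)$ and arguments $\a\b(x),\a\b(y),\a^{2}(z)$, writing $3\a\b^{2}(u)\cdot[\a\b(x),\a\b(y),\a^{2}(z)]$ as the sum of the three ``product-in-one-slot'' brackets on its right-hand side. Each of these is expanded by first moving the distinguished product into the third slot through the BiHom-skew-symmetry Eq.(\ref{eq:7.3}) and then applying Eq.(\ref{eq:7.6}); after simplification by Eq.(\ref{eq:4.2}) each becomes a sum of two terms of the form $\a\b^{2}(a)\cdot[\a\b(b),\a\b(c),\a^{2}(d)]$. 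Tracking signs, exactly three of the six resulting terms recombine, via Eq.(\ref{eq:7.3}), into $3\a\b^{2}(u)\cdot[\a\b(x),\a\b(y),\a^{2}(z)]$, which cancels the left-hand side and leaves the cyclic identity
\[
\circlearrowleft_{x,y,z}\a\b^{2}(x)\cdot[\a\b(y),\a\b(z),\a^{2}(u)]=0,
\]
the ternary analogue of Eq.(\ref{eq:5.3}).

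Here lies the point where the argument departs from Proposition~\ref{pro:5.7}. In the binary case the residual terms form only two of the three summands of a cyclic sum, so a single use of Eq.(\ref{eq:5.3}) pins down the missing summand; in the present ternary case the residual is a \emph{complete} cyclic sum, and one instance of the identity above is tautological. The remedy is to use that this identity holds for all elements: substituting $u,x,y,z$ so that each of them in turn occupies the distinguished third slot $\a^{2}(\cdot)$ of every bracket produces four linear relations among the four products $\a\b^{2}(a)\cdot[\a\b(b),\a\b(c),\a^{2}(d)]$ obtained by letting each of $u,x,y,z$ be the multiplier $a$. This $4\times 4$ system is nondegenerate: adding the relations in pairs first yields $\a\b^{2}(u)\cdot[\cdots]=-\a\b^{2}(x)\cdot[\cdots]$ and a matching relation among the other two, and substituting back forces a scalar multiple of a single product to vanish, so that all four products are $0$; in particular $\a\b^{2}(u)\cdot[\a\b(x),\a\b(y),\a^{2}(z)]=0$. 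Feeding this into the first displayed identity yields the remaining equality in Eq.(\ref{eq:7.15}).

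The main obstacle I anticipate is twofold. The routine-but-delicate part is the sign and weight bookkeeping in the expansion step, where products are shuttled between the three slots using the ternary skew-symmetry Eq.(\ref{eq:7.3}); a single misplaced sign destroys the cancellation. The genuinely new idea, absent from the binary proof, is recognizing that the lone cyclic identity is insufficient and that one must instead assemble its four ``rotations'' into a full-rank linear system — this is what forces the single product, rather than merely a cyclic combination, to be zero.
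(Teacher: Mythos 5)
Your proof is correct, and its first two stages coincide with the paper's own, merely reordered: the opening reduction of $[\a\b^{2}(x),\a\b^{2}(y),\a\b(u)\cdot\a^{2}(z)]$ to a sum of two products of the first type via Eqs.(\ref{eq:7.6}), (\ref{eq:7.2}) and (\ref{eq:4.2}) is exactly the paper's closing step, and your expansion of $3\a\b^{2}(u)\cdot[\a\b(x),\a\b(y),\a^{2}(z)]$ through Eqs.(\ref{eq:7.10}), (\ref{eq:7.3}), (\ref{eq:7.6}) and (\ref{eq:4.2}) produces precisely the paper's intermediate relation Eq.(\ref{eq:a-1-1}), which is your cyclic identity $\circlearrowleft_{x,y,z}\a\b^{2}(x)\cdot[\a\b(y),\a\b(z),\a^{2}(u)]=0$. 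The genuine divergence is the passage from this identity to the vanishing of the individual products. The paper runs the compatibility machinery forward a second time: it triples Eq.(\ref{eq:a-1-1}), expands each of its three summands by Eq.(\ref{eq:7.10}) into three brackets, cancels six of the resulting nine by Eqs.(\ref{eq:4.2}) and (\ref{eq:7.3}), and identifies the surviving three with the right-hand side of Eq.(\ref{eq:7.10}), concluding $0=3\a\b^{2}(u)\cdot[\a\b(x),\a\b(y),\a^{2}(z)]$. You never invoke Eq.(\ref{eq:7.10}) again: the total skew-symmetry supplied by Eq.(\ref{eq:7.3}) lets you identify every product $\a\b^{2}(a)\cdot[\a\b(b),\a\b(c),\a^{2}(d)]$, up to sign, with one of four quantities $Q_{x},Q_{y},Q_{z},Q_{u}$ indexed by the multiplier $a$, and the four rotations of the cyclic identity become $Q_{x}-Q_{y}+Q_{z}=0$, $Q_{y}-Q_{z}+Q_{u}=0$, $Q_{x}+Q_{z}-Q_{u}=0$ and $-Q_{x}+Q_{y}+Q_{u}=0$. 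I checked the sign bookkeeping: the coefficient matrix indeed has determinant $\pm 3$, and your elimination ($Q_{u}=-Q_{x}$ and $Q_{y}=-Q_{z}$ from the two pairings, then $Q_{x}=2Q_{y}$ and $Q_{y}=2Q_{x}$, hence $3Q_{x}=0$) goes through, so all four products vanish in characteristic $0$. Your finish is shorter, needs no further appeal to the BP or TBP compatibility conditions, and makes explicit both where the assumption on the ground field enters (invertibility of $3$) and the structural contrast with Proposition \ref{pro:5.7}, where the residual cyclic sum is incomplete and a single substitution suffices; the paper's finish avoids signed linear algebra altogether and stays in the identity-rewriting style used throughout, at the cost of four further applications of Eq.(\ref{eq:7.10}).
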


 \begin{proof} Let $u, x, y, z\in L$. By Eq.(\ref{eq:7.10}), we have {\small
 \begin{eqnarray*}
 3\a\b^{2}(u)\cdot[\a\b(x),\a\b(y),\a^{2}(z)]
 &=&[\b^{2}(u)\cdot\a\b(x),\a\b^{2}(y),\a^{2}\b(z)]
 +[\a\b^{2}(x),\b^{2}(u)\cdot\a\b(y),\a^{2}\b(z)]\\
 &&+[\a\b^{2}(x),\a\b^{2}(y),\a\b(u)\cdot\a^{2}\b(z)]\\
 &\stackrel{(\ref{eq:7.3})(\ref{eq:7.6})}=&-[\b^{2}(z),\b^{2}(y),\a\b(u)]\cdot\a^{2}\b(x)-
 \a\b^{2}(u)\cdot[\a\b(z),\a\b(y),\a^{2}(x)]\\
 &&-[\b^{2}(x),\b^{2}(z),\a\b(u)]\cdot\a^{2}\b(y)-
 \a\b^{2}(u)\cdot[\a\b(x),\a\b(z),\a^{2}(y)]\\
 &&+[\b^{2}(x),\b^{2}(y),\a\b(u)]\cdot\a^{2}\b(z)+
 \a\b^{2}(u)\cdot[\a\b(x),\a\b(y),\a^{2}(z)]\\
 &\stackrel{(\ref{eq:4.2})(\ref{eq:7.3})}=&3\a\b^{2}(u)\cdot[\a\b(x),\a\b(y),\a^{2}(z)]
 -[\b^{2}(z),\b^{2}(y),\a\b(u)]\cdot\a^{2}\b(x)\\
 &&-[\b^{2}(x),\b^{2}(z),\a\b(u)]\cdot\a^{2}\b(y)
 +[\b^{2}(x),\b^{2}(y),\a\b(u)]\cdot\a^{2}\b(z).
 \end{eqnarray*}}
 So {\small
 \begin{eqnarray}
 -[\b^{2}(z),\b^{2}(y),\a\b(u)]\cdot\a^{2}\b(x)
 -[\b^{2}(x),\b^{2}(z),\a\b(u)]\cdot\a^{2}\b(y) +[\b^{2}(x),\b^{2}(y),\a\b(u)]\cdot\a^{2}\b(z)=0.\label{eq:a-1-1}
 \end{eqnarray}}
 Then {\small
 \begin{eqnarray*}
 0\hspace{-4mm}&\stackrel{(\ref{eq:a-1-1})}=&\hspace{-4mm}
 -3[\b^{2}(z),\b^{2}(y),\a\b(u)]\cdot\a^{2}\b(x)
 -3[\b^{2}(x),\b^{2}(z),\a\b(u)]\cdot\a^{2}\b(y) +3[\b^{2}(x),\b^{2}(y),\a\b(u)]\cdot\a^{2}\b(z)\\
 &\stackrel{(\ref{eq:4.2})}=&\hspace{-4mm}-3\a\b^{2}(x)\cdot[\a\b(z),\a\b(y),\a^{2}(u)]
 -3\a\b^{2}(y)\cdot[\a\b(x),\a\b(z),\a^{2}(u)] +3\a\b^{2}(z)\cdot[\a\b(x),\a\b(y),\a^{2}(u)]\\
 &\stackrel{(\ref{eq:7.10})}=&\hspace{-4mm}-[\b^{2}(x)\cdot\a\b(z),\a\b^{2}(y),\a^{2}\b(u)]
 -[\a\b^{2}(z),\b^{2}(x)\cdot\a\b(y),\a^{2}\b(u)] -[\a\b^{2}(z),\a\b^{2}(y),\a\b(x)\cdot\a^{2}\b(u)]\\
 &&\hspace{-4mm}-[\b^{2}(y)\cdot\a\b(x),\a\b^{2}(z),\a^{2}\b(u)]
 -[\a\b^{2}(x),\b^{2}(y)\cdot\a\b(z),\a^{2}\b(u)] -[\a\b^{2}(x),\a\b^{2}(z),\a\b(y)\cdot\a^{2}\b(u)]\\
 &&\hspace{-4mm}+[\b^{2}(z)\cdot\a\b(x),\a\b^{2}(y),\a^{2}\b(u)]
 +[\a\b^{2}(x),\b^{2}(z)\cdot\a\b(y),\a^{2}\b(u)] +[\a\b^{2}(x),\a\b^{2}(y),\a\b(z)\cdot\a^{2}\b(u)]\\
 &\hspace{-4mm}\stackrel{(\ref{eq:4.2})(\ref{eq:7.3})}=&\hspace{-8mm}
 -[\a\b^{2}(z),\a\b^{2}(y),\a\b(x)\cdot\a^{2}\b(u)]
 -[\a\b^{2}(x),\a\b^{2}(z),\a\b(y)\cdot\a^{2}\b(u)] +[\a\b^{2}(x),\a\b^{2}(y),\a\b(z)\cdot\a^{2}\b(u)]\\
 &\stackrel{(\ref{eq:4.2})(\ref{eq:7.3})}=&\hspace{-3mm}
 [\b^{2}(u)\cdot\a\b(x),\a\b^{2}(y),\a^{2}\b(z)]
 +[\a\b^{2}(x),\b^{2}(u)\cdot\a\b(y),\a^{2}\b(z)] +[\a\b^{2}(x),\a\b^{2}(y),\a\b(u)\cdot\a^{2}\b(z)]\\
 &\stackrel{(\ref{eq:7.10})}=&\hspace{-4mm}3\a\b^{2}(u)\cdot[\a\b(x),\a\b(y),\a^{2}(z)].
 \end{eqnarray*}}
 Therefore we have $\a\b^{2}(u)\cdot[\a\b(x),\a\b(y),\a^{2}(z)]=0$, and then $[\b^{2}(x),\b^{2}(y),\a\b(u)]\cdot\a^{2}\b(z)=0$.

 Based on the above two equations,
 \begin{eqnarray*}
 0&=&\a\b^{2}(u)\cdot[\a\b(x),\a\b(y),\a^{2}(z)]+
 [\b^{2}(x),\b^{2}(y),\a\b(u)]\cdot\a^{2}\b(z)\\
 &\stackrel{(\ref{eq:7.6})}=&[\a\b^{2}(x),\a\b^{2}(y),\a\b(u)\cdot\a^{2}(z)].
 \end{eqnarray*}

 Hence the proof is finished.
 \end{proof}

 \begin{rmk}\label{rmk:7.17a} Assume that the structure maps $\a, \b$ are bijective. Then Eq.(\ref{eq:7.15}) is equivalent to
 \begin{eqnarray}
 &u\cdot[x,y,z]=[u\cdot x,y,z]=0.&\label{eq:7.15a}
 \end{eqnarray}
 In this case for Proposition \ref{pro:7.17}, the condition Eq.(\ref{eq:7.15a}) is also sufficient.
 \end{rmk}

 \begin{rmk}\label{rmk:7.13} Under the assumption of Theorem \ref{thm:7.12}, $(L, \cdot, [\ ,\ ,\ ], \a, \b)$ is a TBP 3-Lie algebra if and only if $f$ satisfies
 \begin{eqnarray}
 &&\hspace{-10mm}(f\a\b(u)+\a\b(u))\cdot\Big(f(x)\cdot[\b^{-1}(y),\b^{-1}(z)]
 +f(y)\cdot[\a^{-1}(z),\a\b^{-2}(x)] \nonumber\\ &&\hspace{70mm}+f\a^{-1}\b(z)\cdot[\b^{-1}(x),\a\b^{-2}(y)]\Big)=0.\label{eq:7.13}
 \end{eqnarray}
 In fact, for all $x, y, z, u\in L$, we have{\small
 \begin{eqnarray*}
 &&\hspace{-10mm}[\b(u)\cdot x,\b(y),\b(z)]+[\b(x),\b(u)\cdot y,\b(z)]+[\b(x),\b(y),\a(u)\cdot z]\\
 &&\hspace{-6mm}=f(\b(u)\cdot x)\cdot[y,z]+f\b(y)\cdot[\a^{-1}\b(z),\a\b^{-1}(u)\cdot\a\b^{-2}(x)] +f\a^{-1}\b^{2}(z)\cdot[u\cdot\b^{-1}(x),\a\b^{-1}(y)]\\
 &&\hspace{-3mm}+f\b(x)\cdot[u\cdot\b^{-1}(y),z]+f(\b(u)\cdot y)\cdot[\a^{-1}\b(z),\a\b^{-1}(x)] +f\a^{-1}\b^{2}(z)\cdot[x,\a\b^{-1}(u)\cdot\a\b^{-2}(y)]\\
 &&\hspace{-3mm}+f\b(x)\cdot[y,\a\b^{-1}(u)\cdot\b^{-1}(z)]
 +f\b(y)\cdot[u\cdot\a^{-1}(z),\a\b^{-1}(x)] +f(\b(u)\cdot\a^{-1}\b(z))\cdot[x,\a\b^{-1}(y)]\\
 &&\hspace{-13mm}\stackrel{(\ref{eq:4.1})(\ref{eq:4.2})(\ref{eq:5.1})}=
 -f\a\b(u)\cdot\Big(f(x)\cdot[\b^{-1}(y),\b^{-1}(z)]
 +f(y)\cdot[\a^{-1}(z),\a\b^{-2}(x)]
 +f\a^{-1}\b(z)\cdot[\b^{-1}(x),\a\b^{-2}(y)]\Big)\\
 &&+2\a\b(u)\cdot\Big(f(x)\cdot[\b^{-1}(y),\b^{-1}(z)]+f(y)\cdot[\a^{-1}(z),\a\b^{-2}(x)] +f\a^{-1}\b(z)\cdot[\b^{-1}(x),\a\b^{-2}(y)]\Big)
 \end{eqnarray*}
 and
 \begin{eqnarray*}
 &&\hspace{-10mm}3\a\b(u)\cdot[x,y,z]\\
 &&=3\a\b(u)\cdot(f(x)\cdot[\b^{-1}(y),\b^{-1}(z)]
 +f(y)\cdot[\a^{-1}(z),\a\b^{-2}(x)] +f\a^{-1}\b(z)\cdot[\b^{-1}(x),\a\b^{-2}(y)]).
 \end{eqnarray*}}
 The rest is obvious.
 \end{rmk}

 \begin{pro}\label{pro:7.11} Let $(L, \cdot, [,], \a, \b)$ be a regular TBP algebra and $D$ a derivation of $(L, \cdot, \a, \b)$ and $(L, [,], \a, \b)$. Then the 5-tuple $(L, \cdot, [\ ,\ ,\ ], \a, \b)$ is a TBP 3-Lie algebra, where the 3-BiHom-Lie algebra is defined by Eq.(\ref{eq:7.8}).
 \end{pro}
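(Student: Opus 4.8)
The plan is to reduce everything to a single compatibility identity and then exploit the cyclic identities already proved in Theorem \ref{thm:5.4}. Since $D$ is a derivation of both $(L,\cdot,\a,\b)$ and $(L,[,],\a,\b)$, the ternary bracket defined by Eq.(\ref{eq:7.8}) makes $(L,[\ ,\ ,\ ],\a,\b)$ a 3-BiHom-Lie algebra by Theorem \ref{thm:7.8}, and $(L,\cdot,\a,\b)$ is BiHom-commutative because it is part of the TBP structure. Thus the only thing left to verify is the transposed 3-Lie compatibility Eq.(\ref{eq:7.10}), namely that $3\a\b(u)\cdot[x,y,z]$ equals the sum of the three ternary brackets on its right-hand side.

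First I would substitute the defining formula Eq.(\ref{eq:7.8}) into each of the three brackets $[\b(u)\cdot x,\b(y),\b(z)]$, $[\b(x),\b(u)\cdot y,\b(z)]$ and $[\b(x),\b(y),\a(u)\cdot z]$. In each bracket exactly one of the three resulting summands carries the derivative $D$ on the slot containing $u$; there I would apply the Leibniz rule $D(\b(u)\cdot w)=\b(Du)\cdot w+\b(u)\cdot D(w)$ (and its $D\a^{-1}\b$-twisted analogue for the third slot). This splits the whole right-hand side into two families: a $Du$-family, in which the derivative falls on $u$ while $x,y,z$ stay undifferentiated, and a complementary family in which $u$ occurs only as an undifferentiated factor.

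The clean step is the $Du$-family. Regrouping its three terms by BiHom-associativity Eq.(\ref{eq:4.1}) pulls out a common factor $\a\b(Du)$ and leaves the cyclic combination
\begin{eqnarray*}
x\cdot[\b^{-1}(y),\b^{-1}(z)]+y\cdot[\a^{-1}(z),\a\b^{-2}(x)]+\a^{-1}\b(z)\cdot[\b^{-1}(x),\a\b^{-2}(y)],
\end{eqnarray*}
which is precisely the expression appearing in Eq.(\ref{eq:7.14}). This combination equals the left-hand side of the cyclic identity Eq.(\ref{eq:5.3}) evaluated at $\a^{-1}\b^{-2}(x)$, $\a^{-1}\b^{-2}(y)$, $\a^{-2}\b^{-1}(z)$, hence vanishes in every regular TBP algebra by Theorem \ref{thm:5.4}; so the entire $Du$-family is zero. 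This is the TBP analogue of the obstruction term that in the involutive setting of Theorem \ref{thm:7.12} must be killed by hand via Eq.(\ref{eq:7.13}); here the derivation structure makes it collapse automatically.

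For the remaining family I would show it reconstructs $3\a\b(u)\cdot[x,y,z]$. The three undifferentiated diagonal parts $(\b(u)\cdot D(\cdot))\cdot[\ ,\ ]$ give, by Eq.(\ref{eq:4.1}), one copy $\a\b(u)\cdot[x,y,z]$; the six off-diagonal summands, in which $u$ sits inside a Lie bracket, are handled by extracting $u$ with the TBP compatibility Eq.(\ref{eq:5.1}) and then regrouping with Eqs.(\ref{eq:4.1}), (\ref{eq:4.2}) and the BiHom-skew-symmetry Eq.(\ref{eq:4.4}); these should contribute the remaining $2\a\b(u)\cdot[x,y,z]$, exactly as the off-diagonal terms do in Remark \ref{rmk:7.13}. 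Summing the two families yields Eq.(\ref{eq:7.10}). The main obstacle I anticipate is purely the bookkeeping of this last step: there are roughly a dozen twisted summands whose factors of $u$ must be pulled out by Eq.(\ref{eq:5.1}) and matched in the correct cyclic order, and I would organize them cyclically in $x,y,z$ (as in the display computations of Theorems \ref{thm:7.8} and \ref{thm:7.12}) so that the factor $2$ produced by Eq.(\ref{eq:5.1}) assembles cleanly into the required coefficient.
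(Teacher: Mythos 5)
Your proposal is correct and follows essentially the same route as the paper's proof: reduce to verifying Eq.(\ref{eq:7.10}) via Theorem \ref{thm:7.8}, expand the right-hand side by Eq.(\ref{eq:7.8}) and the Leibniz rule, kill the family of terms carrying $D$ on $u$ with the cyclic identity Eq.(\ref{eq:5.3}), and assemble the remaining diagonal and off-diagonal terms into $1+2=3$ copies of $\a\b(u)\cdot[x,y,z]$ using Eqs.(\ref{eq:4.1}), (\ref{eq:4.2}) and (\ref{eq:5.1}). The paper's displayed computation is exactly this decomposition, including the vanishing of the $D\a\b(u)$-factor term by Eq.(\ref{eq:5.3}).
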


 \begin{proof} To finish the proof, by Theorem \ref{thm:7.8}, we only need to verify that Eq.(\ref{eq:7.10}) holds. In fact, for all $x, y, z, u\in L$, we have{\small
 \begin{eqnarray*}
 &&\hspace{-10mm}[\b(u)\cdot x,\b(y),\b(z)]+[\b(x),\b(u)\cdot y,\b(z)]+[\b(x),\b(y),\a(u)\cdot z]\\
 &&\hspace{-3mm}=D(\b(u)\cdot x)\cdot[y,z]+D\b(y)\cdot[\a^{-1}\b(z),\a\b^{-1}(u)\cdot\a\b^{-2}(x)]
 +D\a^{-1}\b^{2}(z)\cdot[u\cdot\b^{-1}(x),\a\b^{-1}(y)]\\
 &&+D\b(x)\cdot[u\cdot\b^{-1}(y),z]+D(\b(u)\cdot y)\cdot[\a^{-1}\b(z),\a\b^{-1}(x)]
 +D\a^{-1}\b^{2}(z)\cdot[x,\a\b^{-1}(u)\cdot\a\b^{-2}(y)]\\
 &&+D\b(x)\cdot[y,\a\b^{-1}(u)\cdot\b^{-1}(z)]+D\b(y)\cdot[u\cdot\a^{-1}(z),\a\b^{-1}(x)]
 +D(\b(u)\cdot\a^{-1}\b(z))\cdot[x,\a\b^{-1}(y)]\\
 &&\hspace{-10mm}\stackrel{(\ref{eq:4.1})(\ref{eq:4.2})(\ref{eq:5.1})}=
 D\a\b(u)\cdot(x\cdot[\b^{-1}(y),\b^{-1}(z)]
 +y\cdot[\a^{-1}(z),\a\b^{-2}(x)] +\a^{-1}\b(z)\cdot[\b^{-1}(x),\a\b^{-2}(y)])\\
 &&+\a\b(u)\cdot(D(x)\cdot[\b^{-1}(y),\b^{-1}(z)]+D(y)\cdot[\a^{-1}(z),\a\b^{-2}(x)] +D\a^{-1}\b(z)\cdot[\b^{-1}(x),\a\b^{-2}(y)])\\
 &&+2\a\b(u)\cdot(D(x)\cdot[\b^{-1}(y),\b^{-1}(z)]+D(y)\cdot[\a^{-1}(z),\a\b^{-2}(x)] +D\a^{-1}\b(z)\cdot[\b^{-1}(x),\a\b^{-2}(y)])\\
 &&\hspace{-6mm}\stackrel{(\ref{eq:5.3})}=3\a\b(u)\cdot(D(x)\cdot[\b^{-1}(y),\b^{-1}(z)]
 +D(y)\cdot[\a^{-1}(z),\a\b^{-2}(x)] +D\a^{-1}\b(z)\cdot[\b^{-1}(x),\a\b^{-2}(y)])\\
 &&\hspace{-4mm}=3\a\b(u)\cdot[x,y,z],
 \end{eqnarray*}}
 finishing the proof.
 \end{proof}

 \begin{thm}\label{thm:7.15} Let $(L, \cdot, [,], \a, \b)$ be a regular BP algebra. If Eq.(\ref{eq:7.14}) defines a 3-BiHom-Lie algebra $(L, [\ ,\ ,\ ], \a, \b)$, then $(L, \cdot, [\ ,\ ,\ ], \a, \b)$ is a TBP 3-Lie algebra. In particular, if $(L,\cdot,[,],\a,\b)$ is a strong BP algebra, then the 3-BiHom-Lie algebra $(L, [\ ,\ ,\ ], \a, \b)$ given in Proposition \ref{pro:7.14} gives a TBP 3-Lie algebra $(L, \cdot, [\ ,\ ,\ ], \a, \b)$.
 \end{thm}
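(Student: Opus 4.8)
The plan is to observe that a TBP 3-Lie algebra in the sense of Definition \ref{de:7.9} demands exactly three things: that $(L,\cdot,\a,\b)$ be BiHom-commutative, that $(L,[\ ,\ ,\ ],\a,\b)$ be a 3-BiHom-Lie algebra, and that the transposed compatibility Eq.(\ref{eq:7.10}) hold. The first is part of the given BP structure and the second is precisely our standing hypothesis, so the whole proof reduces to verifying Eq.(\ref{eq:7.10}) for the ternary bracket of Eq.(\ref{eq:7.14}). Granting this, the ``in particular'' clause is immediate: for a regular strong BP algebra, Proposition \ref{pro:7.14} supplies the missing 3-BiHom-Lie structure, so the general statement applies.

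First I would expand the right-hand side of Eq.(\ref{eq:7.10}) by substituting Eq.(\ref{eq:7.14}) into each of the three triple brackets, producing nine terms of the same shape as those in the computation of Remark \ref{rmk:7.13} (with the map $f$ there replaced by the identity). In three of them $u$ enters through the commutative product, giving terms of the form $(\b(u)\cdot w)\cdot[\,\cdot,\cdot]$; each collapses directly, by the BiHom-associativity Eq.(\ref{eq:4.1}) together with the compatibility of $\b$ with the Lie bracket, to $\a\b(u)\cdot(w\cdot[\,\cdot,\cdot])$, and these three sum to exactly one copy $\a\b(u)\cdot[x,y,z]$.

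The heart of the argument is the remaining six terms, in each of which $u$ sits inside a Lie bracket. To each I would apply the BiHom-Leibniz rule Eq.(\ref{eq:4.6}), after first using BiHom-skew-symmetry Eq.(\ref{eq:4.4}) to move the $u$-containing product into the second bracket slot when needed. Each application splits the term into a \emph{clean} part, in which a factor $\a(u)$ is freed and, through the normalization $\b(w)\cdot(\a(u)\cdot V)=\a\b(u)\cdot(w\cdot V)$ (a consequence of Eqs.(\ref{eq:4.1}) and (\ref{eq:4.2})), produces a summand $\a\b(u)\cdot(\,\cdot\,)$, plus a \emph{residual} part in which $u$ stays trapped inside a two-variable bracket. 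The skew-symmetry identities $[\a^{-1}(z),\a\b^{-2}(y)]=-[\b^{-1}(y),\b^{-1}(z)]$ and their cyclic analogues then show that the three ``twisted'' clean parts coincide with the three ``direct'' ones, so the six clean parts collapse into two further copies of $\a\b(u)\cdot[x,y,z]$; adding the copy from the product-type terms yields the required $3\a\b(u)\cdot[x,y,z]$.

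The step I expect to demand the most care is proving that the residual terms sum to zero. They fall into three pairs, indexed by which two of $\{x,y,z\}$ accompany the trapped bracket in $u$, the two members of each pair carrying opposite signs. For each pair I would rewrite both residuals into a common monomial using Eq.(\ref{eq:4.1}) and the algebra-map property of $\b$, and then invoke BiHom-commutativity Eq.(\ref{eq:4.2}); a typical pair reduces to $\a^{-1}\b^{2}(z)\cdot(x\cdot\a(E'))-\a^{-1}\b^{2}(z)\cdot(x\cdot\a(E'))=0$, with $E'$ the normalized bracket. It is worth stressing that the verification of Eq.(\ref{eq:7.10}) uses only the BiHom-Poisson axioms --- Eqs.(\ref{eq:4.1}), (\ref{eq:4.2}), (\ref{eq:4.4}) and the BiHom-Leibniz rule Eq.(\ref{eq:4.6}) --- so that neither the strongness condition Eq.(\ref{eq:5.6}) nor the 3-BiHom-Jacobi identity is needed here; the 3-BiHom-Lie hypothesis serves only to make $(L,[\ ,\ ,\ ],\a,\b)$ the 3-Lie part of the resulting TBP 3-Lie structure.
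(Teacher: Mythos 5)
Your proposal is correct and follows essentially the same route as the paper's proof: both reduce the theorem to verifying Eq.(\ref{eq:7.10}), expand its right-hand side via Eq.(\ref{eq:7.14}) into nine terms, and collapse them to $3\a\b(u)\cdot[x,y,z]$ using only Eqs.(\ref{eq:4.1}), (\ref{eq:4.2}), (\ref{eq:4.4}) and the BiHom-Leibniz rule (\ref{eq:4.6}), with strongness entering only through Proposition \ref{pro:7.14} to supply the 3-BiHom-Lie hypothesis. The paper compresses the four-identity reduction into a single displayed step, so your clean/residual bookkeeping (and the pairwise cancellation of the residual terms) is simply a more explicit account of the same computation.
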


 \begin{proof} For all $u, x, y, z\in L$, we have{\small
 \begin{eqnarray*}
 &&\hspace{-10mm}[\b(u)\cdot x,\b(y),\b(z)]+[\b(x),\b(u)\cdot y,\b(z)]+[\b(x),\b(y),\a(u)\cdot z]\\
 &&\hspace{-3mm}=(\b(u)\cdot x)\cdot[y,z]+\b(y)\cdot[\a^{-1}\b(z),\a\b^{-1}(u)\cdot\a\b^{-2}(x)] +\a^{-1}\b^{2}(z)\cdot[u\cdot\b^{-1}(x),\a\b^{-1}(y)]\\
 &&+\b(x)\cdot[u\cdot\b^{-1}(y),z]+(\b(u)\cdot y)\cdot[\a^{-1}\b(z),\a\b^{-1}(x)] +\a^{-1}\b^{2}(z)\cdot[x,\a\b^{-1}(u)\cdot\a\b^{-2}(y)]\\
 &&+\b(x)\cdot[y,\a\b^{-1}(u)\cdot\b^{-1}(z)]+\b(y)\cdot[u\cdot\a^{-1}(z),\a\b^{-1}(x)] +(\b(u)\cdot\a^{-1}\b(z))\cdot[x,\a\b^{-1}(y)]\\
 &&\hspace{-12mm}\stackrel{(\ref{eq:4.1})(\ref{eq:4.2})(\ref{eq:4.4})(\ref{eq:4.6})}=3\a\b(u)\cdot(x\cdot[\b^{-1}(y),\b^{-1}(z)]+y\cdot[\a^{-1}(z),\a\b^{-2}(x)] +\a^{-1}\b(z)\cdot[\b^{-1}(x),\a\b^{-2}(y)])\\
 &&\hspace{-4mm}=3\a\b(u)\cdot[x,y,z].
 \end{eqnarray*}}
 Then Eq.(\ref{eq:7.10}) holds. And the proof is completed.
 \end{proof}

 For possible future research, we introduce the notion of TBP $n$-Lie algebra.
 \begin{defi}\label{de:7.18} Let $n\geq 2$ be an integer. A {\bf TBP $n$-Lie algebra} is a 5-tuple $(L, \cdot, \mu, \a, \b)$, where $(L, \cdot, \a, \b)$ is a BiHom-commutative algebra and $(L, \mu, \a, \b)$ is an $n$-BiHom-Lie algebra such that {\small
 \begin{eqnarray*}
 n\a\b(u)\cdot\mu(x_{1},x_{2},\cdot\cdot\cdot,x_{n})\hspace{-2mm}
 &=&\hspace{-2mm}\mu(\b(u)\cdot x_{1},\b(x_{2}),\cdot\cdot\cdot,\b(x_{n-1}),\b(x_{n}))+\mu(\b(x_{1}),\b(u)\cdot x_{2},\cdot\cdot\cdot,\b(x_{n-1}),\b(x_{n}))\\
 &&\hspace{-4mm}+\cdot\cdot\cdot\\
 &&\hspace{-4mm}+\mu(\b(x_{1}),\b(x_{2}),\cdot\cdot\cdot,\b(u)\cdot x_{n-1},\b(x_{n}))+\mu(\b(x_{1}),\b(x_{2}),\cdot\cdot\cdot,\b(x_{n-1}),\a(u)\cdot x_{n}),
 \end{eqnarray*}
 $\forall u, x_{1}, x_{2}, \cdot\cdot\cdot, x_{n} \in L$.}
 \end{defi}

 \begin{pro}\label{pro:7.19} Let $(L, \cdot, \mu)$ be a transposed Poisson $n$-Lie algebra \cite{BBGW}, $\a, \b: L\lr L$ be two commuting linear maps such that $\a(x\cdot y)=\a(x)\cdot \a(y), \b(x\cdot y)=\b(x)\cdot \b(y), \a\mu(x_{1},\cdot\cdot\cdot x_{n})=\mu(\a(x), \cdot\cdot\cdot, \a(x_{n})), \b\mu(x_{1},\cdot\cdot\cdot x_{n})=\mu(\b(x_{1}), \cdot\cdot\cdot, \b(x_{n}))$. Then $(L, \cdot'=\cdot\circ (\a\o\b), \mu'=\mu\circ (\a\o\a\cdot\cdot\cdot\o\a\o\b), \a, \b)$ is a TBP n-Lie algebra, called the ``Yau twist" of $(L, \cdot, \mu).$
 \end{pro}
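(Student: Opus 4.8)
The plan is to follow the pattern of Propositions~\ref{pro:5.2} and~\ref{pro:7.10}. The BiHom-commutative algebra axioms for $(L,\cdot',\a,\b)$ are immediate from \cite[Claim~3.7]{GMMP} (Remark~\ref{rmk:de:4.1}), and the $n$-BiHom-Lie algebra axioms for $(L,\mu',\a,\b)$ follow from the $n$-ary analogue of the Yau-twist construction of \cite[Theorem~1.12]{KMS}. Hence the only genuine task is to check the compatibility identity of Definition~\ref{de:7.18} for $(L,\cdot',\mu',\a,\b)$.

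First I would expand the right-hand side of that identity using $x\cdot' y=\a(x)\cdot\b(y)$ and $\mu'(x_{1},\ldots,x_{n})=\mu(\a(x_{1}),\ldots,\a(x_{n-1}),\b(x_{n}))$. For a slot $i<n$ the entry $\b(u)\cdot' x_{i}=\a\b(u)\cdot\b(x_{i})$ lies in one of the first $n-1$ positions of $\mu'$, so $\a$ is applied to it and, since $\a$ is an algebra map, it becomes $\a^{2}\b(u)\cdot\a\b(x_{i})$; for the last summand the entry $\a(u)\cdot' x_{n}=\a^{2}(u)\cdot\b(x_{n})$ receives $\b$ and becomes $\a^{2}\b(u)\cdot\b^{2}(x_{n})$. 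Writing $w=\a^{2}\b(u)$, $X_{i}=\a\b(x_{i})$ for $i<n$, and $X_{n}=\b^{2}(x_{n})$, the entire right-hand side collapses to $\sum_{i=1}^{n}\mu(X_{1},\ldots,w\cdot X_{i},\ldots,X_{n})$, with the single factor $w$ multiplied uniformly into each slot.

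Next I would apply the transposed Poisson $n$-Lie compatibility of the untwisted algebra $(L,\cdot,\mu)$ to rewrite this sum as $n\,w\cdot\mu(X_{1},\ldots,X_{n})=n\,\a^{2}\b(u)\cdot\mu(\a\b(x_{1}),\ldots,\a\b(x_{n-1}),\b^{2}(x_{n}))$. Finally, expanding the left-hand side gives $n\,\a\b(u)\cdot'\mu'(x_{1},\ldots,x_{n})=n\,\a^{2}\b(u)\cdot\b\mu(\a(x_{1}),\ldots,\a(x_{n-1}),\b(x_{n}))$, and using that $\b$ commutes with $\mu$ and with $\a$ turns this into the same expression, so the two sides coincide.

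The one delicate point---bookkeeping rather than a real obstacle---is the threefold asymmetry: in $\cdot'$ the left factor carries $\a$ and the right factor $\b$; in $\mu'$ only the last argument carries $\b$; and in the axiom only the last summand uses $\a(u)$ in place of $\b(u)$. These three features are arranged precisely so that, after twisting, the factor $\a^{2}\b(u)$ is inserted into every slot in the same manner, and verifying that the powers of $\a$ and $\b$ agree in all $n$ summands is the step that must be carried out carefully.
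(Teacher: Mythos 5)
Your proposal is correct and follows essentially the same route as the paper: delegate the BiHom-commutative and $n$-BiHom-Lie structures to \cite[Claim 3.7]{GMMP} and the $n$-ary Yau-twist result in \cite{KMS} (the paper cites \cite[Theorem 5.2]{KMS} for this), then verify the compatibility identity of Definition \ref{de:7.18} by expanding each summand, observing that the factor $\a^{2}\b(u)$ enters every slot uniformly, and invoking the untwisted transposed Poisson $n$-Lie identity. The bookkeeping you flag as the delicate point is exactly the computation the paper carries out.
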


 \begin{proof} By \cite[Claim 3.7]{GMMP} and \cite[Theorem 5.2]{KMS}, we only need to check the equality below.  For $\forall u, x_{1}, x_{2}, \cdot\cdot\cdot, x_{n}\in L$,{\small
 \begin{eqnarray*}
 &&\hspace{-8mm}\mu'(\b(u)\cdot' x_{1},\b(x_{2}),\cdot\cdot\cdot,\b(x_{n-1}),\b(x_{n}))+\mu'(\b(x_{1}),\b(u)\cdot' x_{2},\cdot\cdot\cdot,\b(x_{n-1}),\b(x_{n}))\\
 &&+\cdot\cdot\cdot\\
 &&+\mu'(\b(x_{1}),\b(x_{2}),\cdot\cdot\cdot,\b(u)\cdot'x_{n-1},\b(x_{n}))
 +\mu'(\b(x_{1}),\b(x_{2}),\cdot\cdot\cdot,\b(x_{n-1}),\a(u)\cdot' x_{n})\\
 &=&\mu(\a^{2}\b(u)\cdot \a\b(x_{1}),\a\b(x_{2}),\cdot\cdot\cdot,\a\b(x_{n-1}),\b^{2}(x_{n})) +\mu(\a\b(x_{1}),\a^{2}\b(u)\cdot \a\b(x_{2}),\cdot\cdot\cdot,\a\b(x_{n-1}),\b^{2}(x_{n}))\\
 &&+\cdot\cdot\cdot\\
 &&+\mu(\a\b(x_{1}),\a\b(x_{2}),\cdot\cdot\cdot,\a^{2}\b(u)\cdot \a\b(x_{n-1}),\b^{2}(x_{n})) +\mu(\a\b(x_{1}),\a\b(x_{2}),\cdot\cdot\cdot,\a\b(x_{n-1}),\a^{2}\b(u)\cdot \b^{2}(x_{n}))\\
 &=&n\a^{2}\b(u)\cdot\mu(\a\b(x_{1}),\a\b(x_{2}),\cdot\cdot\cdot,\a\b(x_{n-1}),\b^{2}(x_{n}))\\
 &=&n\a\b(u)\cdot'\mu'(x_{1},x_{2},\cdot\cdot\cdot,x_{n}),
 \end{eqnarray*}}
 as desired.
 \end{proof}

\section{2-dimensional TBP algebras}\label{se:ex} In this section, we list some concrete examples for TBP algebras.

 \begin{ex}\label{ex:5.4} Let $L$ be a 2-dimensional vector space with respect to a basis $\{e_{1}, e_{2}\}$. Then $(L, \cdot, [,], \a, \b)$ is a TBP algebra, where
 \begin{itemize}
 \item {\bf Case I: Lie bracket is trivial.}
 \begin{eqnarray*}
 &(1)& e_{1}\cdot e_{2}=k_{2}e_{2}, \a(e_{1})=e_{2}, \a(e_{2})=0, \b(e_{1})=k_{1}e_{1}, \b(e_{2})=k_{2}e_{2}.\\
 &(2)& e_{1}\cdot e_{1}=k_{1}e_{1}, \a(e_{1})=e_{1}, \a(e_{2})=0, \b(e_{1})=k_{1}e_{1}, \b(e_{2})=k_{2}e_{2};\\
 &(3)& e_{1}\cdot e_{1}=k_{1}e_{1}+k_{2}e_{2},  e_{1}\cdot e_{2}=(k_{1}-k_{2})e_{2},\\
 &&\a(e_{1})=e_{1}+e_{2},\a(e_{2})=0, \b(e_{1})=k_{1}e_{1}+k_{2}e_{2}, \b(e_{2})=(k_{1}-k_{2})e_{2};\\
 &(4)& e_{2}\cdot e_{2}=k_{2}e_{2}, \a(e_{1})=0, \a(e_{2})=e_{2}, \b(e_{1})=k_{1}e_{1}, \b(e_{2})=k_{2}e_{2};\\
 &(5)& e_{1}\cdot e_{1}=k_{2}e_{2}, e_{1}\cdot e_{2}=(k_{1}+k_{2})e_{2}, e_{2}\cdot e_{1}=k_{2}e_{2}, e_{2}\cdot e_{2}=(k_{1}+k_{2})e_{2},\\
 && \a(e_{1})=e_{2}, \a(e_{2})=e_{2}, \b(e_{1})=k_{1}e_{1}+k_{2}e_{2}, \b(e_{2})=(k_{1}+k_{2})e_{2};\\
 &(6)&e_{1}\cdot e_{1}=k_{1}e_{1}, e_{1}\cdot e_{2}=k_{3}e_{1}, e_{2}\cdot e_{1}=k_{2}e_{2}, e_{2}\cdot e_{2}=k_{4}e_{2},\\
 &&\a(e_{1})=e_{1}, \a(e_{2})=e_{2}, \b(e_{1})=k_{1}e_{1}+k_{2}e_{2}, \b(e_{2})=k_{3}e_{1}+k_{4}e_{2}; \\
 &(7)&e_{1}\cdot e_{1}=k_{1}e_{1}+k_{2}e_{2}, e_{1}\cdot e_{2}=k_{1}e_{2}, e_{2}\cdot e_{1}=k_{1}e_{2}, e_{2}\cdot e_{2}=k_{1}e_{2},\\
 &&\a(e_{1})=e_{1}+e_{2}, \a(e_{2})=e_{2}, \b(e_{1})=k_{1}e_{1}+k_{2}e_{2}, \b(e_{2})=k_{1}e_{2};\\
 &(8)& e_{2}\cdot e_{1}=k_{1}e_{1}, \a(e_{1})=0, \a(e_{2})=e_{1}, \b(e_{1})=k_{1}e_{1}, \b(e_{2})=k_{2}e_{2};\\
 &(9)& e_{1}\cdot e_{1}=k_{2}e_{2}, e_{1}\cdot e_{2}=k_{1}e_{2}, e_{2}\cdot e_{1}=k_{1}e_{1}, e_{2}\cdot e_{2}=k_{2}e_{1},\\
 &&\a(e_{1})=e_{2}, \a(e_{2})=e_{1}, \b(e_{1})=k_{1}e_{1}+k_{2}e_{2}, \b(e_{2})=k_{2}e_{1}+k_{1}e_{2}; \\
 &(10)&e_{1}\cdot e_{1}=-(k_{1}+k_{2})e_{1}, e_{1}\cdot e_{2}=k_{1}e_{1}, e_{2}\cdot e_{1}=-(k_{1}+k_{2})e_{1}, e_{2}\cdot e_{2}=k_{1}e_{1},\\
 &&\a(e_{1})=e_{1}, \a(e_{2})=e_{1}, \b(e_{1})=-(k_{1}+k_{2})e_{1}, \b(e_{2})=k_{1}e_{1}+k_{2}e_{2};\\
 &(11)&e_{1}\cdot e_{1}=k_{1}e_{1}+k_{2}e_{2}, e_{1}\cdot e_{2}=k_{2}e_{1}+(k_{1}-k_{2})e_{2}, e_{2}\cdot e_{1}=k_{1}e_{1}, e_{2}\cdot e_{2}=k_{2}e_{1},\\
 &&\a(e_{1})=e_{1}+e_{2}, \a(e_{2})=e_{1}, \b(e_{1})=k_{1}e_{1}+k_{2}e_{2}, \b(e_{2})=k_{2}e_{1}+(k_{1}-k_{2})e_{2}; \\
 &(12)&e_{2}\cdot e_{1}=(k_{1}-k_{2})e_{1}, e_{2}\cdot e_{2}=k_{1}e_{1}+k_{2}e_{2},\\
 &&\a(e_{1})=0, \a(e_{2})=e_{1}+e_{2}, \b(e_{1})=(k_{2}-k_{1})e_{1}, \b(e_{2})=k_{1}e_{1}+k_{2}e_{2};\\
 &(13)& e_{1}\cdot e_{1}=(k_{1}+k_{2})e_{2}, e_{1}\cdot e_{2}=(k_{1}+k_{2})e_{2}, e_{2}\cdot e_{1}=k_{1}e_{1}+k_{2}e_{2},
 e_{2}\cdot e_{2}=k_{2}e_{1}+(k_{1}+k_{2})e_{2},\\
 &&\a(e_{1})=e_{2}, \a(e_{2})=e_{1}+e_{2}, \b(e_{1})=k_{1}e_{1}+k_{2}e_{2}, \b(e_{2})=k_{2}e_{1}+(k_{1}+k_{2})e_{2}; \\
 &(14)&e_{1}\cdot e_{1}=k_{1}e_{1}, e_{1}\cdot e_{2}=k_{2}e_{1}, e_{2}\cdot e_{1}=k_{1}e_{1}, e_{2}\cdot e_{2}=k_{2}e_{1}+k_{1}e_{2},\\
 &&\a(e_{1})=e_{1}, \a(e_{2})=e_{1}+e_{2}, \b(e_{1})=k_{1}e_{1}, \b(e_{2})=k_{2}e_{1}+k_{1}e_{2}; \\
 &(15)&e_{1}\cdot e_{1}=k_{1}e_{1}+k_{2}e_{2}, e_{1}\cdot e_{2}=k_{2}e_{1}+k_{1}e_{2}, e_{2}\cdot e_{1}=k_{1}e_{1}+k_{2}e_{2}, e_{2}\cdot e_{2}=k_{2}e_{1}+k_{1}e_{2},\\
 &&\a(e_{1})=e_{1}+e_{2}, \a(e_{2})=e_{1}+e_{2}, \b(e_{1})=k_{1}e_{1}+k_{2}e_{2}, \b(e_{2})=k_{2}e_{1}+k_{1}e_{2};\\
 &(16)&e_{1}\cdot e_{1}=e_{1}+k_{2}e_{2}, e_{1}\cdot e_{2}=k_{3}e_{2}, e_{2}\cdot e_{1}=k_{1}e_{2},\\
 &&\a(e_{1})=e_{1}, \a(e_{2})=k_{1}e_{2}, \b(e_{1})=e_{1}+k_{2}e_{2}, \b(e_{2})=k_{3}e_{2}\ \hbox{and}\ (k_{1}-1)k_{2}=0 \\
 &(17)&e_{1}\cdot e_{1}=e_{1}+(k_{2}+1)e_{2}, e_{1}\cdot e_{2}=k_{3}e_{2}, e_{2}\cdot e_{1}=k_{1}e_{2},\\
 &&\a(e_{1})=e_{1}+e_{2}, \a(e_{2})=k_{1}e_{2}, \b(e_{1})=e_{1}+k_{2}e_{2}, \b(e_{2})=k_{2}e_{2}\ \hbox{and}\  1+k_{3}+(k_{1}-1)k_{2}=0 \\
 &(18)& e_{1}\cdot e_{1}=e_{1}, e_{1}\cdot e_{2}=k_{1}e_{2}, \a(e_{1})=e_{1}, \a(e_{2})=0, \b(e_{1})=e_{1}, \b(e_{2})=k_{1}e_{2}; \\
 &(19)& e_{1}\cdot e_{1}=e_{1}+(k_{1}+1)e_{2}, e_{1}\cdot e_{2}=(k_{1}+1)e_{2}, \\
 &&\a(e_{1})=e_{1}+e_{2}, \a(e_{2})=0, \b(e_{1})=e_{1}+k_{1}e_{2}, \b(e_{2})=(k_{1}+1)e_{2}; \\
 &(20)& e_{1}\cdot e_{1}=k_{1}k_{3}e_{2}, \a(e_{1})=k_{1}e_{1}+k_{2}e_{2}, \a(e_{2})=k{_{1}}^{2}e_{2}, \b(e_{1})=k_{3}e_{1}+k_{4}e_{2}, \b(e_{2})=k{_{3}}^{2}e_{2}\\
 &&\ \hbox{and}\ k_{2}(k_{3}-k{_{3}}^{2})+k_{4}(k{_{1}}^{2}-k_{1})=0.
 \end{eqnarray*}
 \item {\bf Case II: The BiHom-associative product is trivial.}
 \begin{eqnarray*}
 &(21)& [e_{1},e_{1}]=(-k_{1}+k_{3})e_{2}, [e_{1},e_{2}]=k_{4}e_{2}, [e_{2},e_{1}]=-k_{2}e_{2},\\
 &&\hspace{-5mm}\a(e_{1})=e_{1}+k_{1}e_{2}, \a(e_{2})=k_{2}e_{2}, \b(e_{1})=e_{1}+k_{3}e_{2}, \b(e_{2})=k_{4}e_{2}\ \hbox{and}\ k_{1}(k_{3}-k_{4})+k_{3}(k_{2}-1)=0; \\
 &(22)& [e_{1},e_{1}]=(k_{4}-k_{1}k_{3})e_{2}, [e_{2},e_{1}]=-k_{2}k_{3}e_{2},\\
 &&\a(e_{1})=e_{1}+k_{1}e_{2}, \a(e_{2})=k_{2}e_{2}, \b(e_{1})=k_{3}e_{1}+k_{4}e_{2}, \b(e_{2})=0\  \hbox{and}\ k_{1}k_{3}=k_{4}(1-k_{2}); \\
 &(23)&[e_{1},e_{1}]=(k_{1}k_{3}-k_{2})e_{2}, [e_{1},e_{2}]=k_{1}k_{4}e_{2}, \\ &&\a(e_{1})=k_{1}e_{1}+k_{2}e_{2}, \a(e_{2})=0, \b(e_{1})=e_{1}+k_{3}e_{2}, \b(e_{2})=k_{4}e_{2}\  \hbox{and}\ k_{2}(1-k_{4})=k_{3}k_{1}\\
 &(24)&[e_{1},e_{1}]=k_{1}e_{2}, \a(e_{1})=e_{1}+k_{2}e_{2}, \a(e_{2})=e_{2}, \b(e_{1})=k_{1}e_{2}, \b(e_{2})=0;\\
 &(25)&[e_{1},e_{1}]=-k_{1}e_{2}, \a(e_{1})=k_{1}e_{2}, \a(e_{2})=0, \b(e_{1})=e_{1}+k_{2}e_{2}, \b(e_{2})=e_{2}.
 \end{eqnarray*}
 \item {\bf Case III}
 \begin{eqnarray*}
 &\hspace{-45mm}(26)&e_{1}\cdot e_{1}=e_{2}, [e_{1},e_{1}]=(k_{1}-k_{2})e_{2}, [e_{1},e_{2}]=e_{2},\\
 &&\a(e_{1})=e_{1}+k_{2}e_{2}, \a(e_{2})=e_{2}, \b(e_{1})=e_{1}+k_{1}e_{2}, \b(e_{2})=e_{2},
 \end{eqnarray*}
 where $k_{i}, i=1,2,3,4$ are parameters in $\mathrm{K}$.
 \end{itemize}
 \end{ex}

 \section*{Acknowledgment} This work is supported by Natural Science Foundation of Henan Province (No. 212300410365).

 \end{document}